\theoremstyle{plain}
\newtheorem{thm}{Theorem}[section]
\newtheorem{lem}[thm]{Lemma}
\newtheorem{cor}[thm]{Corollary}
\newtheorem{prop}[thm]{Proposition}
\theoremstyle{definition}
\newtheorem{defn}[thm]{Definition}
\theoremstyle{remark}
\newtheorem*{remark*}{Remark}
\numberwithin{equation}{section}
\newcommand{\cF}{{\mathcal{F}}}
\newcommand{\cH}{{\mathcal{H}}}
\newcommand{\cL}{{\mathcal{L}}}
\newcommand{\cO}{{\mathcal{O}}}
\newcommand{\cP}{{\mathcal{P}}}
\newcommand{\cT}{{\mathcal{T}}}
        \newcommand{\field}[1]{{\mathbb{#1}}}
        \newcommand{\NN}{\field{N}}
        \newcommand{\ZZ}{\field{Z}}
        \newcommand{\RR}{\field{R}}
        \newcommand{\CC}{\field{C}}
\newcommand{\tr}{\mbox{\rm tr}}
\begin{document}

\title[Semiclassical spectral analysis of Toeplitz operators]{Semiclassical spectral analysis of Toeplitz operators on symplectic manifolds: the case of discrete wells}

\author[Y. A. Kordyukov]{Yuri A. Kordyukov}
\address{Institute of Mathematics with Computing Centre\\
Ufa Federal Research Centre of 
         Russian Academy of Sciences\\
         112~Chernyshevsky str.\\ 450008 Ufa\\ Russia\\
ORCID: 0000-0003-2957-2873} \email{yurikor@matem.anrb.ru}

\thanks{Supported by the Russian Science Foundation,
project no. 17-11-01004.}

\subjclass[2000]{Primary 58J50; Secondary 53D50, 58J37}

\keywords{Bochner Laplacian, symplectic manifolds, semiclassical analysis, Berezin-Toeplitz quantization, eigenvalue asymptotics}

\begin{abstract}
We consider Toeplitz operators associated with the renormalized Bochner Laplacian on high tensor powers of a positive line bundle on a compact symplectic manifold. We study the asymptotic behavior, in the semiclassical limit, of low-lying eigenvalues and the corresponding eigensections of a self-adjoint Toeplitz operator under assumption that its principal symbol has a non-degenerate minimum with discrete wells. As an application, we prove upper bounds for low-lying eigenvalues of the Bochner Laplacian in the semiclassical limit.   
\end{abstract}

\date{}

 \maketitle


\section{Preliminaries and main results}
\subsection{Introduction}
The Berezin-Toeplitz operator quantization is a quantization method for a compact quantizable symplectic manifold, which is a particularly effective version of geometric quantization. It was Berezin who recognized the importance of Toeplitz operators for quantization of K\"ahler manifolds in his pioneering work \cite{Berezin}. There are several approaches to Berezin-Toeplitz and geometric quantization (see, for instance, survey papers \cite{Ali,Englis,ma:ICMtalk,Schlich10}). For a general compact K\"ahler manifold, the Berezin-Toeplitz quantization was constructed by 
Bordemann-Meinrenken-Schli\-chen\-maier \cite{BMS94}, using the theory of Toeplitz structures of Boutet de Monvel and Guillemin \cite{BG}. In this case, the quantum space is the space of holomorphic sections of tensor powers of the prequantum line bundle over the K\"ahler manifold. In order to generalize the Berezin-Toeplitz quantization to arbitrary symplectic manifolds, one has to find a substitute for this quantum space. A natural candidate suggested by Guillemin and Vergne is the kernel of the spin$^c$ Dirac operator. The Berezin-Toeplitz quantization with such a quantum space was developed by Ma-Marinescu \cite{ma-ma:book,ma-ma08}. It is based on the asymptotic expansion of the Bergman kernel outside the diagonal obtained by Dai-Liu-Ma \cite{dai-liu-ma}. Another candidate suggested by Guillemin-Uribe \cite{Gu-Uribe} is the space of eigensections of the renormalized Bochner Laplacian corresponding to eigenvalues localized near the origin. In this case, the Berezin-Toeplitz quantization was recently constructed in \cite{ioos-lu-ma-ma,bergman}, based on Ma-Marinescu work: the Bergman kernel expansion from \cite{ma-ma08a} and Toeplitz calculus developed in \cite{ma-ma08} for spin$^c$ Dirac operator and K\"ahler case (also with an auxiliary bundle). We note also that Charles \cite{charles} proposed recently another approach to quantization of symplectic manifolds and Hsiao-Marinescu \cite{HM} constructed a Berezin-Toeplitz quantization for eigensections of small eigenvalues in the case of complex manifolds.

The simplest example of the Berezin-Toeplitz quantization is given by the Toeplitz operators on the Fock space. This quantization is related with the standard pseudodifferential calculus in the Euclidean space via the Bargmann transform. Therefore, for a general quantizable symplectic manifold, the Berezin-Toeplitz quantization can be considered as a kind of operator calculus on the manifold, similar to the semiclassical pseudodifferential calculus in the Euclidean space. Several basic notions and results of the semiclassical pseudodifferential calculus in the Euclidean space were extended to Berezin-Toeplitz operators on K\"ahler manifolds by Charles \cite{charles03}. 

In this paper, we are interested in asymptotic spectral properties of self-adjoint Toeplitz operators in semiclassical limit. Several aspects of the semiclassical spectral analysis were studied for Toeplitz operators on compact K\"ahler manifolds: for instance, quantum ergodicity \cite{Zelditch97}, the Gutzwiller trace formula \cite{BPU}, the Bohr-Sommerfeld conditions (both regular \cite{charles03a,charles06} and singular \cite{LeFloch1,LeFloch2}). We consider Toeplitz operators associated with the renormalized Bochner Laplacian on an arbitrary quantizable compact symplectic manifold. We assume that the principal symbol of a self-adjoint Toeplitz operator has a non-degenerate minimum with discrete wells and study the asymptotic behavior, in the semiclassical limit, of its eigenvalues at the bottom of the spectrum (low-lying eigenvalues) and of the corresponding eigensections. For semiclassical pseudodifferential operators (in particular, for the Schr\"odinger operator $-h^2\Delta+V$ in $\mathbb R^n$ with potential wells), the study of similar problems goes back to the papers of Helffer-Sj\"ostrand \cite{HelSj1}, Helffer-Robert \cite{HelRo} and Simon \cite{Simon}. For compact K\"ahler manifolds, such problems were recently studied by Deleporte in \cite{deleporte1,deleporte2}. For compact K\"ahler surfaces, a full asymptotic expansion for the first eigenvalues, valid on a fixed interval, was obtained in \cite{LeFloch1}.

As an immediate application of our results, we obtain asymptotic upper bounds, in the semiclassical limit, for low-lying eigenvalues of the Bochner Laplacian with discrete wells, thus extending to the case of arbitrary even dimension the results of \cite{HM01,Helffer-K11} on eigenvalue asymptotics for the two-dimen\-sional magnetic Schr\"odinger operator with non-vanishing magnetic field and discrete wells. 

\subsection{Preliminaries on Toeplitz operators}
 Let $(X,\omega)$ be a compact symplectic manifold of dimension $2n$ and $(L,h^L)$ be a Hermitian line bundle on $X$ with a Hermitian connection 
\[
\nabla^L : C^\infty(X,L)\to C^\infty(X,T^*X\otimes L).
\]
 The curvature of this connection is given by $R^L=(\nabla^L)^2$. 
We will assume that $L$ satisfies the pre-quantization condition:
\begin{equation}\label{e:def-omega}
\frac{i}{2\pi}R^L=\omega. 
\end{equation}
Thus, $[\omega]\in H^2(X,\mathbb Z)$. 

Let $g$ be a Riemannian metric on $X$. Let $J_0 : TX\to TX$ be a skew-adjoint operator such that 
\[
\omega(u,v)=g(J_0u,v), \quad u,v\in TX. 
\]
Consider the operator $J : TX\to TX$ given by 
\begin{equation}\label{e:defJ}
J=J_0(-J^2_0)^{-1/2}.
\end{equation} 
Then $J$ is an almost complex structure compatible with $\omega$ and $g$, that is, $g(Ju,Jv)=g(u,v), \omega(Ju,Jv)=\omega(u,v)$ and $\omega(u,Ju)\geq 0$ for any $u,v\in TX$.  
  
For any $p\in \NN$, let $L^p:=L^{\otimes p}$ be the $p$-th tensor power of $L$.  Let $\nabla^{L^p}: {C}^\infty(X,L^p)\to
{C}^\infty(X, T^*X \otimes L^p)$ be the connection
on $L^p$ induced by $\nabla^{L}$.
Denote by $\Delta^{L^p}$ the induced
Bochner Laplacian acting on $C^\infty(X,L^p)$ by
\begin{equation}\label{e:DeltaLp}
\Delta^{L^p}=\big(\nabla^{L^p}\big)^{\!*}\,
\nabla^{L^p},
\end{equation}
where $\big(\nabla^{L^p}\big)^{\!*}:
{C}^\infty(X,T^*X\otimes L^p)\to
{C}^\infty(X,L^p)$ denotes the formal adjoint of 
the operator $\nabla^{L^p}$.  If $\{e_j\}_{j=1,\ldots,2n}$ is a local orthonormal frame of $TX$, then $\Delta^{L^p}$ is given by
\begin{equation}\label{e:DeltaLp1}
\Delta^{L^p}=-\sum_{j=1}^{2n}\left[(\nabla^{L^p}_{e_j})^2-\nabla^{L^p}_{\nabla^{TX}_{e_j}e_j} \right],
\end{equation}
where $\nabla^{TX}$ is the Levi-Civita connection of the metric $g$. 

The renormalized Bochner Laplacian $\Delta_p$ is a second order differential operator acting on $C^\infty(X,L^p)$ by
 \[
\Delta_p=\Delta^{L^p}-p\tau,
 \]
where $\tau$ is a smooth function on $X$ given by 
 \begin{equation}\label{e:def-tau}
 \tau(x)=-\pi \operatorname{Tr}[J_0(x)J(x)],\quad x\in X.
 \end{equation}
This operator was introduced by Guillemin-Uribe in \cite{Gu-Uribe}. When $(X,\omega)$ is a K\"ahler manifold, it is twice the corresponding Kodaira Laplacian on functions $\Box^{L^p}=\bar\partial^{L^p*}\bar\partial^{L^p}$.
 
Denote by $\sigma(\Delta_p)$ the spectrum of $\Delta_p$ in $L^2(X,L^p)$. Put
  \begin{equation}\label{e:def-mu0}
 \mu_0=\inf_{u\in T_xX, x\in X}\frac{iR^L_x(u,J(x)u)}{|u|_g^2}.
 \end{equation}
By \cite[Cor. 1.2]{ma-ma02}, there exists a constant $C_L>0$ such that for any $p$
 \[
 \sigma(\Delta_p)\subset [-C_L,C_L]\cup [2p\mu_0-C_L,+\infty).
 \]
Consider the finite-dimensional vector subspace $\mathcal H_p\subset L^2(X,L^p)$ spanned by the eigensections of $\Delta_p$ corresponding to eigenvalues in $[-C_L,C_L]$. By \cite[Cor. 1.2]{ma-ma02} (also cf. \cite{B-Uribe,Gu-Uribe} in the case $J_0 = J$), its dimension is given, for $p$ large enough, by the Riemann-Roch-Hirzebruch formula
\[
d_p:=\dim \mathcal H_p=\langle \operatorname{ch}(L^p)\operatorname{Td}(TX), [X]\rangle.
\]
Here $\operatorname{ch}(L^p)$ is the Chern character of $L^p$ and $\operatorname{Td}(TX)$ is the Todd class of the tangent bundle $TX$ considered as a complex vector bundle with complex structure $J$. In particular, we have $d_p\sim p^n\int_X\frac{\omega^n}{n!}$ as $p\to \infty$.

Let $P_{\mathcal H_p}$ be the orthogonal projection from $L^2(X,L^p)$ onto $\mathcal H_p$. 

\begin{defn}
A Toeplitz operator is a sequence $\{T_p\}=\{T_p\}_{p\in \mathbb N}$ of bounded linear operators $T_p : L^2(X,L^p)\to L^2(X,L^p)$, satisfying the following conditions:
\begin{description}
\item[(i)] For any $p\in \mathbb N$, we have 
\[
T_p=P_{\mathcal H_p}T_pP_{\mathcal H_p}. 
\]
\item[(ii)] There exists a sequence $g_l\in C^\infty(X)$ such that 
\begin{equation}\label{e:Tii}
T_p=P_{\mathcal H_p}\left(\sum_{l=0}^\infty p^{-l}g_l\right)P_{\mathcal H_p}+\mathcal O(p^{-\infty}),
\end{equation}
i.e. for any natural $k$ there exists $C_k>0$ such that, for any $p\in \mathbb N$, 
\[
\left\|T_p-P_{\mathcal H_p}\left(\sum_{l=0}^k p^{-l}g_l\right)P_{\mathcal H_p}\right\|\leq C_kp^{-k-1}.
\]
\end{description}
  \end{defn}
 
Here we follow the approach to Toeplitz operator calculus introduced in \cite{ma-ma:book,ma-ma08} for spin$^c$ Dirac operator and K\"ahler case (also with an auxiliary bundle) and extended to the case under consideration in \cite{ioos-lu-ma-ma,bergman}.
 
The full symbol of $\{T_p\}$ is the formal series $\sum_{l=0}^\infty \hbar^lg_l\in C^\infty(X)[[\hbar ]]$ and the principal symbol of $\{T_p\}$ is $g_0$.  In the particular case when $g_l=0$ for $l\geq 1$ and $g_0=f$, we get the operator $T_{f,p}=P_{\mathcal H_p}f P_{\mathcal H_p}: L^2(X,L^p)\to L^2(X,L^p)$. 

If each $T_p$ is self-adjoint, then $\{T_p\}$ is called self-adjoint.
For real-valued $f$, the operator $T_{f,p}$ is obviously self-adjoint. Conversely, if $\{T_p\}$ is self-adjoint, then its principal symbol is real-valued. 

As shown in \cite{ioos-lu-ma-ma,bergman}, the set of Toeplitz operators forms an algebra. This is based on the Bergman kernel expansion from \cite{ma-ma08a} and Toeplitz operator calculus developed in \cite{ma-ma08} for spin$^c$ Dirac operator and K\"ahler case (also with an auxiliary bundle).

\begin{thm}\label{t:algebra}
Let $f,g\in C^\infty(X)$. Then the product of the Toeplitz operators $T_{f,p}$ and $T_{g,p}$ is a Toeplitz operator. More precisely, it admits the asymptotic expansion 
\begin{equation}\label{e:compT}
T_{f,p}T_{g,p}=\sum_{r=0}^\infty p^{-r}T_{C_r(f,g),p}+\mathcal O(p^{-\infty}), 
\end{equation}
with some $C_r(f,g)\in C^\infty(X)$, where $C_r$ are bidifferential operators. In particular, 
\[
C_0(f,g)=fg, \quad C_1(f,g)-C_1(g,f)=i\{f,g\},
\]
where $\{f,g\}$ is the Poisson bracket on $(X,2\pi \omega)$.
\end{thm}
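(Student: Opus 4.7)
The plan is to compute the Schwartz kernel of the composition $T_{f,p}T_{g,p}=P_{\cH_p} f P_{\cH_p} g P_{\cH_p}$ and to identify it, order by order in $p^{-1}$, as the kernel of a Toeplitz operator. The main technical input is the full off-diagonal asymptotic expansion of the Bergman kernel $P_p(x,y)$ of $P_{\cH_p}$ established in \cite{ma-ma08a}, together with its off-diagonal decay: outside any fixed neighbourhood of the diagonal, $P_p$ is $\cO(p^{-\infty})$ in any $C^k$-norm. This decay immediately reduces the study of
\[
(T_{f,p}T_{g,p})(x,w)=\int_{X\times X}P_p(x,y)f(y)P_p(y,z)g(z)P_p(z,w)\,dy\,dz
\]
to integration over a small neighbourhood of any reference point $x_0\in X$, modulo $\cO(p^{-\infty})$.

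I would then fix $x_0\in X$, choose normal coordinates and a unitary trivialization of $L$ adapted to the almost complex structure $J$, and pull the problem back to a model on $T_{x_0}X\cong\RR^{2n}$. In these coordinates the Bergman kernel has an expansion of the form
\[
P_p(x,y)=p^{n}\sum_{r=0}^{N}p^{-r/2}\cF_r(\sqrt{p}\,x,\sqrt{p}\,y)\,e^{-p\phi(x,y)}+\cO(p^{-(N+1)/2}),
\]
where each $\cF_r$ is polynomial on the rescaled fibres and $\phi$ is the model phase built from $R^L_{x_0}$. After rescaling $y,z$ to variables of order $p^{-1/2}$ around $x_0$ and Taylor expanding $f$ and $g$ at $x_0$, the kernel of $T_{f,p}T_{g,p}$ acquires an asymptotic expansion whose coefficients are Gaussian-type integrals of polynomials in the derivatives of $f$ and $g$. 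These coefficients define the bidifferential operators $C_r(f,g)$ that realize \eqref{e:compT}.

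The identification $C_0(f,g)=fg$ follows by keeping only the leading term and using the reproducing property $\int P_p(x,y)P_p(y,z)\,dy=P_p(x,z)$. To extract $C_1(f,g)-C_1(g,f)=i\{f,g\}$, I would Taylor expand $f(y)=f(x_0)+\langle df(x_0),y-x_0\rangle+\cO(|y-x_0|^2)$ and similarly for $g$, collect the $p^{-1}$ contribution to the rescaled Gaussian integral, and observe that the antisymmetric part reduces to the contraction of $df\otimes dg$ with the symplectic bivector dual to $2\pi\omega$; the factor $2\pi$ is forced by the prequantization normalization $\frac{i}{2\pi}R^L=\omega$.

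The main obstacle is the bookkeeping needed to verify that the full asymptotic expansion obtained in this way is in fact of Toeplitz type, meaning that for each $r$ the coefficient can be written as $P_{\cH_p}C_r(f,g)P_{\cH_p}$ up to contributions absorbed at the next order, rather than merely as some finite-rank operator sandwiched by $P_{\cH_p}$. This self-reproducing property of the Bergman kernel expansion is the technical heart of the Toeplitz calculus of \cite{ma-ma08}, adapted to the renormalized Bochner Laplacian setting in \cite{ioos-lu-ma-ma,bergman}: one proves by induction on $r$ that any residual kernel of the form (rescaled polynomial)$\times$(Gaussian) is, modulo lower-order corrections, the kernel of a Toeplitz operator whose smooth symbol can be read off explicitly, thereby closing the recursion that defines $C_r(f,g)$.
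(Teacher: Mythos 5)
The paper does not prove this theorem: it is recorded as a result imported from \cite{ioos-lu-ma-ma,bergman} (themselves building on \cite{ma-ma08a,ma-ma08}), so there is no paper-internal proof to compare against. Your sketch is a faithful high-level description of the strategy used in those references: reduce to a neighbourhood of the diagonal using the $\cO(p^{-\infty})$ off-diagonal decay of $P_{\cH_p}$, pass to rescaled normal coordinates and the model Bergman kernel $\mathcal P_{x_0}$, Taylor-expand $f$ and $g$, and recognize the resulting polynomial-times-Gaussian coefficients as those of a Toeplitz operator. The step you single out as the main obstacle, namely verifying that the remainder after extracting each $T_{C_r(f,g),p}$ is again of Toeplitz type so that the recursion closes, is precisely the content of the kernel characterization of Toeplitz operators that the paper records later as Theorem~\ref{t:char} (the analogue of \cite[Theorem 4.9]{ma-ma08}); your sketch invokes it but does not reprove it, which is reasonable since it is a theorem in its own right. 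The leading-order identification $C_0(f,g)=fg$ via the projection property and the extraction of the Poisson bracket $i\{f,g\}$ from the antisymmetric part of the $p^{-1}$ term, including the $2\pi$ normalization coming from the prequantization condition, are both correct. In short, your outline is correct and mirrors the approach of the cited references that the paper relies on; the only caveat is that it is an outline, and completing it amounts to reproducing those references rather than a shorter or alternative argument.
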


Thus, the Toeplitz operators provide a Berezin-Toeplitz quantization for the compact symplectic manifold $(X,2\pi\omega)$. The limit $p\to +\infty$ for Toeplitz operators can be thought of as a semiclassical limit, with semiclassical parameter $\hbar=\frac{1}{p}\to 0$. Theorem~\ref{t:algebra} shows that this quantization has a correct semiclassical limit. 

Throughout in the paper, we will consider a self-adjoint Toeplitz operator $T_p$ with principal symbol $h$:
\begin{equation}\label{e:Tp}
T_p=P_{\mathcal H_p}\left(\sum_{l=0}^\infty p^{-l}g_l\right)P_{\mathcal H_p}+\mathcal O(p^{-\infty}), \quad g_0=h. 
\end{equation}
Without loss of generality, we will assume that the principal symbol $h$ satisfies the condition:
\begin{equation}\label{e:minh=0}
\min_{x\in X}h(x)=0.
\end{equation}

We will suppose that the operator $T_{p}$ acts on the finite-dimensional space $\cH_p$. Thus, the spectrum of $T_p$ consists of a finite number of eigenvalues
$\lambda^0_p\leq \lambda^1_p\leq \ldots \leq \lambda^{d_p-1}_p$. 
We will study the asymptotic properties of eigenvalues and eigensections of $T_p$ in the semiclassical limit $p\to \infty$. 

\subsection{Asymptotic decay of eigensections}
First,  we study localization properties of eigensections of Toeplitz operators. We start with a result on exponential decay of the eigensections of $T_p$ in the classically forbidden region. For any $h_0\geq 0$, denote 
\[
U_{h_0}=\{x\in X : h(x)\leq h_0\}.
\]

\begin{thm}\label{t:forbidden}
Let $\{T_p\}$ be a self-adjoint Toeplitz operator \eqref{e:Tp}, satisfying the condition \eqref{e:minh=0}. Assume that there exist $C>0$ and $a>0$ such that for any $p\in \NN$ and $(x,y)\in X\times X$, 
\begin{equation}\label{e:expT-h}
|(T_p-P_{\mathcal H_p}hP_{\mathcal H_p})(x,y)|<Cp^{-1} e^{-a\sqrt{p}d(x,y)},
\end{equation}
where $(T_p-P_{\mathcal H_p}hP_{\mathcal H_p})(x,y)$ denotes the Schwartz kernel of the operator $T_p-P_{\mathcal H_p}hP_{\mathcal H_p}$ with respect to the Riemannian volume form $dv_X$ on $X$ and $d$ is the geodesic distance function of $(X,g)$. 
 
Suppose that, for any $p\in \NN$. $\lambda_p$ is an eigenvalue of $T_{p}$ such that $\lambda_p \leq h_1$ with some $h_1>0$ independent of $p$ and $u_p\in \mathcal H_p$ is the corresponding normalized eigensection:
\[
T_{p}u_p=\lambda_pu_p, \quad \|u_p\|=1.
\]
Then, for any $h_0>h_1$, there exist $\alpha>0$ and $C_1>0$ such that, for any $p\in \NN$,
\[
\int_X e^{2\alpha\sqrt{p}d(x,U_{h_0})}|u_p(x)|^2 dv_X(x)<C_1, 
\]
where $d(x,U_{h_0})$ denotes the distance from $x$ to $U_{h_0}$.  
\end{thm}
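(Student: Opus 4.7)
The plan is to run a standard Agmon-type argument, adapted to the non-local setting of the Bergman projection. Set $w(x) = \exp(\alpha\sqrt{p}\,\rho(x))$ where $\rho(x) = d(x, U_{h_0})$ and $\alpha > 0$ is to be chosen small. Write $P = P_{\mathcal H_p}$ and $R_p = T_p - P h P$, so the hypothesis reads $|R_p(x,y)| \leq C p^{-1} e^{-a\sqrt{p}\,d(x,y)}$. The goal is to show $\|w u_p\|^2 \leq C_1$ independently of $p$, which is equivalent to the stated exponential decay bound.

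Taking the inner product of $T_p u_p = \lambda_p u_p$ against $w^2 u_p$ and using $P u_p = u_p$, I first obtain the identity
\[
\int_X (h - \lambda_p) w^2 |u_p|^2 \, dv_X = \langle (I - P)(h u_p), w^2 u_p \rangle - \langle R_p u_p, w^2 u_p \rangle.
\]
On $U_{h_0}$ one has $w \equiv 1$ and $|h - \lambda_p| \leq h_1$, while on $X \setminus U_{h_0}$ one has $h - \lambda_p \geq h_0 - h_1 > 0$. Hence the left-hand side is at least $(h_0 - h_1) \int_{X \setminus U_{h_0}} w^2 |u_p|^2 - h_1$. It therefore suffices to bound the right-hand side by $Cp^{-1/2}\|w u_p\|^2$; using $\|w u_p\|^2 \leq 1 + \int_{X \setminus U_{h_0}} w^2 |u_p|^2$, for $p$ large this term may be absorbed into the left-hand side to conclude.

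The crux is the commutator estimate. Using $u_p = P u_p$, one has the pointwise ``smoothing'' formula
\[
(I - P)(h u_p)(x) = \int_X P(x, y) \bigl( h(x) - h(y) \bigr) u_p(y) \, dv_X(y).
\]
Combining this with the Ma--Marinescu off-diagonal Bergman kernel bound $|P(x,y)| \leq C p^n e^{-c\sqrt{p}\,d(x,y)}$, the Lipschitz bound $|h(x) - h(y)| \leq C d(x,y)$, and the elementary gain $d(x,y) e^{-c\sqrt{p}\,d(x,y)} \leq C p^{-1/2} e^{-c'\sqrt{p}\,d(x,y)}$, and then writing $w(x)^2 = w(x) w(y) \cdot (w(x)/w(y))$ with the Lipschitz bound $w(x)/w(y) \leq e^{\alpha\sqrt{p}\,d(x,y)}$, a Schur test applied to the symmetric kernel $K(x,y) = e^{(\alpha - c')\sqrt{p}\,d(x,y)}$ (whose Schur norm is $O(p^{-n})$ when $\alpha < c'$) yields
\[
|\langle (I - P)(h u_p), w^2 u_p \rangle| \leq C p^{-1/2} \|w u_p\|^2,
\]
provided $\alpha$ is chosen smaller than $c$. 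The same Schur argument applied to the second term, using the hypothesis on $R_p$, produces the even smaller bound $C p^{-n-1} \|w u_p\|^2$ when $\alpha < a$.

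The main obstacle is precisely the non-commutativity of $P_{\mathcal H_p}$ with multiplication operators: in contrast with differential operators, there is no pointwise ``kinetic plus potential'' splitting of $T_p$ and the Agmon weight $w$ has no clean conjugation with $T_p$. The decisive input that makes the argument work is the off-diagonal Bergman kernel decay at the semiclassical length scale $1/\sqrt{p}$, which both matches the natural scale $\alpha\sqrt{p}\,\rho(x)$ of the Agmon weight and supplies the extra $p^{-1/2}$ gain from Lipschitz continuity of $h$ needed to close the estimate. Accordingly, $\alpha$ must be taken strictly smaller than both $a$ and the Bergman decay rate $c$.
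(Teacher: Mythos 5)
Your proof is correct, and it takes a genuinely different route from the paper's. The paper's argument never works with the Bergman kernel $P_{\mathcal H_p}(x,y)$ directly: it first smooths the Lipschitz weight $\rho$ to $\tilde\rho_p$ (so that $e^{\alpha\sqrt{p}\tilde\rho_p}$ may be conjugated with $\Delta_p$), represents $P_{\mathcal H_p}$ as a Cauchy integral $\frac{1}{2\pi i}\int_\delta(\lambda-\tfrac1p\Delta_p)^{-1}d\lambda$, and then feeds weighted resolvent estimates (uniformly in $\lambda\in\delta$ and $p$) into Lemmas~\ref{l:1} and \ref{l:2} to control the commutators $[P_{\mathcal H_p},f_{2\alpha,p}]$ and $[h,P_{\mathcal H_p}]$ --- culminating in the general weighted estimate of Proposition~\ref{p:weighted-est}, which is then also reused for Theorem~\ref{low-l}. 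You instead work pointwise: the identity $(I-P_{\mathcal H_p})(hu_p)(x)=\int P_{\mathcal H_p}(x,y)\,(h(x)-h(y))\,u_p(y)\,dv_X(y)$ (which is precisely $[h,P_{\mathcal H_p}]u_p$, the object of Lemma~\ref{l:2}) combined with the off-diagonal Bergman kernel bound $|P_{\mathcal H_p}(x,y)|\leq Cp^n e^{-c\sqrt p\,d(x,y)}$ and a Schur test gives the same $O(p^{-1/2})$ gain, and the same Schur argument disposes of the $R_p$ term. What your route buys: you never differentiate the weight, so no smoothing $\tilde\rho_p$ or resolvent machinery is needed, making the argument self-contained once the standard off-diagonal Bergman kernel estimate is taken as input (it is, though this should be cited explicitly, e.g. from \cite{Ko-ma-ma} or \cite{ma-ma08a}); you also obtain the coefficient $1$ on $\lambda_p$ rather than $1+C|\alpha|$. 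What the paper's route buys: the resolvent-based weighted estimates are a reusable tool giving the stronger Proposition~\ref{p:weighted-est} uniformly in $\alpha$, which is needed for the scale-varying argument ($\alpha=cp^{-(2k-1)/(4k+2)}$) in the proof of Theorem~\ref{low-l}, where one cannot simply fix $\alpha$ small.
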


The proof uses an approach to the study of Bergman kernels, developed in \cite{bergman,Ko-ma-ma}. It combines the methods of \cite{dai-liu-ma,ma-ma:book,ma-ma08a,ma-ma08} and weighted estimates with appropriate exponential weights as in \cite{Kor91}.  

Next, we consider the eigensections, corresponding to low-lying eigenvalues of a Toeplitz operator, under the assumption that the minimum of its principal symbol is non-degenerate. One can use the methods of the proof of Theorem~\ref{t:forbidden} in this setting and get some exponential decay estimates (see Theorem~\ref{low-l}). This result turns out to be insufficient for our applications. So instead we extend a general localization result proved by  Deleporte \cite{deleporte1} in the case when $X$ is a K\"ahler manifold.   

So let $\{T_p\}$ be a self-adjoint Toeplitz operator \eqref{e:Tp}, satisfying the condition \eqref{e:minh=0}. We assume that $h$ is non-degenerate on the zero set $U_0=\{x\in X : h(x)=0\}$: there exists $\alpha>0$ such that
\begin{equation}\label{e:nondeg0}
h(x)\geq \alpha d(x,U_0)^{2}, \quad x\in X.
\end{equation}
For any $\delta>0$, put 
\[
V_\delta=\{x\in X : d(x, U_0)<\delta\}.
\]

\begin{thm}\label{low-l0}
For any $c>0$, $k\in \NN$ and $\delta>0$, there exists $C>0$ such that for any $p\in \NN$ and $u_p\in \mathcal H_p$ such that  
\begin{equation}\label{e:Tp1}
T_{p}u_p=\lambda_pu_p, \quad \|u_p\|=1.
\end{equation}
with $\lambda_p$, satisfying the estimate 
\begin{equation}\label{e:lp1}
\lambda_p<\frac{c}{p}, 
\end{equation}
we have
\begin{equation}\label{e:minusVdelta}
\int_{X\setminus V_\delta} |u_p(x)|^2 dv_X(x)<Cp^{-k}, \quad p\in \NN.
\end{equation}
\end{thm}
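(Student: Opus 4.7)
The plan is to combine the non-degeneracy hypothesis \eqref{e:nondeg0} with functional calculus for Toeplitz operators. Since $\lambda_p<c/p$, any eigensection $u_p$ satisfies $\phi(T_p)u_p=u_p$ for a smooth cutoff $\phi$ supported in a small neighborhood of $0$, and my goal is to realize $\phi(T_p)$ as a Toeplitz operator whose full symbol is supported inside $V_{\delta/2}$. Composing with a Toeplitz operator whose symbol lives in $X\setminus V_{\delta/2}$ then produces $O(p^{-\infty})$ by the algebra property of Theorem~\ref{t:algebra}, yielding \eqref{e:minusVdelta}.

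Concretely, I fix $\delta>0$ and pick $\eta\in(0,\alpha\delta^2/4)$, so that by \eqref{e:nondeg0} one has $\{x\in X:h(x)<2\eta\}\subset V_{\delta/2}$. Choose $\phi\in C_c^\infty(\mathbb R)$ with $\phi\equiv 1$ on $[-\eta,\eta]$ and $\mathrm{supp}\,\phi\subset(-2\eta,2\eta)$; for $p$ large enough, $\lambda_p<\eta$, so $\phi(T_p)u_p=u_p$. I will use the Helffer--Sj\"ostrand representation
\[
\phi(T_p)=-\frac{1}{\pi}\int_{\mathbb C}\bar\partial\tilde\phi(z)\,(z-T_p)^{-1}\,dL(z)
\]
for an almost-analytic extension $\tilde\phi$ of $\phi$. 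To exploit it I build $(z-T_p)^{-1}$ as an asymptotic Toeplitz operator $\sum_{r\ge 0}p^{-r}T_{a_r(z),p}$, with $a_0(z)=(z-h)^{-1}$ and $a_r(z)$ a polynomial in $(z-h)^{-1}$ whose coefficients are $z$-independent smooth functions of $h$ and of the $g_l$'s and their derivatives, determined recursively from Theorem~\ref{t:algebra}. Integrating the expansion against $\bar\partial\tilde\phi$ turns each power $(z-h)^{-k-1}$ into a multiple of $\phi^{(k)}(h)$, producing $\phi(T_p)=\sum_{r=0}^N p^{-r}T_{\phi_r,p}+O(p^{-N-1})$ with $\phi_0=\phi\circ h$ and each $\phi_r$ supported in $\{h\in\mathrm{supp}\,\phi\}\subset V_{\delta/2}$.

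Now I pick $\chi\in C^\infty(X)$ with $\mathrm{supp}\,\chi\subset X\setminus V_{\delta/2}$ and $\chi\equiv 1$ on $X\setminus V_\delta$. Since $\chi^2\phi_r\equiv 0$ for every $r$ and the composition coefficients $C_s$ in Theorem~\ref{t:algebra} are bidifferential (hence local), repeated application of Theorem~\ref{t:algebra} gives $T_{\chi^2,p}\phi(T_p)=O(p^{-\infty})$ in operator norm. Using $\phi(T_p)u_p=u_p$ and $u_p\in\mathcal H_p$,
\[
\int_{X\setminus V_\delta}|u_p|^2\,dv_X\le\int_X\chi^2|u_p|^2\,dv_X=\langle T_{\chi^2,p}u_p,u_p\rangle=\langle T_{\chi^2,p}\phi(T_p)u_p,u_p\rangle=O(p^{-\infty}),
\]
which is the required estimate~\eqref{e:minusVdelta}.

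The main obstacle is establishing the asymptotic Toeplitz expansion of $(z-T_p)^{-1}$ with control uniform in $z$ on the support of $\bar\partial\tilde\phi$. The recursion for the $a_r(z)$ forces their degree in $(z-h)^{-1}$ to grow with $r$, so the operator norm of $T_{a_r(z),p}$ blows up like $|\mathrm{Im}\,z|^{-N_r}$ near the real axis; one must choose an almost analytic extension $\tilde\phi$ whose $\bar\partial$ vanishes to arbitrarily high order on $\mathbb R$ to make the Helffer--Sj\"ostrand integral converge at every order, and must verify that each remainder is a bona fide Toeplitz operator with symbol of the claimed support. This adapts to the symplectic setting Deleporte's argument for K\"ahler manifolds \cite{deleporte1}, building on the Bergman kernel expansion and Toeplitz calculus of \cite{ioos-lu-ma-ma,bergman}.
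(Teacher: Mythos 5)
Your approach is genuinely different from the paper's. The paper (following Deleporte's Proposition~3.1) works with the \emph{powers} $T_p^k$: it shows
\[
T_p^k = P_{\cH_p}\Bigl(h^k+\sum_{j=1}^{k-1}p^{-j}f_{j,k}+\cO(p^{-k})\Bigr)P_{\cH_p}
\]
with the pointwise pinching $|f_{j,k}|\le C_{jk}h^{k-j}$ (this is where the non-degeneracy~\eqref{e:nondeg0} enters, via the order of vanishing of $h$ on $U_0$ and the bidifferential structure of the $C_{r,k}$), and then proves by induction that $(u_p,h^ku_p)=\cO(p^{-k})$; the estimate~\eqref{e:minusVdelta} then follows from the lower bound $h\ge\alpha\delta^2$ off $V_\delta$. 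You instead run the smooth functional calculus: build an asymptotic Toeplitz parametrix for the resolvent via Theorem~\ref{t:algebra}, push it through the Helffer--Sj\"ostrand formula to realize $\phi(T_p)$ as an asymptotic Toeplitz operator with each symbol $\phi_r$ supported in $\{h\in\operatorname{supp}\phi\}$, and kill $T_{\chi^2,p}\phi(T_p)$ by disjointness of supports. Both routes are sound and give the same $\cO(p^{-\infty})$ conclusion. The trade-off: the paper's power argument needs only Theorem~\ref{t:algebra} applied finitely many times and is quite short, while yours packages the conclusion more conceptually (microsupport of eigensections lives in the sublevel set of the principal symbol), at the cost of proving a uniform-in-$z$ resolvent expansion and controlling all the Helffer--Sj\"ostrand remainders near the real axis --- which you correctly identify as the hard step, and which is where most of the actual work would go; this is essentially Charles' Toeplitz functional calculus transported from the K\"ahler to the symplectic setting. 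Two small points worth fixing in your writeup: (i) to get $\phi(\lambda_p)=1$ you also need a lower bound $\lambda_p>-\eta$; this follows for large $p$ from $T_p=T_{h,p}+\cO(p^{-1})$ and $T_{h,p}\ge 0$ since $h\ge 0$, but you should say so; (ii) the final remainder $R_N$ in $\phi(T_p)=\sum_{r\le N}p^{-r}T_{\phi_r,p}+R_N$ need not itself be a Toeplitz operator with support control --- a plain operator-norm bound $\|R_N\|=\cO(p^{-N-1})$ suffices, since $\|T_{\chi^2,p}R_N\|\le\|\chi^2\|_\infty\|R_N\|$.
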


\subsection{Asymptotic expansions of low-lying eigenvalues}\label{s:asymp-intro}
Let $T_p$ be a self-adjoint Toeplitz operator \eqref{e:Tp} with the principal symbol $h$, satisfying \eqref{e:minh=0}. For each non-degenerate minimum $x_0$ of $h$:
\[
h(x_0)=0, \quad {\rm Hess}\,h(x_0)>0,
\]
one can define the model operator for $\{T_{p}\}$ at $x_0$ in the following way. 

The second order term in the Taylor expansion of $h$ at $x_0$ (in normal coordinates near $x_0$): 
\[
q_{x_0}(Z)=\left(\frac12 {\rm Hess}\,h(x_0)Z,Z\right)
\]
is a positive quadratic form on $T_{x_0}X\cong \mathbb R^{2n}$.  

Consider a second order differential operator $\mathcal L_{x_0}$ in $C^\infty(T_{x_0}X)$ given by
\begin{equation}\label{e:defL0}
\mathcal L_{x_0}=-\sum_{j=1}^{2n} \left(\nabla_{e_j}+\frac 12 R^L_{x_0}(Z,e_j)\right)^2-\tau (x_0), 
\end{equation}
where $\{e_j\}_{j=1,\ldots,2n}$ is an orthonormal base in $T_{x_0}X$.  Here, for $U\in T_{x_0}X$, we denote by $\nabla_U$ the ordinary operator of differentiation in the direction $U$ on $C^\infty(T_{x_0}X)$.  Let $\mathcal P_{x_0}$ be the orthogonal projection in $L^2(T_{x_0}X)$ to the kernel of $\mathcal L_{x_0}$ (see Section~\ref{s:char} for more details).

The model operator for $\{T_{p}\}$ at $x_0$ is the Toeplitz operator $\mathcal T_{x_0}$ in $L^2(T_{x_0}X)$ given by
\[
\mathcal T_{x_0}=\mathcal P_{x_0}(q_{x_0}(Z)+g_1(x_0))\mathcal P_{x_0},
\]
where $g_1$ is the second coefficient in the asymptotic expansion \eqref{e:Tp} for the operator $\{T_{p}\}$. It is an unbounded self-adjoint operator in $L^2(T_{x_0}X)$ with discrete spectrum (see also below). The eigenvalues of $\mathcal T_{x_0}$ do not depend on the choice of normal coordinates, and the lowest eigenvalue is simple. 

Now suppose that each minimum of $h$ is non-degenerate. Then the zero set $U_0$ is a finite set of points:
\[
U_0=\{x_1,\ldots, x_N\}.
\]
Let $\mathcal T$ be the self-adjoint operator on $L^2(T_{x_1}X)\oplus \ldots \oplus  L^2(T_{x_N}X)$ defined by 
\begin{equation}\label{e:def-Tau}
\mathcal T=\mathcal T_{x_1}\oplus \ldots \oplus \mathcal T_{x_N}.
\end{equation}

\begin{thm}\label{t:DelB}
Let $\{\lambda^m_p\}$ be the non-decreasing sequence of the eigenvalues of $T_{p}$ on $\mathcal H_p$ (counted with multiplicities) and let $\{\mu_m\}$ be  
 the non-decreasing sequence of the eigenvalues of $\mathcal T$ (counted with multiplicities). 
Then, for any fixed $m$, $\lambda^m_p$ has an asymptotic expansion, when $p\to \infty$, of the form
\begin{equation}\label{e:DelB}
\lambda^m_p=p^{-1}\mu_m+p^{-3/2}\phi_m+\mathcal O(p^{-2})
\end{equation}
with some $\phi_m\in \mathbb R$. 
\end{thm}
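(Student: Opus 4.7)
The plan is to perform a semiclassical harmonic approximation near each well $x_j\in U_0$ at the rescaled length scale $1/\sqrt{p}$, reducing the spectral problem for $T_p$ at the bottom of the spectrum to the direct sum of the model problems for $\mathcal{T}_{x_1},\dots,\mathcal{T}_{x_N}$, and then extracting one extra term in the expansion from the cubic Taylor coefficients of $h$ together with the $O(p^{-1/2})$ correction in the Bergman kernel expansion of \cite{ma-ma08a}. I would obtain the upper bound $\lambda_p^m\le p^{-1}\mu_m+O(p^{-3/2})$ by constructing quasimodes from eigenstates of $\mathcal{T}$, and the matching lower bound by combining the localization of Theorem~\ref{low-l0} with an IMS-type partition of unity adapted to the decomposition $U_0=\{x_1,\dots,x_N\}$.

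For the upper bound, for each eigenvalue $\mu$ of some $\mathcal{T}_{x_j}$ with normalized eigensection $\psi\in \operatorname{Image}\mathcal{P}_{x_j}\subset L^2(T_{x_j}X)$, I would form the quasimode $u_{p}=P_{\mathcal{H}_p}\bigl(\chi_j \cdot \psi(\sqrt{p}\,\cdot)\bigr)$, where $\chi_j$ is a cutoff supported in a geodesic ball $B(x_j,\delta)$ and $\psi(\sqrt p\,\cdot)$ is understood in normal coordinates centered at $x_j$. Using the Toeplitz calculus and the near-diagonal Bergman kernel expansion, together with the Taylor expansion of $h$ around $x_j$ up to third order and the value $g_1(x_j)$ of the subprincipal symbol, one checks that $(T_p-p^{-1}\mu)u_p=O(p^{-3/2})\|u_p\|$ at leading order. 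Quasimodes attached to distinct wells have disjoint support up to exponentially small tails, and after Gram-Schmidt are nearly orthonormal; the min-max principle then gives the upper bound on $\lambda_p^m$ for each $m$.

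For the lower bound, Theorem~\ref{low-l0} shows that any eigensection with eigenvalue $<c/p$ is concentrated in $V_\delta=\bigcup_j B(x_j,\delta)$ up to $O(p^{-\infty})$. I would introduce a partition of unity $\chi_0^2+\sum_{j=1}^N\chi_j^2=1$ with $\chi_j$ supported near $x_j$ and $\chi_0$ supported in $X\setminus V_{\delta/2}$, and write the IMS-type identity $T_p=\sum_{j=0}^N \chi_j T_p \chi_j + R_p$, where $R_p$ collects commutators of the form $[\chi_j,[\chi_j,T_p]]$ which, by Theorem~\ref{t:algebra}, are Toeplitz operators of order $p^{-1}$. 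Away from $U_0$ one has $\chi_0 T_p \chi_0\ge(\varepsilon-O(p^{-1}))\chi_0^2$ since $h\ge\varepsilon>0$ there; near each well $x_j$, the rescaling $Z=u/\sqrt p$ in normal coordinates and the Toeplitz calculus identify $\chi_j T_p \chi_j$ with $p^{-1}\mathcal{T}_{x_j}$ up to $O(p^{-3/2})$. Applying min-max to this block-diagonal comparison, combined with the localization estimate, yields $\lambda_p^m\ge p^{-1}\mu_m-O(p^{-3/2})$.

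To sharpen the error to $O(p^{-2})$ and identify the coefficient $\phi_m$, I would refine the quasimode to a two-term ansatz $v_p=u_p+p^{-1/2}w_p$, where $w_p$ is obtained by solving the Fredholm equation $(\mathcal{T}_{x_j}-\mu)w=\phi_m\psi-P^\perp_\mu(\text{perturbation})\psi$ on the model space at $x_j$; the perturbation here collects the cubic term in the Taylor expansion of $h$ (which contributes $p^{-3/2}$ after rescaling) together with the first $p^{-1/2}$ correction in the Bergman kernel expansion of \cite{ma-ma08a} and the subprincipal contribution. Orthogonal projection onto $\operatorname{Ker}(\mathcal{T}_{x_j}-\mu)$ forces the value of $\phi_m$, and the resulting $v_p$ satisfies $(T_p-p^{-1}\mu_m-p^{-3/2}\phi_m)v_p=O(p^{-2})\|v_p\|$. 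Again by min-max one then gets the two-sided bound \eqref{e:DelB}. The main obstacle is the bookkeeping: handling the interaction between the off-diagonal Bergman kernel expansion (in powers of $p^{-1/2}$) and the Taylor expansion of $h$, and showing that all error terms are genuinely $O(p^{-2})$ in operator norm on the quasimode space, uniformly for the finitely many indices $m$ considered.
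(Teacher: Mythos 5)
Your overall architecture matches the paper's: quasimodes attached to each well give upper bounds, localization (Theorem~\ref{low-l0}) plus a reduction to the direct-sum model operator $\mathcal T$ gives lower bounds, and a two-term ansatz refines to $O(p^{-2})$. The main difference is in the mechanics of the lower bound. You use an IMS-type localization identity $T_p=\sum_j\chi_j T_p\chi_j+R_p$, whereas the paper (Lemma~\ref{l:sp}) splits a given low-lying eigensection $u_p$ directly into pieces $u_p^{(j)}$ supported near the wells, transfers each piece to the rescaled model space $L^2(T_{x_j}X)$ via the normal coordinate trivialization, and shows the transferred section $\tilde u_p$ is an approximate eigenfunction of $p^{-1}\mathcal T_p$ with error $O(p^{-3/2+\gamma})$; the spectral comparison then forces $p\lambda_p$ to be within $Cp^{-1/2+\gamma}$ of $\sigma(\mathcal T)$. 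The two routes are both reasonable; the paper's avoids the operator-algebra bookkeeping that IMS entails, at the cost of handling the cut-off tails explicitly.

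Two concrete issues in your proposal need attention. First, you control $R_p=\sum_j[\chi_j,[\chi_j,T_p]]$ by invoking Theorem~\ref{t:algebra}, but that theorem concerns products $T_{f,p}T_{g,p}$ of Toeplitz operators; the multiplication operator $\chi_j$ is not itself a Toeplitz operator, so Theorem~\ref{t:algebra} does not directly give a bound on $[\chi_j,T_p]$. What one actually needs is a kernel estimate: since $T_p=P_{\mathcal H_p}T_pP_{\mathcal H_p}$ and the Schwartz kernel of $T_p$ decays like $e^{-c\sqrt{p}\,d(x,x')}$ off the diagonal (Theorem~\ref{t:char}(ii)), the kernel of $[\chi_j,T_p]$ acquires a factor $\chi_j(x)-\chi_j(x')=O(d(x,x'))$ and the double commutator is $O(p^{-1})$ in operator norm. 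Moreover this alone is not enough (the eigenvalues themselves are $O(p^{-1})$); you must also use that $\nabla\chi_j$ is supported away from $U_0$, where the low-lying eigensections are $O(p^{-k})$ for any $k$ by Theorem~\ref{low-l0}, so that $R_p u_p=O(p^{-\infty})$ when applied to the sections you care about. Without that second observation the IMS remainder is not negligible.

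Second, your final step claims that a two-term quasimode together with min-max yields the two-sided expansion \eqref{e:DelB}. Min-max from quasimodes only furnishes an upper bound at $O(p^{-2})$ precision; to match it from below one must know that the spectral clusters of $T_p$ around $p^{-1}\mu_m$ have exactly the expected multiplicity. This is precisely what the paper establishes (the counting argument $m_p=m$ following Lemma~\ref{l:sp}): the number $m_p$ of eigenvalues of $T_p$ in $(\mu p^{-1}-Cp^{-3/2+\gamma},\mu p^{-1}+Cp^{-3/2+\gamma})$ equals the multiplicity $m$ of $\mu\in\sigma(\mathcal T)$. Once the multiplicity is pinned down, the quasimode approximations from Propositions~\ref{p:quasi1} and \ref{p:quasi2} do identify $\phi_m$ and give the $O(p^{-2})$ bound. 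Without the multiplicity step, your two-term ansatz only shows that \emph{some} eigenvalue lies within $O(p^{-2})$ of $p^{-1}\mu_m+p^{-3/2}\phi_m$, not that $\lambda_p^m$ does.
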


\begin{thm}\label{t:DelA}
If $\mu$ is a simple eigenvalue of $\mathcal T_{x_j}$ for some $j=1,\ldots,N$, then there exists a sequence $\lambda_p$ of eigenvalues of $T_{p}$ on $\mathcal H_p$ which admits a complete asymptotic expansion of the form 
\[
\lambda_p\sim p^{-1}\sum_{k=0}^{+\infty}a_kp^{-k}, \quad a_0=\mu.
\]
\end{thm}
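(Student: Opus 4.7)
The plan is to perform a Born--Oppenheimer / BKW quasi-mode construction near the well $x_j$, extending to the general symplectic setting the method used by Deleporte~\cite{deleporte1} in the K\"ahler case. I first fix normal coordinates on $X$ centered at $x_j$ and trivialize $L^p$ by parallel transport along radial geodesics, identifying a neighborhood of $x_j$ with a ball in $T_{x_j}X\cong\mathbb R^{2n}$. Under the rescaling $Z\mapsto Z/\sqrt p$, sections of $L^p$ supported near $x_j$ correspond to functions on $T_{x_j}X$, and combining the off-diagonal Bergman kernel expansion of \cite{ma-ma08a,bergman,Ko-ma-ma} with the Toeplitz calculus of Theorem~\ref{t:algebra} should yield a formal expansion
\[
p\,T_p \;\sim\; \mathcal T_{x_j} + \sum_{k\geq 1} p^{-k/2}\,\mathcal R_k,
\]
in which each $\mathcal R_k$ is a polynomial-coefficient differential operator on $T_{x_j}X$, computable from the Taylor expansions of $h,g_1,g_2,\ldots$ at $x_j$ and the universal coefficients of the model Bergman kernel.

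Next I would make the BKW ansatz
\[
u_p^{\mathrm{qm}} \sim \sum_{k\geq 0} p^{-k/2}\,f_k,\qquad \lambda_p^{\mathrm{qm}} \sim p^{-1}\Big(\mu + \sum_{k\geq 1} p^{-k/2}\,a_k\Big),
\]
substitute into $p\,T_p\,u_p^{\mathrm{qm}} = p\,\lambda_p^{\mathrm{qm}}\,u_p^{\mathrm{qm}}$, and match coefficients of $p^{-k/2}$ to obtain the recursive system
\[
(\mathcal T_{x_j}-\mu)\,f_k \;=\; a_k\,f_0 - F_k\bigl(f_0,\ldots,f_{k-1};a_1,\ldots,a_{k-1}\bigr),
\]
with $F_0=0$. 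The leading equation forces $f_0\in \ker(\mathcal T_{x_j}-\mu)\cap \ker \mathcal L_{x_j}$, which by simplicity of $\mu$ is spanned by a single normalized element $\varphi_0$. At each order $k\geq 1$, self-adjointness of $\mathcal T_{x_j}$ restricted to $\ker \mathcal L_{x_j}$ and the Fredholm alternative force $a_k = \langle F_k,\varphi_0\rangle$, after which $f_k$ is uniquely determined in $\varphi_0^{\perp}\cap \ker \mathcal L_{x_j}$ by inverting $\mathcal T_{x_j}-\mu$ on its range. Gaussian decay of the $f_k$ is inherited from the structure of $\ker \mathcal L_{x_j}$, keeping the iteration inside a Schwartz-type space on $T_{x_j}X$ and producing all $a_k,f_k$ by induction.

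To convert the formal expansion into an actual quasi-mode I would truncate at an arbitrarily large order $K$, multiply by a fixed smooth cutoff $\chi$ supported in the normal coordinate chart, undo the rescaling, transfer the compactly supported section to $L^p$ via the trivialization, and apply $P_{\mathcal H_p}$. The rapid decay of the $f_k$ and the off-diagonal decay of the Bergman kernel ensure that $\chi$ and $P_{\mathcal H_p}$ each contribute only $\mathcal O(p^{-\infty})$ errors, so the resulting $\tilde u_p^{(K)}\in \mathcal H_p$ satisfies $\|\tilde u_p^{(K)}\| = 1 + \mathcal O(p^{-\infty})$ together with $\|(T_p - \lambda_p^{(K)})\tilde u_p^{(K)}\| = \mathcal O(p^{-(K+3)/2})$. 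The spectral theorem applied to the self-adjoint $T_p$ then produces, for each $K$, an eigenvalue of $T_p$ within this distance of $\lambda_p^{(K)}$, and a diagonal extraction yields a single sequence $\lambda_p$ with the claimed full asymptotic expansion (half-integer terms being absorbed into the stated series). The main obstacle is the derivation and control of the operators $\mathcal R_k$: in the K\"ahler case Deleporte exploits the Bargmann representation of $\ker\mathcal L_{x_j}$ and the explicit holomorphic structure of the model Bergman projection, while in the general symplectic setting one must rely on the finer Bergman kernel asymptotics of \cite{bergman,Ko-ma-ma,ma-ma08a} and carefully verify that each step of the iteration preserves Schwartz class on $T_{x_j}X$.
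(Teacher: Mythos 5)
Your strategy — normal coordinates at the well, rescaling by $\sqrt p$, formal expansion of $pT_p$ about the model operator $\mathcal T_{x_j}$, a recursive BKW ansatz in powers of $p^{-1/2}$ closed by the Fredholm alternative, cutoff, projection onto $\mathcal H_p$, and the spectral theorem for the self-adjoint $T_p$ — is precisely the route taken in the paper, via Propositions~\ref{p:quasi1} and \ref{p:quasi2} (which themselves adapt Deleporte's K\"ahler argument to the symplectic Bergman kernel expansions of \cite{ma-ma08a,bergman,Ko-ma-ma}).

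There is, however, one genuine gap. Your BKW ansatz naturally produces a series in $p^{-1/2}$,
\[
\lambda_p^{\mathrm{qm}} \sim p^{-1}\Bigl(\mu + \sum_{k\ge 1}a_k\,p^{-k/2}\Bigr),
\]
whereas the theorem asserts an expansion in integer powers of $p^{-1}$. Writing that the half-integer terms are ``absorbed into the stated series'' is not an argument: either the odd-order coefficients $a_1,a_3,\dots$ vanish, or the theorem as stated is false. The paper makes the vanishing explicit in Proposition~\ref{p:quasi2} ($\lambda_k=0$ for $k$ odd), and the mechanism is a parity argument that you need to supply. By Theorem~\ref{t:char}(iii), the coefficients $\mathcal Q_{r,x_j}(Z,Z')$ of the rescaled kernel expansion are polynomials of the same parity as $r$; hence, after composing with the parity-even model projection $\mathcal P_{x_j}$, the operator appearing at order $p^{-r/2}$ changes the parity of a function by $(-1)^r$. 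The model operator $\mathcal T_{x_j}$ (the $r=2$ term) commutes with the reflection $Z\mapsto -Z$, so by simplicity of $\mu$ the leading profile $\varphi_0$ has a definite parity. Propagating this through your recursion, the forcing term $F_k$ built from $\mathcal R_1,\dots,\mathcal R_k$ acting on $f_0,\dots,f_{k-1}$ has parity $(-1)^k$ relative to $\varphi_0$, so for $k$ odd it is orthogonal to $\varphi_0$ and the solvability condition gives $a_k = \langle F_k,\varphi_0\rangle = 0$. Without this parity bookkeeping the construction only yields an expansion in $p^{-1/2}$ and does not establish the statement of the theorem.

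A smaller but related remark: $\mathcal T_{x_j}$ is defined on all of $L^2(T_{x_j}X)$ and annihilates $(\ker\mathcal L_{x_j})^\perp$, so ``simple eigenvalue'' must be read as simple for the restriction $\mathcal T_{x_j}\restriction_{\ker\mathcal L_{x_j}}$; you implicitly do so by inverting $\mathcal T_{x_j}-\mu$ on $\varphi_0^\perp\cap\ker\mathcal L_{x_j}$, but you should also verify that each $\mathcal R_k$ (being of the form $\mathcal P_{x_j}\mathcal Q_{k+2,x_j}\mathcal P_{x_j}$ after rescaling) indeed maps $\ker\mathcal L_{x_j}$ to itself, so that the Fredholm alternative is applied on the correct space.
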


In the case when $(X,\omega)$ is a K\"ahler manifold, that is, $J=J_0$ and $J$ is a complex structure, these results are slight refinements of the main results of \cite{deleporte1}.   

\subsection{Applications to the Bochner Laplacian} 
We assume that the function $\tau\in C^\infty(X)$ satisfies the following conditions:
\begin{itemize}
\item $\min_{x\in X}\tau(x)=\tau_0$;
\item There exists a unique $x_0\in X$ such that $\tau(x)=\tau_0$, which is non-degenerate:
\[
{\rm Hess}\,\tau(x_0)>0. 
\]
\end{itemize}

Denote by 
\[
q_{x_0}(Z)=\left(\frac12 {\rm Hess}\,\tau(x_0)Z,Z\right), \quad Z\in T_{x_0}X.
\]
We will also need a smooth function $J_{1,2}$ on $X$, which appears in the leading coefficient in the asymptotic expansion of the restriction to the diagonal of the generalized Bergman kernel $P_{1,p}$ of $\Delta_p$, which is the smooth kernel of the operator $\Delta_pP_{\mathcal H_p}$ with respect to $dv_X$. We refer the reader to Section~\ref{s:41} for more information on this function. 

Consider the Toeplitz operator $\mathcal T_{x_0}$ in $L^2(T_{x_0}X)$ defined by
\[
\mathcal T_{x_0}=\mathcal P_{x_0}(q_{x_0}(Z)+J_{1,2}(x_0))\mathcal P_{x_0}.
\]
Denote by $\{\mu_j\}$ the non-decreasing sequence of the eigenvalues of $\mathcal T_{x_0}$ (counted with multiplicities). 

Let $\{\lambda_j(\Delta^{L^p})\}$ be the non-decreasing sequence of the eigenvalues of the operator $\Delta^{L^p}$ (counted with multiplicities).
As a consequence of Theorem \ref{t:DelB} and variational technique, we obtain upper estimates for $\lambda_j(\Delta^{L^p})$.

\begin{thm}\label{t:upper-estimates}
For any $j\in \NN$, there exists  $\phi_j\in \RR$ such that
\[
\lambda_j(\Delta^{L^p})\leq p\tau_0+\mu_j+p^{-1/2}\phi_j+\mathcal O(p^{-1}), \quad p\to\infty.
\]
\end{thm}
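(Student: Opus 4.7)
The plan is to reduce the upper bound to Theorem~\ref{t:DelB} via the min--max principle, after interpreting the restriction of $\Delta^{L^p}-p\tau_0$ to the low-lying spectral subspace $\mathcal{H}_p$ as a Toeplitz operator with the correct principal and subprincipal symbols.

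First, I would rewrite $\Delta^{L^p}=\Delta_p+p\tau = \Delta_p+p(\tau-\tau_0)+p\tau_0$ and introduce the operator on $\mathcal{H}_p$
\[
T_p := \tfrac{1}{p}\,P_{\mathcal{H}_p}\!\left(\Delta_p+p(\tau-\tau_0)\right)P_{\mathcal{H}_p}
      = P_{\mathcal{H}_p}(\tau-\tau_0)P_{\mathcal{H}_p}+\tfrac{1}{p}P_{\mathcal{H}_p}\Delta_p P_{\mathcal{H}_p}.
\]
The first summand is the Toeplitz operator with symbol $h:=\tau-\tau_0$, which satisfies \eqref{e:minh=0} and, by the hypotheses on $\tau$, has a unique non-degenerate zero at $x_0$ with Hessian quadratic form $q_{x_0}$. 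The second summand is precisely $p^{-1}$ times the operator whose Schwartz kernel is the generalized Bergman kernel $P_{1,p}$; the key input from Section~\ref{s:41} is the on-diagonal asymptotic expansion of $P_{1,p}$, which (together with the off-diagonal decay provided by \cite{ma-ma08a}) shows that $P_{\mathcal{H}_p}\Delta_p P_{\mathcal{H}_p}$ is a Toeplitz operator whose principal symbol is $J_{1,2}$. Consequently $T_p$ is a self-adjoint Toeplitz operator of the form \eqref{e:Tp} with principal symbol $g_0=h=\tau-\tau_0$ and next coefficient $g_1=J_{1,2}$.

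Next, I would apply Theorem~\ref{t:DelB} to $T_p$. Since the minimum set of $\tau-\tau_0$ is the single non-degenerate point $x_0$, we have $N=1$ and the model operator is
\[
\mathcal{T}=\mathcal{T}_{x_0}=\mathcal{P}_{x_0}\bigl(q_{x_0}(Z)+J_{1,2}(x_0)\bigr)\mathcal{P}_{x_0},
\]
which is exactly the operator appearing in the statement. Denoting the ordered eigenvalues of $T_p$ on $\mathcal{H}_p$ by $\lambda^j_p(T_p)$, Theorem~\ref{t:DelB} gives
\[
\lambda^j_p(T_p)=p^{-1}\mu_j+p^{-3/2}\phi_j+\mathcal{O}(p^{-2}),\qquad p\to\infty.
\]

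To convert this into an upper bound for $\lambda_j(\Delta^{L^p})$, I invoke the min--max principle using $\mathcal{H}_p$ as a test subspace. For $u\in\mathcal{H}_p$ one has $\langle\Delta^{L^p}u,u\rangle=\langle P_{\mathcal{H}_p}\Delta^{L^p}P_{\mathcal{H}_p}u,u\rangle=p\tau_0\|u\|^2+p\langle T_pu,u\rangle$, and therefore
\[
\lambda_j(\Delta^{L^p})\le\lambda_j\!\left(P_{\mathcal{H}_p}\Delta^{L^p}P_{\mathcal{H}_p}\big|_{\mathcal{H}_p}\right)=p\tau_0+p\,\lambda^j_p(T_p),
\]
which combined with the previous expansion yields the claim with $\phi_j$ as in Theorem~\ref{t:DelB}.

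The main obstacle is Step two: verifying that $T_p$ genuinely satisfies Definition~(i)--(ii) with the claimed principal and subprincipal symbols. The potential $p(\tau-\tau_0)$ is large, so one cannot simply quote the Toeplitz product rule of Theorem~\ref{t:algebra}; instead one must use the Bergman-kernel asymptotics developed in \cite{ma-ma08a,bergman} for the generalized Bergman kernel $P_{1,p}$ (recorded in Section~\ref{s:41}), in particular the identification of its leading on-diagonal coefficient with $J_{1,2}$. Once this algebraic fact is in place, everything else is a straightforward combination of Theorem~\ref{t:DelB} and the variational characterization of $\lambda_j(\Delta^{L^p})$.
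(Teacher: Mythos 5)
Your proposal is correct and follows essentially the same route as the paper's proof: reduce to the compression $p^{-1}P_{\mathcal H_p}\Delta^{L^p}P_{\mathcal H_p}$ via Rayleigh--Ritz, use the fact from \cite{bergman,ma-ma08a} that $\Delta_pP_{\mathcal H_p}$ is a Toeplitz operator with principal symbol $J_{1,2}$ to identify the full Toeplitz symbol, and then invoke Theorem~\ref{t:DelB}. The only difference is cosmetic --- you explicitly subtract $p\tau_0$ so that the principal symbol $\tau-\tau_0$ satisfies \eqref{e:minh=0}, which the paper leaves implicit.
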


In the case when the line bundle $L$ is trivial, the corresponding Bochner Laplacian $\Delta^{L^p}$ is closely related with the magnetic Schr\"odinger operator $H^h$ with the semiclassical parameter $h=p^{-1}$
(see Section~\ref{s:4} for more details). The case when the form $\omega$ is symplectic corresponds to the case when the magnetic field is non-vanishing (of full rank). In the two-dimensional case, spectral properties of the magnetic Schr\"odinger operator with non-vanishing magnetic field were studied in \cite{Helffer-K11,HM01,Helffer-K12,Helffer-K15,RV15} (see also the survey paper \cite{Helffer-K14} and the book \cite{Raymond-book}). As shown in Section~\ref{s:2Dcase}, for a two-dimensional magnetic Laplacian, the upper bounds of Theorem~\ref{t:upper-estimates} are sharp and agree with the asymptotic expansions of \cite{Helffer-K11,HM01}.

The paper is organized as follows. In Section~\ref{s:2}, we study localization properties of eigensections of Toeplitz operators and prove Theorem~\ref{t:forbidden} and \ref{low-l0}. Section~\ref{s:3} is devoted to asymptotic expansions of low-lying eigenvalues. Here we prove Theorems~\ref{t:DelB} and \ref{t:DelA}. Finally, Section~\ref{s:4} contains applications to the Bochner Laplacian.

\section{Asymptotic decay of eigensections}\label{s:2}

\subsection{Differential operators and Sobolev spaces}
We will need to consider families of differential operators $A_p$ acting in $C^\infty(X,L^p)$ with $p\in \NN$. In this section, we collect some necessary results (see, for instance, \cite{Ko-ma-ma}). 

As usual, we introduce the $L^2$-norm on $L^2(X,L^p)$ by 
\[
\|u\|^2_{p,0}=\int_{X}|u(x)|^2dv_{X}(x), \quad u\in L^2(X,L^p).
\]
For any integer $m>0$, we introduce the norm $\|\cdot\|_{p,m}$ on the Sobolev space $H^m(X,L^p)$ of order $m$ by the formula
\[
\|u\|^2_{p,m}=\sum_{\ell=0}^m \int_{X} \left|\left(\frac{1}{\sqrt{p}}\nabla^{L^p}\right)^\ell u(x)\right|^2 dv_{X}(x), \quad u\in H^m(X,L^p).
\]
Denote by $\langle\cdot,\cdot\rangle_{p,m}$ the corresponding inner product on $H^m(X,L^p)$. For any integer $m<0$, we define the norm in the Sobolev space $H^m(X,L^p)$ by duality.  
For any bounded linear operator $A : H^m(X,L^p)\to H^{m^\prime}(X,L^p)$, $m,m^\prime\in \mathbb Z$, we will denote its norm by $\|A\|^{m,m^\prime}_p$.

Let $E$ be a vector bundle over $X$. Suppose that $E$ 
is Euclidean or Hermitian depending on whether it is real or complex 
and equipped with a metric connection $\nabla^E$. 
The Levi-Civita connection $\nabla^{TX}$ on $(X,g^{TX})$ and 
the connection $\nabla^E$ define a metric connection
$\nabla^E : {C}^\infty(X, (T^*X)^{\otimes j}
\otimes E)\to {C}^\infty(X, (T^*X)^{\otimes (j+1)}
\otimes E)$
on each vector bundle $(T^*X)^{\otimes j} \otimes E$ for 
$j\in \mathbb N$, that allows us to introduce the operator 
$$\big(\nabla^E\big)^{\!\ell} : 
{C}^\infty(X, E)
\to {C}^\infty(X, (T^*X)^{\otimes \ell} \otimes E)$$ 
for every $\ell\in \mathbb N$.
Any differential operator $A$ of order $q$ acting in 
${C}^\infty(X,E)$ can be written as
\begin{equation}\label{e2.1}
A=\sum_{\ell=0}^q a_\ell\cdot \big(\nabla^E\big)^{\!\ell},
\end{equation}
where $a_\ell \in {C}^\infty(X, (TX)^{\otimes \ell})$ 
and the endomorphism 
$\cdot : (TX)^{\otimes \ell}\otimes ((T^*X)^{\otimes \ell}
\otimes E) \to E$ is given by the contraction. 

Denote by $D^q(X,L^p)$ the space of differential operators of order $q$, acting on $C^\infty(X,L^p)$. We say that a family $\{A_p\in D^q(X,L^p), p\in \mathbb N\}$ is bounded in $p$, if 
\[
A_p=\sum_{\ell=0}^q a_{p,\ell}\cdot \left(\frac{1}{\sqrt{p}}\nabla^{L^p}\right)^\ell,\quad a_{p,\ell} \in C^\infty(X, (TX)^{\otimes \ell}),
\]
and, for any $\ell=0,1,\ldots,q$, the family $\{a_{p,\ell}, p\in \mathbb N\}$ is bounded in the Frechet space $C^\infty(X, (TX)^{\otimes \ell})$.
An example of a bounded in $p$ family of differential operators is given by $\{\frac{1}{p}\Delta_p, , p\in \mathbb N\}$.

Recall that any operator $A\in D^q(X,L^p)$ defines a bounded operator from $H^{m+q}(X,L^p)$ to $H^{m}(X,L^p)$ for any $m\in \mathbb Z$. The following proposition provides a control on the operator norms of a family of differential operators as above. 
 
\begin{prop}\label{p:Sobolev-mapping}
If a family $\{A_p\in D^q(X,L^p), p\in \mathbb N\}$ is bounded in $p$, then for any $m\in \mathbb Z$, there exists $C_m>0$ such that, for all $p\in \mathbb N$,  
\[
\|A_pu\|_{p,m}\leq C_m\|u\|_{p,m+q},\quad u\in H^{m+q}(X,L^p).
\]
\end{prop}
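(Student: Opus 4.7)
The plan is to separate the proof into the case $m \geq 0$ (direct) and $m < 0$ (by duality), with the whole argument resting on the closure property that the class of bounded-in-$p$ families of scaled differential operators is stable under both composition with the rescaled connection $\frac{1}{\sqrt{p}}\nabla^{L^p}$ and taking formal adjoints. The crucial quantitative observation is the scaled commutator identity
\[
\Bigl[\tfrac{1}{\sqrt{p}}\nabla^{L^p}_X,\ \tfrac{1}{\sqrt{p}}\nabla^{L^p}_Y\Bigr] \;=\; \tfrac{1}{p}R^{L^p}(X,Y) + \tfrac{1}{p}\nabla^{L^p}_{[X,Y]}\;=\; 2\pi i\,\omega(X,Y) + \tfrac{1}{\sqrt{p}}\cdot \tfrac{1}{\sqrt{p}}\nabla^{L^p}_{[X,Y]},
\]
in which the rescaling exactly cancels the $p$-growth of the curvature $R^{L^p}=pR^L$. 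Because of this, commuting two scaled derivatives produces a bounded zero-order term plus a scaled first-order term, and applying Leibniz to $a_{p,\ell}\cdot\bigl(\tfrac{1}{\sqrt{p}}\nabla^{L^p}\bigr)^\ell$ also produces the small prefactor $\tfrac{1}{\sqrt{p}}\nabla a_{p,\ell}$ on the coefficient-derivative term.

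For $m\geq 0$ I would argue by induction on $m$. The base case $m=0$ is immediate from the definition of $\|\cdot\|_{p,0}$, since the $a_{p,\ell}$ are uniformly bounded in $C^0$ by hypothesis and hence
\[
\|A_p u\|_{p,0}\;\leq\;\sum_{\ell=0}^{q}\|a_{p,\ell}\|_{C^0}\,\Bigl\|\bigl(\tfrac{1}{\sqrt{p}}\nabla^{L^p}\bigr)^\ell u\Bigr\|_{p,0}\;\leq\;C\|u\|_{p,q}.
\]
For the inductive step, one shows that $\bigl(\tfrac{1}{\sqrt{p}}\nabla^{L^p}\bigr)^\ell A_p$ can be rewritten, by repeated application of Leibniz and the commutator identity above, as a bounded-in-$p$ family of scaled differential operators of order $\ell+q$ (all the coefficients involved are bounded in $C^\infty$ because they are polynomial expressions in $a_{p,\ell}$, its derivatives, $R^L$, and the Christoffel symbols of $\nabla^{TX}$, each appearing with the correct power of $p^{-1/2}$). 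Summing the $L^2$-estimates of all $\ell$ up to $m$ then yields the desired bound.

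For $m<0$ I would use duality. The formal adjoint of a bounded family $\{A_p\}$ of order $q$ is again a bounded family of order $q$: integration by parts against $dv_X$ turns $a\cdot\tfrac{1}{\sqrt{p}}\nabla^{L^p}$ into $-\tfrac{1}{\sqrt{p}}\nabla^{L^p}\cdot a^* + \tfrac{1}{\sqrt{p}}(\operatorname{div}a)^*$, and the divergence term is harmless precisely because of the $\tfrac{1}{\sqrt{p}}$ prefactor. Writing
\[
\|A_p u\|_{p,m}\;=\;\sup_{\|v\|_{p,-m}=1}\bigl|\langle A_p u,v\rangle_{p,0}\bigr|\;=\;\sup_{\|v\|_{p,-m}=1}\bigl|\langle u,A_p^*v\rangle_{p,0}\bigr|,
\]
and applying the already-established nonnegative case to $A_p^*$ (which sends $H^{-m}$ to $H^{-m-q}$ boundedly, since $-m\geq 0$), we obtain the claim.

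The only genuine obstacle is the bookkeeping in the inductive step: one must verify that, after all Leibniz rules and curvature commutators have been unfolded, every resulting term is a scaled differential operator whose coefficient lies in a bounded set of $C^\infty(X,(TX)^{\otimes\bullet})$ uniformly in $p$. This is a routine but careful calculation, made transparent by the identity displayed above; without the $p^{-1/2}$ rescaling of $\nabla^{L^p}$, the curvature contribution would grow like $p$ and the whole estimate would collapse.
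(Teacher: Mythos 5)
The paper itself states this proposition without proof, referring to \cite{Ko-ma-ma}, so I can only assess your argument on its own terms. The overall strategy — direct estimate for non-negative target index, duality for negative, with the whole thing resting on closure of the class of bounded-in-$p$ families under composition with $\tfrac{1}{\sqrt p}\nabla^{L^p}$ and under formal adjoint, the decisive cancellation being $\tfrac1p R^{L^p}=R^L$ — is sound, and your base case and the ``inductive'' rewriting of $(\tfrac{1}{\sqrt p}\nabla^{L^p})^\ell A_p$ as a bounded family of order $\ell+q$ are correct. (The sign in the curvature term should be $-2\pi i\,\omega(X,Y)$ since $R^L=-2\pi i\omega$, but that is immaterial.)

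There is, however, a genuine gap in your duality step. You justify applying the non-negative case to $A_p^*$ by noting ``$-m\geq 0$,'' but the non-negative case you proved bounds $\|B_p v\|_{p,m'}\leq C\|v\|_{p,m'+q}$ only for \emph{target} index $m'\geq 0$. You need $\|A_p^*v\|_{p,-m-q}\leq C\|v\|_{p,-m}$, which is the non-negative case for $A_p^*$ with target $m'=-m-q$; this requires $-m-q\geq 0$, i.e.\ $m\leq -q$. For $q\geq 2$ the range $-q<m<0$ is covered by neither your direct argument nor your duality argument as written, so the proof is incomplete there. The standard repair is to first establish the proposition for families of order $0$ and order $1$ (for those, the two ranges $m\geq 0$ and $m\leq -q$ exhaust $\mathbb Z$, so there is no gap), and then obtain the general-order statement by writing $A_p$ as a sum of compositions of order-$\leq 1$ bounded families, using the closure-under-composition bookkeeping you already set up. Alternatively one can interpolate between the endpoint estimates at $m=0$ and $m=-q$, but the composition route stays entirely within the elementary framework you chose.
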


\subsection{Weighted estimates for eigensections}
Here we prove weighted estimates for eigensections of Toeplitz operators, satisfying \eqref{e:minh=0} and \eqref{e:expT-h}.  

Let $\rho$ be a Lipschitz function on $X$, that is, there exists $L>0$ such that 
\begin{equation}\label{e:Lip}
|\rho(x)-\rho(y)|\leq Ld(x,y), \quad x,y\in X.
\end{equation}

\begin{prop}\label{p:weighted-est}
Assume that $\{T_p\}$ is a self-adjoint Toeplitz operator \eqref{e:Tp}, satisfying the conditions \eqref{e:minh=0} and \eqref{e:expT-h}. Let $u_p$ be a sequence of  eigensections of $\{T_{p}\}$:
\begin{equation}\label{e:eigenT}
T_{p}u_p=\lambda_pu_p, \quad u_p\in {\cH_p}.
\end{equation}
There exist $C_1,C_2, C_3>0$ and $\alpha_0>0$ such that for any $\alpha\in \mathbb R$ such that $|\alpha|<\alpha_0$ and $p\in \NN$, 
\[
\int_X e^{2\alpha\sqrt{p} \rho(x)} \left[ h(x)-\left(1+C_1|\alpha|\right)\lambda_p-\frac{C_2|\alpha|}{\sqrt{p}}-\frac{C_3}{p}\right]|u_p(x)|^2\,dv_X(x)\leq 0.
\]
\end{prop}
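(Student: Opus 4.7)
The plan is an Agmon-type weighted estimate. Set $\phi:=e^{\alpha\sqrt p\,\rho}$. Testing $T_pu_p=\lambda_pu_p$ against $\phi^2 u_p$ and using $u_p\in\cH_p$ together with the self-adjointness of $P_{\cH_p}$ gives
\[
\lambda_p\|\phi u_p\|^2=\langle hu_p,\,P_{\cH_p}\phi^2u_p\rangle + \langle R_pu_p,\phi^2 u_p\rangle,
\]
with $R_p:=T_p-P_{\cH_p}hP_{\cH_p}$. Applying the reproducing property $u_p(x)=\int P_{\cH_p}(x,y)u_p(y)\,dv_X(y)$ rewrites the first term as $\int h\phi^2|u_p|^2\,dv_X - E$, where
\[
E:=\iint h(x)\,\overline{u_p(x)}\,P_{\cH_p}(x,y)\,\bigl(\phi(x)^2-\phi(y)^2\bigr)\,u_p(y)\,dv_X(x)\,dv_X(y).
\]
Hence $\int h\phi^2|u_p|^2\,dv_X=\lambda_p\|\phi u_p\|^2+E-\langle R_pu_p,\phi^2 u_p\rangle$, and it suffices to bound $|E|$ and $|\langle R_pu_p,\phi^2 u_p\rangle|$ by a small multiple of $\int h\phi^2|u_p|^2\,dv_X$ plus admissible remainders of order $|\alpha|p^{-1/2}\|\phi u_p\|^2$ and $p^{-1}\|\phi u_p\|^2$.

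The remainder $\langle R_pu_p,\phi^2 u_p\rangle$ is the easy piece: combining the decay hypothesis \eqref{e:expT-h} with $\phi(x)/\phi(y)\leq e^{|\alpha|L\sqrt p\,d(x,y)}$ and Schur's test yields $|\langle R_pu_p,\phi^2u_p\rangle|\leq Cp^{-1}\|\phi u_p\|^2$ as soon as $|\alpha|L<a/2$, producing the $C_3/p$ term. The main obstacle is the estimate for $E$: I use the off-diagonal Bergman kernel bound $|P_{\cH_p}(x,y)|\leq Cp^n e^{-c\sqrt p\,d(x,y)}$ (Ma--Marinescu) together with
\[
|\phi(x)^2-\phi(y)^2|\leq C|\alpha|\sqrt p\,d(x,y)\,\phi(x)^2\,e^{2|\alpha|L\sqrt p\,d(x,y)}.
\]
To preserve the factor $h$ on the right-hand side (rather than losing it to $\|h\|_\infty$), I symmetrize the pointwise integrand by $2|u_p(x)||u_p(y)|\leq|u_p(x)|^2+|u_p(y)|^2$, transfer the weight with $\phi(x)^2\leq\phi(y)^2 e^{2|\alpha|L\sqrt p\,d(x,y)}$, and exchange $h(x)$ for $h(y)$ up to an $O(d(x,y))$ term via the smoothness of $h$. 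The standard decay estimates $\int d(x,y)^k e^{-c\sqrt p\,d(x,y)}\,dv_X(y)=O(p^{-n-k/2})$ for $k=1,2$ then produce the crucial cancellation between the $\sqrt p$ coming from $|\phi(x)^2-\phi(y)^2|$ and the $p^{-1/2}$ gained from $\int d(x,y)|P_{\cH_p}(x,y)|\,dv_X$, yielding
\[
|E|\leq C|\alpha|\int h\phi^2|u_p|^2\,dv_X + C'|\alpha|p^{-1/2}\|\phi u_p\|^2.
\]

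Combining the two estimates gives
\[
\int h\phi^2|u_p|^2\,dv_X\leq \lambda_p\|\phi u_p\|^2+C|\alpha|\int h\phi^2|u_p|^2\,dv_X+C'|\alpha|p^{-1/2}\|\phi u_p\|^2+Cp^{-1}\|\phi u_p\|^2.
\]
Choosing $\alpha_0$ so small that $C|\alpha|<1/2$ for $|\alpha|<\alpha_0$, I absorb the first error term on the left and use $(1-C|\alpha|)^{-1}\leq 1+C_1|\alpha|$ to reach the stated inequality. The heart of the argument is the cancellation described above, which makes the contribution of $E$ truly linear in $|\alpha|$ rather than $|\alpha|\sqrt p$; this is what allows the weighted estimate to hold uniformly in $p$.
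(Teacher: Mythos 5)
Your proof is correct, and it reaches the estimate by a genuinely different technical route. The overall Agmon-weight strategy coincides, and your error term $E$ is exactly the commutator term $([P_{\cH_p},\phi^2]hu_p,u_p)$ appearing in the paper's key identity \eqref{e:p2.2}; the difference is in how this commutator is controlled. The paper first regularizes the Lipschitz weight $\rho$ into a smooth $\tilde\rho_p$ (it must, because it conjugates a \emph{differential} operator, $\Delta_{p;\alpha}=f_{\alpha,p}\Delta_p f_{-\alpha,p}$), represents $P_{\cH_p}$ as a contour integral of $(\lambda-p^{-1}\Delta_p)^{-1}$, proves uniform weighted resolvent bounds (Theorem~\ref{Thm1.7W}), and from these extracts the operator-norm estimates $\|f_{-\alpha,p}[P_{\cH_p},f_{2\alpha,p}]f_{-\alpha,p}\|=\mathcal O(|\alpha|)$ (Lemma~\ref{l:1}) and $\|f_{\alpha,p}[h,P_{\cH_p}]f_{-\alpha,p}\|=\mathcal O(p^{-1/2})$ (Lemma~\ref{l:2}), after splitting the commutator term into a $\lambda_p$-piece, an $R_p$-piece and an $[h,P_{\cH_p}]$-piece. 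You instead work entirely at the level of Schwartz kernels: you invoke the off-diagonal Bergman decay $|P_{\cH_p}(x,y)|\le Cp^n e^{-c\sqrt p\,d(x,y)}$ (available in this setting by \cite{LMM,bergman}), pair it with the elementary Lipschitz bound on $\phi^2$, transfer $h(x)\mapsto h(y)$ via $|h(x)-h(y)|\lesssim d(x,y)$, symmetrize, and use the moment estimates $\int d(x,y)^k|P_{\cH_p}(x,y)|\,dv_X(y)=\mathcal O(p^{-k/2})$. Because you never differentiate the weight, you can keep the raw Lipschitz $\rho$ rather than its mollification, which in fact delivers the stated inequality for $\rho$ directly, whereas the paper's proof produces it for $\tilde\rho_p$ and tacitly appeals to $|\tilde\rho_p-\rho|\le c_0/\sqrt p$ to recover the stated form. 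The trade-off is that your argument takes the off-diagonal Bergman estimate as a black box, while the paper's resolvent machinery is self-contained and carries over unchanged to the more general settings of \cite{Ko-ma-ma}. Two small points to make explicit when writing this up: split $h(x)$ as $h(y)+(h(x)-h(y))$ \emph{before} symmetrizing $2|u_p(x)||u_p(y)|\le|u_p(x)|^2+|u_p(y)|^2$, so the $h$-weighted contribution lands cleanly on $\int h\phi^2|u_p|^2$; and note that $\alpha_0$ must also be chosen small enough that $e^{c'|\alpha|\sqrt p\,d(x,y)}$ does not overwhelm the Gaussian decay of $P_{\cH_p}$.
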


The rest of this subsection is devoted to the proof of Proposition~\ref{p:weighted-est}. 
By a standard averaging procedure \cite[Proposition 4,1]{Kor91} (see also \cite[Section 3.1]{Ko-ma-ma}), one can construct a sequence $\tilde \rho_p\in C^\infty(X), p\in \NN,$ satisfying the following conditions:

(1) there exists $c_0>0$ such that 
\begin{equation}
\label{(1.1)}
\vert \tilde \rho_p(x) - \rho(x)\vert  < \frac{c_0}{\sqrt{p}},\quad x\in X, p\in \NN;
\end{equation}

(2) for any $k>0$, there exists $c_k>0$ such that  
\begin{equation}
\label{dist}
\left\| \tilde \rho_p\right\|_{C^k(X)} < c_{k}\left(\frac{1}{\sqrt{p}}\right)^{1-k},\quad p\in \NN.
\end{equation}

For any $\alpha \in \mathbb R$ and $p\in \NN$, consider the function $f_{\alpha,p}\in C^\infty(X)$ given by 
\[
f_{\alpha,p}(x)=e^{\alpha\sqrt{p}\tilde \rho_p(x)}, \quad x\in X. 
\]
These functions play role of weighted functions in our estimates. We will use the same notation $f_{\alpha,p}$ for the multiplication operator by $f_{\alpha,p}$ in $C^\infty(X,L^p)$.  Instead of working directly with the associated weighted spaces, we will consider operators of the form $\{f_{\alpha,p}Af_{-\alpha,p}\}$ and only at the very end switch to weighted estimates.  

Observe that, for $v\in C^\infty(X,TX)$, 
\[
\nabla^{L^p}_{\alpha;v}:=f_{\alpha,p}\nabla^{L^p}_{v}f^{-1}_{\alpha,p}=\nabla^{L^p}_{v}-\alpha\sqrt{p} d\tilde{\rho}_{p}(v).
\]
Therefore, for any $a>0$ and $v\in C^\infty(X,TX)$, the family $\{\frac{1}{\sqrt{p}}\nabla^{L^p}_{\alpha;v} : |\alpha|<a\}$ is a family of operators from $D^1(X,L^p)$, uniformly bounded in $p$.
The operator  $\Delta_{p;\alpha}:=f_{\alpha,p}\Delta_{p}f^{-1}_{\alpha,p}$ has the form
\begin{equation}\label{Lta}
\Delta_{p;\alpha}=\Delta_{p}+\alpha A_{p}+\alpha^2B_{p}, 
\end{equation}
where $A_{p}\in D^1(X,L^p)$ and $B_{p}\in D^0(X,L^p)$. Moreover, the families $\{\frac{1}{p}A_{p} : p\in \mathbb N\}$ and $\{\frac{1}{p}B_{p} : p\in \mathbb N\}$ are uniformly bounded in $p$.
Indeed, using \eqref{e:DeltaLp1}, one can check that, if $\{e_j\}_{j=1,\ldots,2n}$ is a local orthonormal frame of $TX$, then 
\begin{align}\label{e:Ap}
A_p=& \sqrt{p}  \sum_{j=1}^{2n}\left[\nabla^{L^p}_{e_j} \circ d\tilde{\rho}_{p}(e_j)+d\tilde{\rho}_{p}(e_j)\circ \nabla^{L^p}_{e_j}+d\tilde{\rho}_{p}(\nabla^{TX}_{e_j}e_j)  \right],\\ \label{e:Bp}
B_p=& -p\sum_{j=1}^{2n} d\tilde{\rho}_{p}(e_j)^2=-p\|d\tilde{\rho}_{p}\|^2.
\end{align}

\begin{lem}\label{l:1}
There exist $\alpha_1>0$ and $C>0$ such that, for any $p\in \NN$ and $|\alpha|<\alpha_1$, 
\[
\|f_{-\alpha,p}[P_{\mathcal H_p},f_{2\alpha,p}]f_{-\alpha,p}\|<C|\alpha|. 
\]
\end{lem}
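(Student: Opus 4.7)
The plan is to write the Schwartz kernel of $A_{p,\alpha}:=f_{-\alpha,p}[P_{\mathcal H_p},f_{2\alpha,p}]f_{-\alpha,p}$ explicitly and then bound its $L^2$-operator norm via Schur's test, using the off-diagonal exponential decay of the Bergman kernel $P_{\mathcal H_p}(x,y)$.

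Since $f_{\alpha,p}(x)=e^{\alpha\sqrt{p}\,\tilde\rho_p(x)}$, a direct computation of the two summands $f_{-\alpha,p}P_{\mathcal H_p}f_{2\alpha,p}f_{-\alpha,p}$ and $f_{-\alpha,p}f_{2\alpha,p}P_{\mathcal H_p}f_{-\alpha,p}$ gives
\[
A_{p,\alpha}(x,y) = P_{\mathcal H_p}(x,y)\bigl[e^{\alpha\sqrt{p}(\tilde\rho_p(y)-\tilde\rho_p(x))} - e^{-\alpha\sqrt{p}(\tilde\rho_p(y)-\tilde\rho_p(x))}\bigr].
\]
The estimate \eqref{dist} applied with $k=1$ yields a $p$-uniform Lipschitz bound $|\tilde\rho_p(x)-\tilde\rho_p(y)|\le c_1 d(x,y)$. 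Combining this with the elementary inequality $|e^t-e^{-t}|\le 2|t|e^{|t|}$, we obtain
\[
|A_{p,\alpha}(x,y)| \le 4c_1|\alpha|\sqrt{p}\,d(x,y)\,e^{c_1|\alpha|\sqrt{p}\,d(x,y)}\,|P_{\mathcal H_p}(x,y)|.
\]

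Next I would invoke the off-diagonal exponential decay of the generalized Bergman kernel from Ma--Marinescu theory (cf.\ \cite{ma-ma08a,bergman}): there exist $C,c>0$ such that $|P_{\mathcal H_p}(x,y)|\le Cp^n e^{-c\sqrt{p}\,d(x,y)}$ for all $p\in\NN$ and $x,y\in X$. Choosing $\alpha_1>0$ with $c_1\alpha_1<c/2$ and absorbing the polynomial factor $\sqrt{p}\,d(x,y)$ into a slightly slower Gaussian tail, we obtain, for $|\alpha|<\alpha_1$,
\[
|A_{p,\alpha}(x,y)| \le C'|\alpha|\,p^n e^{-(c/4)\sqrt{p}\,d(x,y)}.
\]

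Finally, a standard change of variables (rescaling $\sqrt{p}\,d(x,\cdot)$) together with the fact that a ball of radius $r/\sqrt{p}$ has volume $O(r^{2n}p^{-n})$ yields
\[
\sup_{x\in X}\int_X p^n e^{-(c/4)\sqrt{p}\,d(x,y)}\,dv_X(y) \le C'',
\]
and symmetrically in the other variable. Schur's test then gives $\|A_{p,\alpha}\|\le C|\alpha|$, as required. The main conceptual input is the sharp off-diagonal Bergman kernel estimate; the remaining work amounts to controlling the competition between the Gaussian decay of $P_{\mathcal H_p}$ and the exponential growth $e^{c_1|\alpha|\sqrt{p}\,d(x,y)}$ produced by the weight, which is precisely what forces the restriction to $|\alpha|<\alpha_1$.
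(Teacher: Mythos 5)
Your proof is correct, and it takes a genuinely different route from the paper's. The paper works at the operator level: it represents $P_{\mathcal H_p}$ through the contour integral \eqref{bergman-integral}, conjugates by $f_{\pm\alpha,p}$, applies the resolvent identity, and then invokes the weighted resolvent bounds of Theorem~\ref{Thm1.7W} together with the $p$-uniform boundedness of $\frac{1}{p}A_p$ (cf.\ \eqref{Lta}) to extract the factor $|\alpha|$. You instead work at the kernel level, observing that the Schwartz kernel of $f_{-\alpha,p}[P_{\mathcal H_p},f_{2\alpha,p}]f_{-\alpha,p}$ is exactly $P_{\mathcal H_p}(x,y)\bigl[e^{\alpha\sqrt{p}(\tilde\rho_p(y)-\tilde\rho_p(x))}-e^{-\alpha\sqrt{p}(\tilde\rho_p(y)-\tilde\rho_p(x))}\bigr]$, extracting the $O(|\alpha|)$ gain from $|e^t-e^{-t}|\le 2|t|e^{|t|}$ and the $p$-uniform Lipschitz bound \eqref{dist} with $k=1$, and closing via Schur's test. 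The computation and the Schur argument are sound, and the restriction $c_1\alpha_1<c/2$ correctly identifies where the admissible range of $\alpha$ comes from.

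The one caveat is the ingredient you label as "the main conceptual input": the uniform Agmon-type bound $|P_{\mathcal H_p}(x,y)|\le Cp^n e^{-c\sqrt{p}\,d(x,y)}$ for all $x,y\in X$ and all $p$. This is true in the present symplectic setting, but it is strictly stronger than the $O(p^{-\infty})$ off-diagonal decay for $d(x,y)>\varepsilon_0$ recorded in condition (ii) of Theorem~\ref{t:char}, and in the symplectic case its proof in the literature (\cite{Ko-ma-ma}, building on \cite{dai-liu-ma,ma-ma08a}) is itself obtained via weighted resolvent estimates of the same kind as Theorem~\ref{Thm1.7W}. So your route is not logically shorter: it imports, as a black box, a result whose derivation is essentially equivalent in difficulty to the weighted-resolvent machinery the paper deploys directly. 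What your approach buys is transparency --- the $|\alpha|$ factor is visible pointwise on the kernel rather than appearing through a resolvent-identity bookkeeping --- while the paper's approach stays self-contained within the tools it explicitly sets up and avoids needing the sharp pointwise kernel bound. Both are valid; if you were writing this up, you would want to cite the uniform exponential decay carefully and note that it does not immediately follow from Theorem~\ref{t:char}(ii) alone, since that criterion only covers $d(x,y)>\varepsilon_0$ and must be combined with the near-diagonal Gaussian expansion.
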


\begin{proof}
Observe that 
\begin{equation}\label{e:1}
f_{-\alpha,p}[P_{\mathcal H_p},f_{2\alpha,p}]f_{-\alpha,p}=f^{-1}_{\alpha,p}P_{\mathcal H_p}f_{\alpha,p}-f_{\alpha,p}P_{\mathcal H_p}f^{-1}_{\alpha,p}. 
\end{equation}
Let $\delta$ be the counterclockwise oriented circle in $\mathbb C$ centered at $0$ of radius $\mu_0$. We will use the formula
\begin{equation}\label{bergman-integral}
P_{\mathcal H_p}=\frac{1}{2\pi i}
\int_\delta \left(\lambda-\frac{1}{p}\Delta_p\right)^{-1}d\lambda. 
\end{equation}
By \cite[Theorem 2.3]{Ko-ma-ma} (see also \cite[Theorem 1.7]{ma-ma08}), we have the following theorem. 

\begin{thm}\label{Thm1.7}
For any $\lambda\in \delta$ and $p\in \mathbb N$, the operator $\lambda-\frac{1}{p}\Delta_p$ is invertible in $L^2(X,L^p)$, and there exists $C>0$ such that for all $\lambda\in \delta$ and $p\in \mathbb N$, 
\[
\left\|\left(\lambda-\frac{1}{p}\Delta_p\right)^{-1}\right\|^{0,0}_p\leq C, \quad 
\left\|\left(\lambda-\frac{1}{p}\Delta_p\right)^{-1}\right\|^{0,1}_p\leq C.
\]
\end{thm} 

As in \cite[Theorem 3.2]{Ko-ma-ma} (see also \cite{bergman}), we can derive the corresponding weighted estimates.

\begin{thm}\label{Thm1.7W}
There exist $\alpha_1>0$ and $C>0$ such that, for all $\lambda\in \delta$, $p\in \mathbb N$, $|\alpha|<\alpha_1$, the operator $\lambda-\frac{1}{p}\Delta_{p;\alpha}$ is invertible in $L^2(X,L^p)$, and, for the inverse operator $(\lambda-\frac{1}{p}\Delta_{p;\alpha})^{-1}$, we have
\begin{equation}\label{e:weighted-res}
\left\|(\lambda-\frac{1}{p}\Delta_{p;\alpha})^{-1}\right\|^{0,0}_p\leq C, \quad
 \left\|(\lambda-\frac{1}{p}\Delta_{p;\alpha})^{-1}\right\|^{0,1}_p\leq C.
\end{equation}
\end{thm}

It is easy to see that the inverse operators $\left(\lambda-\frac{1}{p}\Delta_{p;\alpha}\right)^{-1}$ and $\left(\lambda-\frac{1}{p}\Delta_p\right)^{-1}$ are related by the formula
\begin{equation}\label{e:res-relation}
\left(\lambda-\frac{1}{p}\Delta_{p;\alpha}\right)^{-1}=f_{\alpha,p}\left(\lambda-\frac{1}{p}\Delta_p\right)^{-1}f^{-1}_{\alpha,p}.
\end{equation}
By \eqref{bergman-integral}, it follows that
\begin{equation}\label{e:P-weight}
f_{\alpha,p}P_{\mathcal H_p}f^{-1}_{\alpha,p}=\frac{1}{2\pi i}
\int_\delta f_{\alpha,p}\left(\lambda-\frac{1}{p}\Delta_p\right)^{-1}f^{-1}_{\alpha,p}d\lambda. 
\end{equation}
Using the resolvent identity and \eqref{Lta}, we get:
\begin{multline}\label{e:est}
f_{\alpha,p}\left(\lambda-\frac{1}{p}\Delta_p\right)^{-1}f^{-1}_{\alpha,p}-f^{-1}_{\alpha,p}\left(\lambda-\frac{1}{p}\Delta_p\right)^{-1}f_{\alpha,p}\\=\left(\lambda-\frac{1}{p}\Delta_{p,\alpha}\right)^{-1}\left(\frac{1}{p}\Delta_{p,\alpha}-\frac{1}{p}\Delta_{p,-\alpha}\right)\left(\lambda-\frac{1}{p}\Delta_{p,-\alpha}\right)^{-1}\\ =\frac{2\alpha}{p} \left(\lambda-\frac{1}{p}\Delta_{p,\alpha}\right)^{-1}A_{p}\left(\lambda-\frac{1}{p}\Delta_{p,-\alpha}\right)^{-1}.
\end{multline}
Using \eqref{e:weighted-res} and \eqref{e:est}, we proceed as follows:
\begin{multline*}
\left\|f_{\alpha,p}\left(\lambda-\frac{1}{p}\Delta_p\right)^{-1}f^{-1}_{\alpha,p}-f^{-1}_{\alpha,p}\left(\lambda-\frac{1}{p}\Delta_p\right)^{-1}f_{\alpha,p} \right\|^{0,0}_p \\ \leq \frac{2|\alpha|}{p} \left\| \left(\lambda-\frac{1}{p}\Delta_{p,\alpha}\right)^{-1}\right\|^{0,0}_p \left\|A_{p}\left(\lambda-\frac{1}{p}\Delta_{p,-\alpha}\right)^{-1}\right\|^{0,0}_p\\ \leq  C|\alpha|\left\| \frac{1}{p} A_{p}\right\|^{1,0}_p\leq  C_1|\alpha|.
\end{multline*}
Taking into account \eqref{e:1} and \eqref{e:P-weight}, this completes the proof. 
\end{proof}

\begin{cor}\label{c:1}
There exists $C>0$ such that, for any $p\in \NN$ and $|\alpha|<\alpha_1$, 
\[
\|f_{\alpha,p}P_{\mathcal H_p}f_{-\alpha,p}\|<C. 
\]
\end{cor}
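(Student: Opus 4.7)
The plan is to reduce Corollary~\ref{c:1} to Lemma~\ref{l:1} by a short algebraic manipulation exploiting both the projector identity $P_{\mathcal H_p}^2=P_{\mathcal H_p}$ and the self-adjointness of $P_{\mathcal H_p}$. Set $A_\alpha=f_{\alpha,p}P_{\mathcal H_p}f_{-\alpha,p}$. Since $f_{\alpha,p}$ is multiplication by a real-valued function it is self-adjoint, and $P_{\mathcal H_p}$ is orthogonal, so $A_\alpha^\ast=f_{-\alpha,p}P_{\mathcal H_p}f_{\alpha,p}=A_{-\alpha}$. In particular $\|A_\alpha\|^2=\|A_\alpha^\ast A_\alpha\|=\|A_{-\alpha}A_\alpha\|$, so it suffices to control this last norm.

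Next, I would compute $A_{-\alpha}A_\alpha$ directly. Using $f_{\alpha,p}f_{\alpha,p}=f_{2\alpha,p}$ and $f_{-\alpha,p}f_{2\alpha,p}=f_{\alpha,p}$ together with $P_{\mathcal H_p}^2=P_{\mathcal H_p}$, one has
\[
A_{-\alpha}A_\alpha
=f_{-\alpha,p}P_{\mathcal H_p}f_{2\alpha,p}P_{\mathcal H_p}f_{-\alpha,p}
=f_{-\alpha,p}[P_{\mathcal H_p},f_{2\alpha,p}]P_{\mathcal H_p}f_{-\alpha,p}+A_\alpha.
\]
The key algebraic step is then to factor the first term by inserting $f_{\alpha,p}f_{-\alpha,p}=1$ between the commutator and the projector, obtaining
\[
f_{-\alpha,p}[P_{\mathcal H_p},f_{2\alpha,p}]P_{\mathcal H_p}f_{-\alpha,p}
=\bigl(f_{-\alpha,p}[P_{\mathcal H_p},f_{2\alpha,p}]f_{-\alpha,p}\bigr)A_\alpha,
\]
which exhibits the combination controlled by Lemma~\ref{l:1} as a left factor.

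Applying Lemma~\ref{l:1} to the factor in parentheses, there is a constant $C>0$ such that for $|\alpha|<\alpha_1$,
\[
\|A_{-\alpha}A_\alpha\|\leq (1+C|\alpha|)\,\|A_\alpha\|.
\]
Combining with $\|A_\alpha\|^2=\|A_{-\alpha}A_\alpha\|$ and canceling one factor of $\|A_\alpha\|$ (trivial when it vanishes) yields $\|A_\alpha\|\leq 1+C|\alpha|\leq 1+C\alpha_1$, which is the claimed uniform bound.

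The step I expect to be the only subtle one is recognizing the factorization in the second paragraph: the commutator $[P_{\mathcal H_p},f_{2\alpha,p}]$ by itself is not known to be small, and Lemma~\ref{l:1} provides a bound only when it is sandwiched between two $f_{-\alpha,p}$ factors. The trick of inserting $f_{\alpha,p}f_{-\alpha,p}=1$ at the correct location is exactly what turns the expression into (bounded-small)$\times A_\alpha$, closing the self-bounding inequality for $\|A_\alpha\|$. Everything else is formal.
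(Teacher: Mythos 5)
Your argument is correct, and it takes a genuinely different route from what the paper has in mind. Every step checks out: $A_\alpha^*=A_{-\alpha}$ since $f_{\pm\alpha,p}$ are real-valued multiplication operators and $P_{\mathcal H_p}$ is an orthogonal projection; the $C^*$-identity $\|A_\alpha\|^2=\|A_{-\alpha}A_\alpha\|$; the expansion of $P_{\mathcal H_p}f_{2\alpha,p}P_{\mathcal H_p}$ via the commutator together with $P_{\mathcal H_p}^2=P_{\mathcal H_p}$; the insertion of $f_{\alpha,p}f_{-\alpha,p}=1$ to produce the factor estimated in Lemma~\ref{l:1}; and the cancellation of $\|A_\alpha\|$ (harmless when it vanishes). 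This yields the explicit bound $\|A_\alpha\|\le 1+C|\alpha|$, which is cleaner than a mere qualitative constant.

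The paper gives no explicit proof of Corollary~\ref{c:1}; the intended derivation is almost certainly the one implicit in the proof of Lemma~\ref{l:1}: apply the contour representation \eqref{e:P-weight} together with \eqref{e:res-relation}, so that $f_{\alpha,p}P_{\mathcal H_p}f_{-\alpha,p}=\frac{1}{2\pi i}\int_\delta(\lambda-\tfrac1p\Delta_{p;\alpha})^{-1}\,d\lambda$, and then invoke the uniform resolvent bound of Theorem~\ref{Thm1.7W}. That route reaches back to the \emph{ingredients} of Lemma~\ref{l:1}'s proof, whereas your argument uses only the \emph{statement} of Lemma~\ref{l:1} plus elementary $C^*$-algebra. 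The trade-off: your derivation is self-contained given the lemma and produces the quantitative bound $1+C|\alpha|$ (so $\|A_\alpha\|\to 1$ as $\alpha\to 0$), while the paper's route is more direct once the weighted resolvent estimate is on the table and generalizes immediately to weighted bounds in higher Sobolev norms, which the operator-algebraic trick does not give for free.
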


\begin{lem}\label{l:2}
There exists $C>0$ such that, for any $p\in \NN$ and $\alpha$ such that $|\alpha|<\alpha_1$ with $\alpha_1$ as in Lemma~\ref{l:1}, we have
\[
\|f_{\alpha,p} [h,P_{\mathcal H_p}] f^{-1}_{\alpha,p}\|\leq \frac{C}{\sqrt{p}}. 
\]
\end{lem}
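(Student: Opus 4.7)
My plan is to mirror the strategy used in Lemma~\ref{l:1}, replacing the commutator $[P_{\mathcal H_p},f_{2\alpha,p}]$ with $[h,P_{\mathcal H_p}]$ and exploiting the differential nature of $h$. First, since $h$ is a smooth multiplication operator and $f_{\alpha,p}$ is scalar, they commute, so
\[
f_{\alpha,p}[h,P_{\mathcal H_p}]f^{-1}_{\alpha,p}=[h,f_{\alpha,p}P_{\mathcal H_p}f^{-1}_{\alpha,p}].
\]
By \eqref{bergman-integral}--\eqref{e:res-relation} the weighted projector is represented as the contour integral $f_{\alpha,p}P_{\mathcal H_p}f^{-1}_{\alpha,p}=\frac{1}{2\pi i}\int_{\delta}R_p(\lambda)\,d\lambda$ with $R_p(\lambda):=(\lambda-\tfrac{1}{p}\Delta_{p;\alpha})^{-1}$. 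The resolvent identity gives
\[
[h,R_p(\lambda)]=\tfrac{1}{p}R_p(\lambda)\,[h,\Delta_{p;\alpha}]\,R_p(\lambda),
\]
so it suffices to prove $\|[h,R_p(\lambda)]\|^{0,0}_p\leq C/\sqrt{p}$ uniformly in $\lambda\in\delta$ and $|\alpha|<\alpha_1$, and then integrate.

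The central step is to show that $\frac{1}{p}[h,\Delta_{p;\alpha}]$ is $\frac{1}{\sqrt{p}}$ times a family in $D^{1}(X,L^p)$ bounded in $p$. Using \eqref{e:DeltaLp1}, a direct computation yields
\[
[h,\Delta_p]=-\sum_{j=1}^{2n}\bigl(e_j(h)\nabla^{L^p}_{e_j}+\nabla^{L^p}_{e_j}\,e_j(h)\bigr),
\]
whose coefficients are bounded independently of $p$. Rewriting $\nabla^{L^p}_{e_j}=\sqrt{p}\cdot\tfrac{1}{\sqrt p}\nabla^{L^p}_{e_j}$ shows that $\frac{1}{p}[h,\Delta_p]$ is $\frac{1}{\sqrt{p}}$ times a bounded family in $D^{1}(X,L^p)$. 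For the perturbation $\alpha A_p+\alpha^2 B_p$ from \eqref{Lta}, one has $[h,B_p]=0$ since $B_p$ is multiplication by \eqref{e:Bp}, and from \eqref{e:Ap}
\[
[h,A_p]=-2\sqrt{p}\sum_{j=1}^{2n} e_j(h)\,d\tilde\rho_p(e_j),
\]
which, by \eqref{dist} applied with $k=1$, is multiplication by a function of sup-norm $O(\sqrt p)$; hence $\frac{\alpha}{p}[h,A_p]$ contributes $O(|\alpha|/\sqrt p)$ in the operator norm on $L^2$. Applying Proposition~\ref{p:Sobolev-mapping} I then get $\|\tfrac{1}{p}[h,\Delta_{p;\alpha}]\|^{1,0}_p\leq C/\sqrt{p}$.

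Finally, combining this with the uniform bounds $\|R_p(\lambda)\|^{0,0}_p\leq C$ and $\|R_p(\lambda)\|^{0,1}_p\leq C$ from Theorem~\ref{Thm1.7W}, the factorisation
\[
[h,R_p(\lambda)]=R_p(\lambda)\cdot\tfrac{1}{p}[h,\Delta_{p;\alpha}]\cdot R_p(\lambda)
\]
maps $L^2\to L^2\to H^{1}\to L^2$ and therefore has norm $\leq C/\sqrt{p}$ uniformly in $\lambda\in\delta$. Integrating over the compact contour $\delta$ yields the required estimate. The main obstacle is the scaling bookkeeping in the middle step: the factor $\sqrt{p}$ built into $A_p$ could in principle wipe out the $1/\sqrt{p}$ gain, and one must verify (as above) that commuting with $h$ removes the differentiation in $A_p$ and leaves only a bounded symbol, so the net contribution is still $O(1/\sqrt{p})$.
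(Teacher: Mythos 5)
Your proof follows exactly the paper's strategy: conjugate by $f_{\alpha,p}$ to reduce to $[h,f_{\alpha,p}P_{\mathcal H_p}f^{-1}_{\alpha,p}]$, pass to the contour integral, apply the resolvent identity, show $\sqrt{p}\,[h,\tfrac{1}{p}\Delta_{p;\alpha}]$ is a bounded family in $D^1(X,L^p)$, and then close via Proposition~\ref{p:Sobolev-mapping} and Theorem~\ref{Thm1.7W}. There are a few harmless slips (the sign of $[h,\Delta_p]$ is reversed, the bounded zero-order term $-\sum_j dh(\nabla^{TX}_{e_j}e_j)$ coming from $\nabla^{L^p}_{\nabla^{TX}_{e_j}e_j}$ is dropped, and the mapping chain should read $L^2\to H^1\to L^2\to L^2$), but none affects the final bound; this is essentially the paper's proof.
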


\begin{proof}
By \eqref{e:res-relation} and \eqref{e:P-weight}, we can write 
\begin{multline}\label{e:271}
f_{\alpha,p} [h,P_{\mathcal H_p}] f^{-1}_{\alpha,p}= [h,f_{\alpha,p}P_{\mathcal H_p}f^{-1}_{\alpha,p}]\\ =\frac{1}{2\pi i}
\int_\delta \left[h, \left(\lambda-\frac{1}{p}\Delta_{p,\alpha}\right)^{-1}\right] d\lambda. 
\end{multline}
We clearly have
\begin{multline}\label{e:272}
 \left[h, \left(\lambda-\frac{1}{p}\Delta_{p,\alpha}\right)^{-1}\right]\\ =\left(\lambda-\frac{1}{p}\Delta_{p,\alpha}\right)^{-1}\left[h, \frac{1}{p}\Delta_{p,\alpha}\right] \left(\lambda-\frac{1}{p}\Delta_{p,\alpha}\right)^{-1}.
\end{multline}
We use the formula \eqref{Lta} to compute the commutator $\left[h, \frac{1}{p}\Delta_{p,\alpha}\right]$:
\[
\left[h, \frac{1}{p}\Delta_{p,\alpha}\right]=\left[h, \frac{1}{p} \Delta_{p}\right]+\alpha\left[h, \frac{1}{p}A_{p}\right].  
\]
Using the formulas \eqref{e:DeltaLp1} and \eqref{e:Ap}, one can compute that, if $\{e_j\}_{j=1,\ldots,2n}$ is a local orthonormal frame of $TX$, then 
\begin{align}
\label{e:DeltaLph}
\left[h, \frac{1}{p} \Delta_{p}\right]=& \frac{1}{p}\sum_{j=1}^{2n}\left[dh(e_j) \circ \nabla^{L^p}_{e_j}+ \nabla^{L^p}_{e_j}\circ dh(e_j) -dh(\nabla^{TX}_{e_j}e_j) \right]\\
\label{e:Aph}
\left[h, \frac{1}{p}A_{p}\right] =& -\frac{2}{\sqrt{p}}  \sum_{j=1}^{2n} dh(e_j)\cdot  d\tilde{\rho}_{p}(e_j)=-\frac{2}{\sqrt{p}}  (dh,  d\tilde{\rho}_{p})_{g^{T^*X}}.
\end{align}
By \eqref{e:DeltaLph} and \eqref{e:Aph}, we see that, for any $\alpha$, the family $\sqrt{p}\left[h, \frac{1}{p}\Delta_{p,\alpha}\right]$ is a bounded family in $D^1(X,L^p)$. By Proposition \ref{p:Sobolev-mapping}, it follows that 
\[
\left\| \left[h, \frac{1}{p}\Delta_{p,\alpha}\right]\right\|^{1,0}_p\leq  C_1\frac{1}{\sqrt{p}}.
\]
By this estimate, \eqref{e:271}, \eqref{e:272} and Theorem~\ref{Thm1.7W}, we immediately complete the proof. 
\end{proof}

Now we are ready to prove Proposition~\ref{p:weighted-est}. 

\begin{proof}[Proof of Proposition~\ref{p:weighted-est}]
Using \eqref{e:eigenT}, we get
\begin{multline}\label{e:p2.2}
(f_{\alpha,p}hu_p, f_{\alpha,p}u_p)\\
\begin{aligned}
=& (P_{\mathcal H_p}f_{2\alpha,p}hu_p,u_p)\\
= &(f_{\alpha,p}P_{\mathcal H_p}hu_p,f_{\alpha,p}u_p)+([P_{\mathcal H_p},f_{2\alpha,p}]hu_p,u_p)\\
= &\lambda_p\|f_{\alpha,p}u_p\|^2- (f_{\alpha,p}(T_p-P_{\mathcal H_p}hP_{\mathcal H_p})u_p,f_{\alpha,p}u_p)
\end{aligned}
\\ +([P_{\mathcal H_p},f_{2\alpha,p}]hu_p,u_p).
\end{multline}
The kernel $K_{\alpha,p}\in C^\infty(X\times X)$ of the operator $f_{\alpha,p}(T_p-P_{\mathcal H_p}hP_{\mathcal H_p})f_{-\alpha,p}$ is given by 
\[
K_{\alpha,p}(x,y)=e^{\alpha\sqrt{p}\tilde \rho_p(x)}(T_p-P_{\mathcal H_p}hP_{\mathcal H_p})(x,y)e^{-\alpha\sqrt{p}\tilde \rho_p(y)}.
\]
Using \eqref{(1.1)}, \eqref{e:Lip} and \eqref{e:expT-h}, one can show that there exists $C>0$ such that, for any $p\in \NN$ and $(x,y)\in X\times X$, 
\[
|K_{\alpha,p}(x,y)|\leq Cp^{-1}e^{(L|\alpha|-a)\sqrt{p}d(x,y)}.
\]
Therefore, if we take $|\alpha|<a/L$, then, for any $p\in \NN$ and $(x,y)\in X\times X$, 
\[
|K_{\alpha,p}(x,y)|\leq Cp^{-1},
\]
and, by Schur's lemma, there exists $C_3>0$ such that, for any $p\in \NN$, 
\begin{equation}\label{e:T-PhP}
\|f_{\alpha,p}(T_p-P_{\mathcal H_p}hP_{\mathcal H_p})f_{-\alpha,p}\|\leq C_3p^{-1}, \quad p\in \NN.
\end{equation}
By \eqref{e:T-PhP}, we get the bound for the second term in the right-hand side of \eqref{e:p2.2}: 
\[
|(f_{\alpha,p}(T_p-P_{\mathcal H_p}hP_{\mathcal H_p})u_p,f_{\alpha,p}u_p)|\leq \frac{C_3}{p}\|f_{\alpha,p}u_p\|^2,  \quad p\in \NN.
\]

Put $\alpha_0=\min(a/L,\alpha_1)$, where $\alpha_1$ is given by Lemma~\ref{l:1}, and assume that $|\alpha|<\alpha_0$.
For the third term in the right-hand side of \eqref{e:p2.2}, we proceed as follows:
\begin{multline*}
([P_{\mathcal H_p},f_{2\alpha,p}]hu_p,u_p)\\
\begin{aligned}
=&([P_{\mathcal H_p},f_{2\alpha,p}] P_{\mathcal H_p} hu_p,u_p)+([P_{\mathcal H_p},f_{2\alpha,p}](I-P_{\mathcal H_p})hu_p,u_p)\\
=&\lambda_p([P_{\mathcal H_p},f_{2\alpha,p}]u_p,u_p)+([P_{\mathcal H_p},f_{2\alpha,p}] (T_p-P_{\mathcal H_p} hP_{\mathcal H_p})u_p,u_p)\\&+([P_{\mathcal H_p},f_{2\alpha,p}][h,P_{\mathcal H_p}] u_p,u_p).
\end{aligned}
\end{multline*}
Using Lemma~\ref{l:1}, Lemma \ref{l:2} and \eqref{e:T-PhP}, for any $p\in \NN$, we get the estimate 
\[
|([P_{\mathcal H_p},f_{2\alpha,p}]hu_p,u_p)|
\leq C|\alpha|\lambda_p \|f_{\alpha,p} u_p\|^2+\frac{C_2|\alpha|}{\sqrt{p}}\|f_{\alpha,p}u_p\|^2,
\]
that immediately completes the proof. 
\end{proof}

\subsection{Tunneling estimates}\label{s:tunnel}
Here we prove Theorem~\ref{t:forbidden}, the result on exponential decay of the eigensections in the classically forbidden region.

Let $\rho(x)=d(x,U_{h_0})$. This is a Lipschitz function, so we can construct a sequence $\tilde \rho_p\in C^\infty(X), p\in \NN,$ satisfying \eqref{(1.1)} and \eqref{dist}. By Proposition~\ref{p:weighted-est}, it follows that, for any $M\in (h_1, h_0)$, there exists $\alpha>0$ such that for any $p\in \NN$,
\begin{equation}\label{e:ineq00}
\int_X e^{2\alpha\sqrt{p}\tilde \rho_p(x)}\left(h(x)-M\right)|u_p(x)|^2 dv_X(x)\leq 0.
\end{equation}
Now the proof is completed as in \cite{HelSj1}. For convenience of the reader, we recall the arguments.

We rewrite \eqref{e:ineq00} in the form
\begin{multline}\label{e:ineq}
\int_{X\setminus U_{h_0}} e^{2\alpha\sqrt{p}\tilde \rho_p(x)}\left(h(x)-M\right)|u_p(x)|^2 dv_X(x)\\ \leq \int_{U_{h_0}} e^{2\alpha\sqrt{p}\tilde \rho_p(x)}\left(M-h(x)\right)|u_p(x)|^2 dv_X(x).
\end{multline}
For the left-hand side of \eqref{e:ineq},  since $h(x)\geq h_0$ on $X\setminus U_{h_0}$, we get
\begin{multline}\label{e:ineq31}
\int_{X\setminus U_{h_0}} e^{2\alpha\sqrt{p}\tilde \rho_p(x)}\left(h(x)-M\right)|u_p(x)|^2 dv_X(x)\\ \geq (h_0-M) \int_{X\setminus U_{h_0}} e^{2\alpha\sqrt{p}\tilde \rho_p(x)}|u_p(x)|^2 dv_X(x).
\end{multline}
If $x\in U_{h_0}$, then $\rho(x)=0$ and, by \eqref{(1.1)}, $\tilde{\rho}_p(x)<c_0/\sqrt{p}$. Hence, we have
\begin{multline}\label{e:ineq30}
\int_{U_{h_0}} e^{2\alpha\sqrt{p}\tilde \rho_p(x)}|u_p(x)|^2 dv_X(x)\\ \leq e^{2c_0\alpha} \int_{U_{h_0}}|u_p(x)|^2 dv_X(x)\leq e^{2c_0\alpha} \int_{X}|u_p(x)|^2 dv_X(x).
\end{multline}
Using \eqref{e:ineq30} and the fact that $h$ is bounded, for the right-hand side of \eqref{e:ineq}, we get the estimate
\begin{multline}\label{e:ineq1}
\int_{U_{h_0}} e^{2\alpha\sqrt{p}\tilde \rho_p(x)}\left(M-h(x)\right)|u_p(x)|^2 dv_X(x)\\ \leq C \int_{U_{h_0}} e^{2\alpha\sqrt{p}\tilde \rho_p(x)}|u_p(x)|^2 dv_X(x)  \leq C_1 \int_{X}|u_p(x)|^2 dv_X(x).
\end{multline}
By \eqref{e:ineq31}, \eqref{e:ineq} and \eqref{e:ineq1}, it follows that
\begin{equation}\label{e:ineq2}
\int_{X\setminus U_{h_0}} e^{2\alpha\sqrt{p}\tilde \rho_p(x)}|u_p(x)|^2 dv_X(x) \leq C_2 \int_X|u_p(x)|^2 dv_X(x).
\end{equation}
Finally, by \eqref{e:ineq30} and \eqref{e:ineq2}, we infer that, for $p>p_0$, 
\[
\int_X e^{2\alpha\sqrt{p}\tilde \rho_p(x)}|u_p(x)|^2 dv_X(x) \leq C_2 \int_X|u_p(x)|^2 dv_X(x),
\]
 that completes the proof of Theorem~\ref{t:forbidden}.

\subsection{Low-lying eigenvalues} In this section, we study localization properties of eigensections, corresponding to low-lying eigenvalues, under the assumption that the minimum of the principal symbol is non-degenerate. 

First, we prove Theorem~\ref{low-l0}. 

\begin{proof}[Proof of Theorem~\ref{low-l0}]
The proof is obtained by a slight modification of the proof of \cite[Proposition 3.1]{deleporte1}. So we will be sketchy. 

First, we show that, for any $k\in \NN$, the operator $T_p^k$ has the form
\begin{equation}\label{e:Tpk}
T_p^k=P_{\cH_p} \left(h^k+\sum_{j=1}^{k-1}p^{-j}f_{j,k}+\cO(p^{-k})\right) P_{\cH_p},
\end{equation}
where, for any $k$ and $j=1,\ldots,k-1$, $f_{j,k}\in C^\infty(X)$ and there exists $C_{jk}>0$ such that 
\begin{equation}\label{e:fjk}
|f_{j,k}(x)|<C_{jk}h(x)^{k-j},\quad  x\in X. 
\end{equation}

Iterating \eqref{e:compT}, one can see that, for any $f_1,\ldots,f_k\in C^\infty(X)$, the product of the Toeplitz operators $T_{f_1,p} \ldots T_{f_k,p}$ is a Toeplitz operator, which admits the asymptotic expansion 
\begin{equation}\label{e:compTk}
T_{f_1,p} \ldots T_{f_k,p}\\ =P_{\cH_p} \left(\sum_{r=0}^{k-1}p^{-r}C_{r,k}(f_1,\ldots,f_k)+\cO(p^{-k})\right) P_{\cH_p}, 
\end{equation}
with some $C_{r,k}(f_1,\ldots,f_k)\in C^\infty(X)$, where each $C_{r,k}$, $r=0,\ldots,k-1,$ is a $k$-multilinear differential operator of order at most $2r$ and $C_{0,k}(f_1,\ldots,f_k)=f_1\ldots f_k$. Using \eqref{e:nondeg0}, one can show (cf. \cite{deleporte1}) that, if at least $l$ of $f_1,\ldots,f_k$ are equal to $h$, then 
\begin{equation}\label{e:Crk}
|C_{r,k}(f_1,\ldots,f_k)|\leq Ch^{l-r}.
\end{equation}
On the other hand, using the asymptotic expansion \eqref{e:Tp} for $T_p$, one can see that each coefficient $f_{j,k}$ in \eqref{e:Tpk} is a linear combination of terms of the form $C_{r,k}(f_1,\ldots,f_k)$, $r\leq j$, where each $f_m$, $m=1,\ldots,k$, equals $g_r$ with some $r=r_m\in \NN$ and at least $k-j+r$ of $f_1,\ldots,f_k$ are equal to $g_0=h$. By \eqref{e:Crk}, this implies \eqref{e:fjk}.

Next, we prove by induction on $k$ that. for any $c>0$ and $k\in \NN$, there exists $C>0$ such that for any $p\in \NN$ and $u_p\in \mathcal H_p$, satisfying \eqref{e:Tp1} and \eqref{e:lp1},
\begin{equation}\label{e:hk}
|(u_p,h^ku_p)|\leq Cp^{-k}. 
\end{equation}
For $k=1$, we have 
\begin{multline*}
(u_p,hu_p)=(u_p,T_pu_p)-(u_p,(T_p-P_{\cH_p}hP_{\cH_p})u_p)\\ =\lambda_p+(u_p,\cO(p^{-1})u_p)=\cO(p^{-1}). 
\end{multline*}
The induction step goes exactly as in the proof of \cite[Proposition 3.1]{deleporte1}, using \eqref{e:Tpk} and \eqref{e:fjk}, that proves \eqref{e:hk}.

Finally, if $x\not\in V_\delta$, then $d(x, U_0)>\delta$ and, by \eqref{e:nondeg0},
$h(x)\geq \alpha \delta^{2}$. Therefore, we have
\begin{multline}\label{e:hk2}
(u_p,h^ku_p)=\int_X h(x)^k |u_p(x)|^2dv_X(x) \geq \int_{X\setminus V_\delta} h(x)^k |u_p(x)|^2dv_X(x)\\ \geq \alpha^k \delta^{2k}\int_{X\setminus V_\delta} |u_p(x)|^2dv_X(x).
\end{multline}
From \eqref{e:hk} and \eqref{e:hk2}, we get \eqref{e:minusVdelta}.
\end{proof}

In the end of this section, we establish some exponential decay estimates for eigensections, corresponding to low-lying eigenvalues, using the methods of Section~\ref{s:tunnel}. As mentioned above, this result turns out to be insufficient for our applications. 

So let $\{T_p\}$ be a self-adjoint Toeplitz operator \eqref{e:Tp}, satisfying the condition \eqref{e:minh=0}. Here we assume more generally that, for some $k\geq 1$, there exists $C>0$ such that
\begin{equation}\label{e:nondeg}
h(x)\geq Cd(x,U_0)^{2k}, \quad x\in X.
\end{equation}

\begin{thm}\label{low-l}
Suppose that $\lambda_p$ is a sequence of eigenvalues of $T_{p}$, satisfying the estimate 
\[
\lambda_p<\frac{C_0}{p^{2k/(2k+1)}}, \quad p\in \NN,
\] 
with some $C_0>0$, independent of $p$, and $u_p\in \mathcal H_p$ is the corresponding normalized eigensection:
\[
T_{p}u_p=\lambda_pu_p, \quad \|u_p\|=1.
\]
For any $c<\alpha_0$, there exists $C_1>0$ such that, for any $p\in \NN$,
\[
\int_X e^{2cp^{1/(2k+1)}d(x,U_0)}|u_p(x)|^2 dv_X(x)<C_1.
\]
\end{thm}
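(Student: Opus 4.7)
The plan is to apply Proposition~\ref{p:weighted-est} with the Lipschitz function $\rho(x)=d(x,U_0)$, but with a $p$-dependent choice of $\alpha$. Write $\alpha = \alpha_p := c\,p^{1/(2k+1)-1/2}$ so that $\alpha_p\sqrt{p} = c\,p^{1/(2k+1)}$, matching the weight appearing in the statement. For $k\geq 1$ we have $1/(2k+1)-1/2 < 0$, so $\alpha_p\to 0$ and the hypothesis $|\alpha_p|<\alpha_0$ is satisfied for all $p$ large enough. Proposition~\ref{p:weighted-est} then yields
\[
\int_X e^{2c\,p^{1/(2k+1)}\tilde{\rho}_p(x)}\Phi_p(x)\,|u_p(x)|^2\,dv_X(x)\leq 0,
\]
where $\Phi_p(x)=h(x)-(1+C_1|\alpha_p|)\lambda_p-\frac{C_2|\alpha_p|}{\sqrt{p}}-\frac{C_3}{p}$ and $\tilde{\rho}_p$ is a smoothing of $d(\cdot,U_0)$ satisfying \eqref{(1.1)}.

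The next step is to control the error terms. Using the hypothesis $\lambda_p < C_0 p^{-2k/(2k+1)}$, and noting that $|\alpha_p|/\sqrt{p} = c\,p^{1/(2k+1)-1} = c\,p^{-2k/(2k+1)}$, all three subtracted terms are bounded above by $\tilde{C}\,p^{-2k/(2k+1)}$ for some constant $\tilde{C}$ and $p$ large. Now invoke the non-degeneracy assumption \eqref{e:nondeg}: choose $M>0$ so that $CM^{2k}\geq 2\tilde{C}$, and set $V_p=\{x\in X:d(x,U_0)<M p^{-1/(2k+1)}\}$. On $X\setminus V_p$, we have $h(x)\geq CM^{2k}p^{-2k/(2k+1)}\geq 2\tilde{C}\,p^{-2k/(2k+1)}$, so $\Phi_p(x)\geq \tilde{C}\,p^{-2k/(2k+1)}>0$. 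On $V_p$, the weight is controlled since $\tilde{\rho}_p(x)\leq M p^{-1/(2k+1)}+c_0/\sqrt{p}$, hence $e^{2c\,p^{1/(2k+1)}\tilde{\rho}_p(x)}\leq e^{2cM}e^{2cc_0 p^{1/(2k+1)-1/2}}\leq C'$.

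Splitting the integral along $V_p$ and $X\setminus V_p$ and using $\Phi_p\geq -\tilde{C}p^{-2k/(2k+1)}$ on $V_p$, the factor $p^{-2k/(2k+1)}$ cancels and one obtains
\[
\tilde{C}\int_{X\setminus V_p} e^{2c\,p^{1/(2k+1)}\tilde{\rho}_p(x)}\,|u_p(x)|^2\,dv_X(x)\leq \tilde{C}\,C'\int_{V_p}|u_p(x)|^2\,dv_X(x)\leq \tilde{C}\,C',
\]
since $\|u_p\|=1$. Adding the bounded $V_p$ contribution gives a uniform bound on $\int_X e^{2c\,p^{1/(2k+1)}\tilde{\rho}_p(x)}|u_p|^2\,dv_X$. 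Finally, \eqref{(1.1)} lets one replace $\tilde{\rho}_p$ by $d(\cdot,U_0)$ at the cost of the bounded factor $e^{-2cc_0 p^{1/(2k+1)-1/2}}$, yielding the claim.

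The main obstacle I anticipate is the bookkeeping in matching all the scales: the weight scale $p^{1/(2k+1)}$, the eigenvalue scale $p^{-2k/(2k+1)}$, and the error scale $|\alpha_p|/\sqrt{p}$ all need to align so that the subtracted terms in $\Phi_p$ are dominated by $h$ outside the shrinking neighborhood $V_p$. This alignment is what fixes the exponent $1/(2k+1)$ and the degree of semiclassical decay one can hope to prove; any mismatch would either destroy positivity of $\Phi_p$ on $X\setminus V_p$ or push the radius of $V_p$ to a scale where the bounded weight argument fails.
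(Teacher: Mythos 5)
The proposal is correct and takes exactly the same route as the paper: the paper applies Proposition~\ref{p:weighted-est} with $\alpha=cp^{-(2k-1)/(4k+2)}$, which is identical to your $\alpha_p=c\,p^{1/(2k+1)-1/2}$, and then states that the proof ``is completed similar to the proof of Theorem~\ref{t:forbidden},'' which is precisely the split along a shrinking neighborhood of $U_0$ and the weight-boundedness argument you carry out. (Two cosmetic remarks: the hypothesis $c<\alpha_0$ guarantees $|\alpha_p|\leq c<\alpha_0$ for \emph{all} $p\geq 1$, not just large $p$, since $\alpha_p$ is decreasing in $p$; and the replacement of $\tilde\rho_p$ by $d(\cdot,U_0)$ in the final step costs the bounded factor $e^{+2cc_0 p^{1/(2k+1)-1/2}}$, not a factor with negative exponent.)
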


\begin{proof} Let $\rho(x)=d(x,U_0)$. Take a sequence $\tilde \rho_p\in C^\infty(X), p\in \NN,$ satisfying \eqref{(1.1)} and \eqref{dist}. We apply 
Proposition~\ref{p:weighted-est} with $\alpha=cp^{-(2k-1)/(4k+2)}$ for the given $c>0$. We conclude that 
there exists $M>0$ such that, for any $p\in \NN$, 
\begin{equation}\label{e:ineq01}
\int_X e^{2cp^{1/(2k+1)}\tilde \rho_p(x)}\left(h(x)-\frac{M}{p^{2k/(2k+1)}}\right)|u_p(x)|^2 dv_X(x)\leq 0.
\end{equation}

Now the proof is completed similar to the proof of Theorem~\ref{t:forbidden} and we omit it. 
\end{proof}

\section{Asymptotic expansions of low-lying eigenvalues}\label{s:3}

This section is devoted to the proofs of Theorems~\ref{t:DelB} and \ref{t:DelA}.

\subsection{Characterization of Toeplitz operators}\label{s:char}
In this section, we recall the description of Toeplitz operators in terms of their Schwartz kernels introduced in \cite{ma-ma:book,ma-ma08}. 

First, we introduce normal coordinates near an arbitrary point $x_0\in X$, which we fix. Let $a^X$ be the injectivity radius of $(X,g)$. We will identify $B^{T_{x_0}X}(0,a^X)$ with $B^{X}(x_0,a^X)$ by the exponential map $\operatorname{exp}^X : T_{x_0}X\to X$. For $Z\in B^{T_{x_0}X}(0,a^X)$ we identify $L_Z$ to $L_{x_0}$ by parallel transport with respect to the connection $\nabla^L$ along the curve $\gamma_Z : [0,1]\ni u \to \exp^X_{x_0}(uZ)$. Consider the line bundle $L_0$ with fibers $L_{x_0}$ on $T_{x_0}X$. Denote by $\nabla^L$, $h^L$ the connection and the metric on the restriction of $L_0$ to $B^{T_{x_0}X}(0,a^X)$ induced by the identification $B^{T_{x_0}X}(0,a^X)\cong B^{X}(x_0,a^X)$ and the trivialization of $L$ over $B^{T_{x_0}X}(0,a^X)$. 

Let $dv_{TX}$ be the Riemannian volume form of $(T_{x_0}X, g^{T_{x_0}X})$ and $dv_X$ the volume form on $B^{T_{x_0}X}(0,a^X)$, corresponding to the Riemannian volume form $dv_X$ on $B^{X}(x_0,a^X)$ under identification $B^{T_{x_0}X}(0,a^X)\cong B^{X}(x_0,a^X)$. Let $\kappa_{x_0}$ be the smooth positive function on $B^{T_{x_0}X}(0,a^X)$ defined by the equation
\[
dv_{X}(Z)=\kappa_{x_0}(Z)dv_{TX}(Z), \quad Z\in B^{T_{x_0}X}(0,a^X). 
\] 

The almost complex structure $J_{x_0}$ defined in \eqref{e:defJ} induces a splitting $T_{x_0}X\otimes_{\mathbb R}\mathbb C=T^{(1,0)}_{x_0}X\oplus T^{(0,1)}_{x_0}X$, where $T^{(1,0)}_{x_0}X$ and $T^{(0,1)}_{x_0}X$ are the eigenspaces of $J_{x_0}$ corresponding to eigenvalues $i$ and $-i$ respectively. Denote by $\det_{\mathbb C}$ the determinant function of the complex space $T^{(1,0)}_{x_0}X$. Put
\[
\mathcal J_{x_0}=-2\pi i J_0.
\]
Then $\mathcal J_{x_0} : T^{(1,0)}_{x_0}X\to T^{(1,0)}_{x_0}X$ is positive, and $\mathcal J_{x_0} : T_{x_0}X\to T_{x_0}X$ is skew-adjoint. 
Put (see \cite{ma-ma:book,ma-ma08})
\begin{multline} \label{e:defP0}
\mathcal P(Z, Z^\prime)\\ =\frac{\det_{\mathbb C}\mathcal J_{x_0}}{(2\pi)^n}\exp\left(-\frac 14\langle (\mathcal J^2_{x_0})^{1/2}(Z-Z^\prime), (Z-Z^\prime)\rangle +\frac 12 \langle \mathcal J_{x_0} Z, Z^\prime \rangle \right).
\end{multline}
It is the Bergman kernel of the operator $\mathcal L_{x_0}$ on $C^\infty(T_{x_0}X)$ given by \eqref{e:defL0}, that is, the smooth kernel with respect to $dv_{TX}(Z)$ of the orthogonal projection $\mathcal P=\mathcal P_{x_0}$ in $L^2(T_{x_0}X)$ to the kernel of $\mathcal L_{x_0}$.

We choose an orthonormal basis $\{w_j : j=1,\ldots,n\}$ of $T^{(1,0)}_{x_0}X$, consisting of eigenvectors of $\mathcal J_{x_0}$:
\[
\mathcal J_{x_0}w_j=a_jw_j,  \quad j=1,\ldots,n,
\]
with some $a_j>0$.
Then $e_{2j-1}=\frac{1}{\sqrt{2}}(w_j+\bar w_j)$ and $e_{2j}=\frac{i}{\sqrt{2}}(w_j-\bar w_j)$, $j=1,\ldots,n$, form an orthonormal basis of $T_{x_0}X$. We use this basis to define the coordinates $Z$ on $T_{x_0}X\cong \mathbb R^{2n}$ as well as the complex coordinates $z$ on $\mathbb C^{n} \cong \mathbb R^{2n}$, $z_j=Z_{2j-1}+iZ_{2j}, j=1,\ldots,n$.  In this coordinates, we get
\begin{equation}\label{e:defP} 
\mathcal P(Z, Z^\prime)=\frac{1}{(2\pi)^n}\prod_{j=1}^na_j \exp\left(-\frac 14\sum_{k=1}^na_k(|z_k|^2+|z_k^\prime|^2- 2z_k\bar z_k^\prime) \right).
\end{equation}

Let $\{\Xi_p\}_{p\in \mathbb N}$ be a sequence  of linear operators $\Xi_p : L^2(X,L^p)\to L^2(X,L^p)$ with smooth kernel $\Xi_p(x,x^\prime)$ with respect to $dv_X$. Under our trivialization, $\Xi_p(x,x^\prime)$ induces a smooth function $\Xi_{p,x_0}(Z,Z^\prime)$ on the set of all $Z,Z^\prime\in T_{x_0}X$ with $x_0\in X$ and $|Z|, |Z^\prime|<a_X$.  

\begin{defn}[\cite{ma-ma:book,ma-ma08}]
We say that 
\[
p^{-n}\Xi_{p,x_0}(Z,Z^\prime)\cong \sum_{r=0}^k(Q_{r,x_0}\mathcal P_{x_0})(\sqrt{p}Z,\sqrt{p}Z^\prime)p^{-\frac{r}{2}}+\mathcal O(p^{-\frac{k+1}{2}})
\]
with some $Q_{r,x_0}\in \mathbb C[Z,Z^\prime]$, $0\leq r\leq k$,  depending smoothly on the parameter $x_0\in X$, if there exist $\varepsilon\in (0,a_X]$ and $C_0>0$ with the following property:
for any $l\in \mathbb N$, there exist $C>0$ and $M>0$ such that for any $x_0\in X$, $p\geq 1$ and $Z,Z^\prime\in T_{x_0}X$, $|Z|, |Z^\prime|<\varepsilon$, we have 
\begin{multline*}
\Bigg|p^{-n}\Xi_{p,x_0}(Z,Z^\prime)\kappa_{x_0}^{\frac 12}(Z)\kappa_{x_0}^{\frac 12}(Z^\prime) -\sum_{r=0}^k(Q_{r,x_0}\mathcal P_{x_0})(\sqrt{p} Z, \sqrt{p}Z^\prime)p^{-\frac{r}{2}}\Bigg|_{\mathcal C^{l}(X)}\\ 
\leq Cp^{-\frac{k+1}{2}}(1+\sqrt{p}|Z|+\sqrt{p}|Z^\prime|)^M\exp(-\sqrt{C_0p}|Z-Z^\prime|)+\mathcal O(p^{-\infty}).
\end{multline*}
\end{defn}

Observe that if $P_{p,x_0}(Z,Z^\prime)$ denotes the Schwartz kernel of the Bergman projection $P_{\mathcal H_p}$ in the normal coordinates near $x_0\in X$, then, by \cite[Theorem 1.1]{bergman}, for any $k\in \mathbb N$, 
\[
p^{-n}P_{p,x_0}(Z,Z^\prime)\cong
\sum_{r=0}^k(J_{r,x_0}\mathcal P_{x_0})(\sqrt{p} Z, \sqrt{p}Z^\prime)p^{-\frac{r}{2}}+\mathcal O(p^{-\frac{k+1}{2}}),
\]
$J_{r,x_0}(Z,Z^\prime)$ are polynomials in $Z, Z^\prime$, depending smoothly on $x_0$, with the same parity as $r$ and $\operatorname{deg} J_{r,x_0}\leq 3r$. This estimate was introduced in \cite{dai-liu-ma} for the spin$^c$ Dirac operator, see also \cite{ma-ma:book,ma-ma08} for the K\"ahler case and \cite{LMM} for the renormalized Bochner Laplacian. 

Let $f\in C^\infty(X)$. The Schwartz kernel of $T_{f,p}$ is given by 
\[
T_{f,p}(x,x^\prime)=\int_X P_p(x,x^{\prime\prime})f(x^{\prime\prime})P_p(x^{\prime\prime},x^{\prime})dv_X(x^{\prime\prime}). 
\] 
Therefore, for any $x_0\in X$ and $k\in \mathbb N$, we have \cite[Lemma 6.4]{bergman} (see also \cite{ma-ma:book,ma-ma08} for the spin$^c$ Dirac operator and the K\"ahler case and \cite{ioos-lu-ma-ma} for the renormalized Bochner Laplacian)
\[
p^{-n}T_{f,p,x_0}(Z,Z^\prime)\cong \sum_{r=0}^k(Q_{r,x_0}(f)\mathcal P_{x_0})(\sqrt{p}Z,\sqrt{p}Z^\prime)p^{-\frac{r}{2}}+\mathcal O(p^{-\frac{k+1}{2}}),
\]
where the polynomials $Q_{r,x_0}(f)(Z,Z^\prime)$ have the same parity as $r$. 

The coefficients $Q_{r,x_0}(f)$ are computed explicitly in \cite{ma-ma:book,ma-ma08} as follows. For any polynomial $F\in \mathbb C[Z,Z^\prime]$, denote by $F\mathcal P_{x_0}$ the operator in $L^2(T_{x_0}X)$ with smooth kernel $(F\mathcal P_{x_0})(Z,Z^\prime)$.  For polynomials $F,G\in \mathbb C[Z,Z^\prime]$, define the polynomial $\mathcal K[F,G]\in \mathbb C[Z,Z^\prime]$ by the condition
\[
((F\mathcal P_{x_0})\circ (G\mathcal P_{x_0}))(Z,Z^\prime)=(\mathcal K[F,G]\mathcal P_{x_0})(Z,Z^\prime), 
\]
where $((F\mathcal P_{x_0})\circ (G\mathcal P_{x_0}))(Z,Z^\prime)$ is the smooth kernel of the composition $(F\mathcal P_{x_0})\circ (G\mathcal P_{x_0})$ of the operators $F\mathcal P_{x_0}$ and $G\mathcal P_{x_0}$ in $L^2(T_{x_0}X)$.

Then we have
\[
Q_{r,x_0}(f)=\sum_{r_1+r_2+|\alpha|=r}\mathcal K\left[J_{r_1,x_0}, \frac{\partial^\alpha f_{x_0}}{\partial Z^\alpha}(0)\frac{Z^\alpha}{\alpha!}J_{r_2,x_0}\right],
\]
where $f_{x_0}$ is the smooth function on $B^{T_{x_0}X}(0,a^X)$, corresponding to $f$ by the identification $B^{T_{x_0}X}(0,a^X)\cong B^X(x_0,a^X)$.  

For the first three coefficients, we get the following expressions:
\begin{align}
Q_{0,x_0}(f)=& f(x_0),\label{e:Q0f}\\
Q_{1,x_0}(f)=& f(x_0)J_{1,x_0}+\mathcal K\left[J_{0,x_0}, \sum_{j=1}^{2n} \frac{\partial f_{x_0}}{\partial Z_j}(0)Z_jJ_{0,x_0}\right], \label{e:Q1f} 
\end{align}
and 
\begin{multline}
Q_{2,x_0}(f)\\ =f(x_0)(J_{2,x_0}+\mathcal K[J_{1,x_0}, J_{1,x_0}]) +\mathcal K\left[J_{0,x_0}, \sum_{j=1}^{2n}\frac{\partial f_{x_0}}{\partial Z_j}(0)Z_j J_{1,x_0}\right]\\ +\mathcal K\left[J_{1,x_0}, \sum_{j=1}^{2n}\frac{\partial f_{x_0}}{\partial Z_j}(0)Z_j J_{0,x_0}\right]+\mathcal K\left[J_{0,x_0}, \sum_{|\alpha|=2} \frac{\partial^\alpha f_{x_0}}{\partial Z^\alpha}(0)\frac{Z^\alpha}{\alpha!}J_{0,x_0}\right].\label{e:Q2f}
\end{multline}

\begin{lem}\label{l:Tp}
If $f(x_0)=0$ and $df(x_0)=0$, then 
\begin{multline*}
p^{-n}T_{f,p,x_0}(Z,Z^\prime)\\ \cong p^{-1} \left[\mathcal P_{x_0} \left(\frac 12 {\rm Hess}\,f(x_0)Z, Z\right) \mathcal P_{x_0}\right](\sqrt{p}Z,\sqrt{p}Z^\prime)+\mathcal O(p^{-\frac{3}{2}}).
\end{multline*}
\end{lem}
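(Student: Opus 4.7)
The plan is to apply the general asymptotic expansion for the Schwartz kernel of $T_{f,p}$ recalled just before the lemma,
\[
p^{-n}T_{f,p,x_0}(Z,Z^\prime)\cong \sum_{r=0}^{k}(Q_{r,x_0}(f)\mathcal P_{x_0})(\sqrt{p}Z,\sqrt{p}Z^\prime)p^{-\frac{r}{2}}+\mathcal O(p^{-\frac{k+1}{2}}),
\]
and to evaluate the first few coefficients $Q_{r,x_0}(f)$ via the explicit formulas \eqref{e:Q0f}, \eqref{e:Q1f}, \eqref{e:Q2f}, taking advantage of the two vanishing hypotheses $f(x_0)=0$ and $df(x_0)=0$. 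Choosing $k=2$ will give a remainder $\mathcal O(p^{-3/2})$, which matches the statement.

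The coefficient $Q_{0,x_0}(f)=f(x_0)$ vanishes by hypothesis. In \eqref{e:Q1f} both terms carry either a factor of $f(x_0)$ or of $\partial f_{x_0}/\partial Z_j(0)$; since $df(x_0)=0$ forces all first partials in normal coordinates to vanish, $Q_{1,x_0}(f)=0$. Consequently the $p^{-1/2}$ term in the expansion drops out, so the first non-trivial contribution is at order $p^{-1}$. In \eqref{e:Q2f} the first three terms each contain a factor of $f(x_0)$ or of some first partial $\partial f_{x_0}/\partial Z_j(0)$ and thus vanish, leaving only
\[
Q_{2,x_0}(f)=\mathcal K\left[J_{0,x_0},\,\sum_{|\alpha|=2}\frac{\partial^\alpha f_{x_0}}{\partial Z^\alpha}(0)\frac{Z^\alpha}{\alpha!}J_{0,x_0}\right].
\]

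Next I would use $J_{0,x_0}\equiv 1$ (from the leading order of the Bergman kernel expansion) together with the fact that, in the chosen normal coordinates at $x_0$, the second order Taylor polynomial of $f$ is precisely the quadratic form $q_{x_0}(Z)=\tfrac12({\rm Hess}\,h(x_0)Z,Z)$ with $h$ replaced by $f$. By the definition of the product $\mathcal K[\cdot,\cdot]$, the operator with kernel $(Q_{2,x_0}(f)\mathcal P_{x_0})(Z,Z^\prime)$ is exactly the composition $\mathcal P_{x_0}\circ\bigl(\tfrac12({\rm Hess}\,f(x_0)Z,Z)\mathcal P_{x_0}\bigr)$. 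Substituting into the $k=2$ expansion gives the stated identity.

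No serious obstacle is anticipated; the argument is essentially bookkeeping once the general expansion and the explicit form of $Q_{0}, Q_{1}, Q_{2}$ from Section~\ref{s:char} are on hand. The only point requiring a brief check is that the vanishing $Q_{1,x_0}(f)=0$ is genuinely enough to push the error down to $\mathcal O(p^{-3/2})$, which is automatic from the general expansion at $k=2$.
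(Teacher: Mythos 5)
Your argument is correct and is essentially identical to the paper's own proof: both apply the asymptotic kernel expansion for $T_{f,p}$ with $k=2$, observe that $Q_{0,x_0}(f)$ and $Q_{1,x_0}(f)$ vanish because $f(x_0)=0$ and $df(x_0)=0$, reduce $Q_{2,x_0}(f)$ to $\mathcal K\bigl[1,\tfrac12({\rm Hess}\,f(x_0)Z,Z)\bigr]$ using $J_{0,x_0}\equiv 1$, and then invoke the definition of $\mathcal K$ to identify the resulting kernel with that of $\mathcal P_{x_0}\,\tfrac12({\rm Hess}\,f(x_0)Z,Z)\,\mathcal P_{x_0}$.
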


\begin{proof}
First of all, by \eqref{e:Q0f} and \eqref{e:Q1f}, we have
\[
Q_{0,x_0}(f)=Q_{1,x_0}(f)=0.
\]
Observe that
\[
 \sum_{|\alpha|=2} \frac{\partial^\alpha f_{x_0}}{\partial Z^\alpha}(0)\frac{Z^\alpha}{\alpha!}=\left(\frac 12 {\rm Hess}\,f(x_0)Z, Z\right). 
\]
Since $J_{0,x_0}(Z,Z^\prime)=1$, by \eqref{e:Q2f}, we get
\[
Q_{2,x_0}(f)=\mathcal K\left[1, \left(\frac 12 {\rm Hess}\,f(x_0)Z, Z\right)\right]. 
\]
By definition of $\mathcal K$, the function
\[
(Q_{2,x_0}(f) \mathcal P_{x_0})(Z,Z^\prime)=\mathcal K\left[1, \left(\frac 12 {\rm Hess}\,f(x_0)Z, Z\right)\right]\mathcal P_{x_0}(Z,Z^\prime)
\]
is the smooth kernel of the operator $\mathcal P_{x_0} \left(\frac 12 {\rm Hess}\,f(x_0)Z, Z\right) \mathcal P_{x_0}$.
\end{proof}

We have the following criterion for Toeplitz operators \cite[Theorem 6.5]{bergman} (see also \cite{ioos-lu-ma-ma}). This type of criterion was introduced in \cite[Theorem 4.9]{ma-ma08}. 

\begin{thm}\label{t:char}
A family $\{T_p: L^2(X,L^p)\to L^2(X,L^p)\}$ of bounded linear operators is a Toeplitz operator if and only if it satisfies the following three conditions:
\begin{description}
\item[(i)] For any $p\in \mathbb N$, 
\[
T_p=P_{\mathcal H_p}T_pP_{\mathcal H_p}. 
\]
\item[(ii)] For any $\varepsilon_0>0$, there exist $C>0$ and $c>0$ such that for any $p\geq 1$ and $(x,x^\prime)\in X\times X$ with $d(x,x^\prime)>\varepsilon_0$,
\[
|T_{f,p}(x,x^\prime)|\leq C e^{-c\sqrt{p} d(x, x^\prime)}. 
\]
\item[(iii)] There exists a family of polynomials $\mathcal Q_{r,x_0}\in \CC[Z,Z^\prime]$, $r\in \NN$, depending smoothly on $x_0$, of the same parity as $r$ such that, for any $k\in \mathbb N$ and $x_0\in X$,  
\[
p^{-n}T_{p,x_0}(Z,Z^\prime)\cong \sum_{r=0}^k(\mathcal Q_{r,x_0}\mathcal P_{x_0})(\sqrt{p}Z,\sqrt{p}Z^\prime)p^{-\frac{r}{2}}+\mathcal O(p^{-\frac{k+1}{2}}).
\] 
\end{description}
\end{thm}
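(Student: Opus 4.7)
My plan is to establish the two implications separately: the forward direction follows from the Bergman kernel and Toeplitz calculus already recalled in the excerpt, while the converse is the substantive content and proceeds by an inductive extraction of the smooth symbols $g_l$ from the polynomial data $\{\mathcal Q_{r,x_0}\}$ supplied by (iii). For the forward direction, assume $T_p=P_{\cH_p}\bigl(\sum_{l}p^{-l}g_l\bigr)P_{\cH_p}+\cO(p^{-\infty})$. Then (i) holds by definition. For (ii), I would represent each $T_{g_l,p}(x,x')$ as the double integral $\int_X P_p(x,y)\,g_l(y)\,P_p(y,x')\,dv_X(y)$ and combine the standard Gaussian-type off-diagonal decay $|P_p(x,y)|\le Cp^{n}e^{-c\sqrt{p}\,d(x,y)}$ of the Bergman kernel with the triangle inequality $d(x,y)+d(y,x')\ge d(x,x')$, absorbing the $p^{2n}$ pre-factor into a slightly smaller exponent $c'<c$ since $d(x,x')>\varepsilon_0$. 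For (iii), I would substitute the near-diagonal expansion of each $T_{g_l,p,x_0}$ recalled just above the theorem and regroup by half-integer powers of $p$, obtaining
\[
\mathcal Q_{k,x_0}=\sum_{r+2l=k}Q_{r,x_0}(g_l),
\]
whose parity coincides with that of $k$ since each $Q_{r,x_0}(f)$ has the parity of $r$.

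For the converse direction, suppose $T_p$ satisfies (i)--(iii). Matching the two expansions term by term dictates $g_0(x_0)=\mathcal Q_{0,x_0}$, $g_1(x_0)=\mathcal Q_{2,x_0}-Q_{2,x_0}(g_0)$, and, in general, $g_l$ is determined by $g_0,\ldots,g_{l-1}$ together with $\mathcal Q_{2l,x_0}$. The base step requires verifying that $\mathcal Q_{0,x_0}(Z,Z')$ is in fact independent of $(Z,Z')$, so that $g_0(x_0):=\mathcal Q_{0,x_0}$ defines a smooth function on $X$. This will follow from (i): at leading order it forces $\mathcal P_{x_0}\circ(\mathcal Q_{0,x_0}\mathcal P_{x_0})=\mathcal Q_{0,x_0}\mathcal P_{x_0}$, equivalently $\mathcal K[1,\mathcal Q_{0,x_0}]=\mathcal Q_{0,x_0}$, and a direct computation with the explicit kernel \eqref{e:defP} shows only constants satisfy this identity. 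Smoothness then inherits from the smooth dependence of $\mathcal Q_{r,x_0}$ on $x_0$ postulated in (iii). Setting $T_p^{(1)}:=T_p-T_{g_0,p}$, the forward direction shows that $T_p^{(1)}$ again satisfies (i)--(iii) with strictly improved expansion, and the process iterates to produce every $g_l\in C^\infty(X)$; a Borel-type summation lemma then produces a single classical symbol realizing \eqref{e:Tii} with remainder $\cO(p^{-\infty})$.

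The principal obstacle I anticipate is the structural check at each odd order of the induction: after subtracting the Toeplitz contributions of $g_0,\ldots,g_{l-1}$, the surviving odd-index coefficient $\mathcal Q_{2l+1,x_0}^{(l)}$ must automatically coincide with $\sum_{r+2j=2l+1,\,j<l}Q_{r,x_0}(g_j)$ without leftover freedom---otherwise no function $g_l$ could be chosen consistently at the next step. I expect this to reduce to an injectivity statement for the map $f\mapsto \mathcal P_{x_0}f\mathcal P_{x_0}$ on polynomials modulo a precise equivalence, together with a careful bookkeeping of the composition law $\mathcal K[\,\cdot\,,\,\cdot\,]$ in the Fock/Bargmann model on $L^2(T_{x_0}X)$. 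This algebraic step in the model space, which ultimately encodes the rigidity imposed by the projection identity (i) on the admissible polynomial expansions, is where the bulk of the technical work lies.
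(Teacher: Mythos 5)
The paper does not prove Theorem~\ref{t:char}; it explicitly cites it from \cite[Theorem 6.5]{bergman} and \cite{ioos-lu-ma-ma}, with the original argument in \cite[Theorem 4.9]{ma-ma08}. So there is no in-paper proof to match against; the benchmark is the Ma--Marinescu argument. Your forward direction is essentially right: (i) is built in, (ii) follows from the Gaussian off-diagonal decay of $P_p$ together with the triangle inequality, and (iii) comes from substituting the near-diagonal expansion of each $T_{g_l,p}$ and regrouping, with $\mathcal Q_{k,x_0}=\sum_{r+2l=k}Q_{r,x_0}(g_l)$ inheriting the correct parity.

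The converse has a genuine gap at the base step, and it is not the step you flagged as the technical crux. You assert that $\mathcal K[1,\mathcal Q_{0,x_0}]=\mathcal Q_{0,x_0}$ forces $\mathcal Q_{0,x_0}$ constant by ``a direct computation with the explicit kernel \eqref{e:defP}.'' That is false. The identity $\mathcal K[1,Q]=Q$ says exactly that the range of the operator with kernel $Q(Z,Z')\mathcal P_{x_0}(Z,Z')$ is contained in $\ker\mathcal L_{x_0}$, and this holds for every polynomial of the form $Q(Z,Z')=f(z)g(\bar z')$: indeed $f(z)\,\mathcal P_{x_0}u$ is still a holomorphic polynomial in $z$ times the Gaussian, and $\int g(\bar z')\mathcal P_{x_0}(Z,Z')u(Z')\,dZ'=\mathcal P_{x_0}(g\bar u)(Z)\in\ker\mathcal L_{x_0}$. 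Even combining with the right-hand identity $\mathcal K[\mathcal Q_{0,x_0},1]=\mathcal Q_{0,x_0}$ does not kill these examples, since $\int f(z)\mathcal P_{x_0}(Z,W)\mathcal P_{x_0}(W,Z')\,dW=f(z)\mathcal P_{x_0}(Z,Z')$ by the reproducing property. So the leading-order projection identity alone does not give constancy. What Ma--Marinescu actually prove (Proposition 4.11 and (4.30) of \cite{ma-ma08}, which the paper quotes right after the theorem) is a substantially harder statement: using condition~(i) at all orders of the expansion simultaneously, together with the off-diagonal decay~(ii) and the smooth dependence in $x_0$, they run a transport-type argument that eliminates the extra $f(z)g(\bar z')$ freedom and forces the leading nonvanishing $\mathcal Q_{k,x_0}$ to be the constant $\mathcal Q_{k,x_0}(0,0)$.

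A smaller point, but worth correcting: the odd-order issue that you expect to be ``where the bulk of the technical work lies'' is actually immediate once the constancy lemma is in hand. If $\mathcal Q_{0,x_0}=\dots=\mathcal Q_{k-1,x_0}=0$ and $\mathcal Q_{k,x_0}$ is a constant polynomial, then for $k$ odd the parity hypothesis in~(iii) forces that constant to be an odd polynomial, hence zero. So the first nonvanishing coefficient must sit at an even index $2l$, and $g_l(x_0):=\mathcal Q_{2l,x_0}(0,0)$ is well defined and smooth; the induction then proceeds as you outline. There is no separate ``injectivity of $f\mapsto\mathcal P f\mathcal P$ modulo an equivalence'' lemma to prove. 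In short: the architecture of your converse is right, and the Borel-summation finish is standard, but the claimed one-line verification of the constancy of $\mathcal Q_{0,x_0}$ is incorrect, and that constancy --- not the odd-order bookkeeping --- is where the real work is.
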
 

Moreover, it is shown in the proof of this theorem (see Proposition 4.11 and (4..30) in \cite{ma-ma08}) that, in this case, for all $x_0\in X$ and $Z,Z^\prime\in T_{x_0}X$,
\[
\mathcal Q_{0,x_0}(Z,Z^\prime)=\mathcal Q_{0,x_0}(0,0),
\]
and the principal symbol $g_0$ of $T_p$ is given by 
\begin{equation}\label{e:prinTp}
g_0(x_0)=\mathcal Q_{0,x_0}(0,0).
\end{equation}

\subsection{Construction of approximate eigensections}
We start the proofs of Theorems~\ref{t:DelB} and \ref{t:DelA} with a construction of approximate eigensections of the operator $T_p$. It follows closely the construction given in \cite[Proposition 4.2 and Proposition 5.1]{deleporte1} in the case of K\"ahler manifolds and  heavily relies on the asymptotic expansions of kernels of Toeplitz operators. So we just give a sketch of this construction and state the main results. We will use notation introduced in Section~\ref{s:asymp-intro}. 

Let $T_p$ be a self-adjoint Toeplitz operator \eqref{e:Tp}, satisfying \eqref{e:minh=0} and \eqref{e:expT-h}. Suppose that $x_0\in U_0$ is a non-degenerate minimum of $h$. The approximate eigensections, which we are going to construct, will be supported in a small neighborhood of $x_0$. So we will use the normal coordinates near $x_0$ and the trivialization of the line bundle $L$ constructed in Section \ref{s:char}. By Theorem~\ref{t:char} and Lemma \ref{l:Tp}, the smooth kernel $T_{p,x_0}(Z,Z^\prime)$ of $T_p$ in these coordinates admits the following asymptotic expansion: for any $k\geq 2$, 
\begin{equation}\label{e:T-Tk}
p^{-n}T_{p,x_0}(Z,Z^\prime)\cong \sum_{r=2}^k(\mathcal Q_{r,x_0}\mathcal P_{x_0})(\sqrt{p}Z,\sqrt{p}Z^\prime)p^{-\frac{r}{2}}+\mathcal O(p^{-\frac{k+1}{2}}),
\end{equation}
where, for any $r$, $\mathcal Q_{r,x_0}\in \CC[Z,Z^\prime]$ is a polynomial of the same parity as $r$ and 
\[
(\mathcal Q_{2,x_0}\mathcal P_{x_0})(Z,Z^\prime)=\left[\mathcal P_{x_0} (q_{x_0}+g_1(x_0)) \mathcal P_{x_0}\right](Z,Z^\prime)=\mathcal T_{x_0}(Z,Z^\prime).
\]
In particular, when $k=2$, we have
\begin{equation}\label{e:T-T2}
p^{-n}T_{p,x_0}(Z,Z^\prime)\cong \mathcal T_{x_0} (\sqrt{p}Z,\sqrt{p}Z^\prime)p^{-1}+\mathcal O(p^{-\frac{3}{2}}),
\end{equation}

First, we construct a formal eigensection of the operator $T_p$. We write formal asymptotic expansions in powers of $p^{-1/2}$:
\[
u_p(Z)=\sum_{j=0}^{+\infty} p^nu^{(j)}(\sqrt{p}Z)p^{-j/2},\quad 
\lambda_p=p^{-1}\sum_{j=0}^{+\infty} \lambda^{(j)}p^{-j/2}
\]
and express the cancellation of the coefficients of $p^{-j/2}$ in the formal expansion for $(T_p-\lambda_p)u_p$ step by step, using the asymptotic expansions \eqref{e:T-Tk} and \eqref{e:T-T2}.

At the first step, we get:
\[
\mathcal T_{x_0}u^{(0)}=\lambda^{(0)}u^{(0)}.
\]
Thus, $\lambda^{(0)}=\lambda$ is an eigenvalue of $\mathcal T_{x_0}$ and $u^{(0)}$ is an associated eigenfunction. 

At the second step, we get:
\[
\mathcal T_{x_0}u^{(1)}+\mathcal Q_{1,x_0}\mathcal P_{x_0}u^{(0)}=\lambda^{(0)}u^{(1)}+\lambda^{(1)}u^{(0)}.
\]
One can show that this equation has a solution $(u^{(1)}, \lambda^{(1)})$. 

At the third step, we get an equation, which can't be solved in general. One can solve it in the case when $\lambda$ is a simple eigenvalue. In this case, we can proceed further and obtain a solution as a formal asymptotic series in powers of $p^{-1/2}$. To obtain approximate eigensections, we multiple the formal eigenfunctions by appropriate cut-off functions.

We arrived at the following statements. 

\begin{prop}\label{p:quasi1}
Let $x_0\in U_0$ be a non-degenerate minimum of $h$ and $\lambda$ be an eigenvalue of $\cT_{x_0}$ of multiplicity $m$. There exist an orthonormal system $u^{(1)}, \ldots, u^{(m)}$ from $\mathcal S(T_{x_0}X)$ of eigenfunctions of $\cT_{x_0}$ with eigenvalue $\lambda$, functions $v^{(1)}, \ldots, v^{(m)}$ from $\mathcal S(T_{x_0}X)$ and real numbers $\mu^{(1)}, \ldots, \mu^{(m)}$ such that if for each $p$ and $j=1,\ldots,m$ we define $u^{(j)}_{p}\in L^2(X,L^p)$ and $\lambda^{(j)}_{p}\in \RR$ by 
\[
u^{(j)}_{p}(Z)=p^n \chi(Z) (u^{(j)}(\sqrt{p}Z)+p^{-1/2}v^{(j)}(\sqrt{p}Z)),\quad 
\lambda^{(j)}_{p}=p^{-1}\lambda+p^{-1/2}\mu^{(j)},
\]
$\chi$ is a cut-off function from $C^\infty_c(B^{T_{x_0}X}(0,\varepsilon))\cong C^\infty_c(B^X(x_0,\varepsilon))$ with some $\varepsilon\in (0,a_X/4)$, then
\[
\|T_pu^{(j)}_{p}-\lambda^{(j)}_{p}u^{(j)}_{p}\|_{L^2(X,L^p)}=\mathcal O(p^{-2}).
\]
\end{prop}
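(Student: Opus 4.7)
The plan is to make rigorous the two-term WKB-type construction outlined just before the statement. Working in normal coordinates around $x_0$ with the trivialization of $L$ from Section~\ref{s:char}, I substitute the ansatz
\[
u_p(Z)=p^n\bigl[u(\sqrt{p}Z)+p^{-1/2}v(\sqrt{p}Z)\bigr],\qquad \lambda_p=p^{-1}\lambda+p^{-3/2}\mu
\]
into $T_pu_p-\lambda_pu_p$ and use the kernel expansion \eqref{e:T-Tk} combined with the rescaling $W=\sqrt{p}Z$. This converts the residue into a formal series in $p^{-1/2}$ whose coefficients are operators of the form $\mathcal{Q}_{r,x_0}\mathcal{P}_{x_0}$ applied to $u$ and $v$, to be matched against the $\lambda$- and $\mu$-terms.

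Cancellation at the leading order returns $\mathcal{T}_{x_0}u=\lambda u$, forcing $u$ into the $m$-dimensional eigenspace $E_\lambda\subset\ker\mathcal{L}_{x_0}$, whose elements are Schwartz (Gaussians times polynomials). The next-order equation reads
\[
(\mathcal{T}_{x_0}-\lambda)v=\mu\, u-\mathcal{R}u,
\]
where $\mathcal{R}$ is the subleading coefficient operator in \eqref{e:T-Tk} (the analogue of $\mathcal{Q}_{3,x_0}\mathcal{P}_{x_0}$). Fredholm compatibility requires $u$ to be an eigenvector of $P_\lambda\mathcal{R}P_\lambda$ with eigenvalue $\mu$, where $P_\lambda$ is the orthogonal projection onto $E_\lambda$. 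Self-adjointness of $T_p$ forces this finite-dimensional operator to be Hermitian, and the spectral theorem then produces the orthonormal basis $u^{(1)},\ldots,u^{(m)}$ of $E_\lambda$ together with the real numbers $\mu^{(1)},\ldots,\mu^{(m)}$. For each $j$, the corrector $v^{(j)}\in E_\lambda^{\perp}\cap\ker\mathcal{L}_{x_0}\subset\mathcal{S}(T_{x_0}X)$ is then uniquely determined by the Fredholm inverse of $\mathcal{T}_{x_0}-\lambda$ on $E_\lambda^{\perp}$.

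To bound $\|T_pu_p^{(j)}-\lambda_p^{(j)}u_p^{(j)}\|$ I split the residue into three contributions: (i) the first uncancelled formal term (the next order in the rescaled expansion), whose coefficient is a Schwartz function of $W$ and which after undoing the rescaling gives exactly the claimed size; (ii) the remainder in \eqref{e:T-Tk} at a sufficiently large truncation order $k$, which is $\mathcal{O}(p^{-\infty})$ once tested against the rapidly decaying profiles $u^{(j)}(\sqrt{p}\cdot)$ and $v^{(j)}(\sqrt{p}\cdot)$; and (iii) commutator terms with $\chi$, where derivatives of $\chi$ live on $|Z|\sim\varepsilon$ while the rescaled profiles concentrate on $|Z|\lesssim p^{-1/2}$, so the off-diagonal exponential decay of $T_{p,x_0}(Z,Z')$ furnished by Theorem~\ref{t:char} again yields $\mathcal{O}(p^{-\infty})$.

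The main obstacle is the subleading step when $m>1$: one must choose the basis of $E_\lambda$ to diagonalize the correct Hermitian perturbation matrix $P_\lambda\mathcal{R}P_\lambda$ before the construction can proceed, and Hermiticity has to be verified from self-adjointness of $T_p$ together with the symmetry properties of the kernel polynomials $\mathcal{Q}_{r,x_0}$. The remaining verifications (Schwartz class preservation for the correctors, decay estimates for the cut-off error) are routine once the polynomial-times-Gaussian structure of $\ker\mathcal{L}_{x_0}$ and the characterization in Theorem~\ref{t:char} are in hand.
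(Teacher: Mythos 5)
Your proposal reproduces, with added detail, exactly the WKB-type hierarchy the paper sketches (which in turn follows Deleporte): leading order gives $\mathcal{T}_{x_0}u=\lambda u$, the next order is the Fredholm equation $(\mathcal{T}_{x_0}-\lambda)v=\mu u-\mathcal{R}u$ with $\mathcal{R}=\mathcal{Q}_{3,x_0}\mathcal{P}_{x_0}$ solved by diagonalizing the Hermitian matrix $P_\lambda\mathcal{R}P_\lambda$, and the cut-off and truncation errors are controlled by the off-diagonal decay in Theorem~\ref{t:char}. This matches the paper's argument (note the paper's $p^{-1/2}\mu^{(j)}$ in the statement and $\mathcal{Q}_{1,x_0}$ in the sketch are typos for $p^{-3/2}\mu^{(j)}$ and $\mathcal{Q}_{3,x_0}$; you have the correct orders), so no further comparison is needed.
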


\begin{prop}\label{p:quasi2}
Let $x_0\in U_0$ be a non-degenerate minimum of $h$, $\lambda$ be a simple eigenvalue of $\cT_{x_0}$ and $u_0$ be a normalized eigenfunction of $\cT_{x_0}$. There exist a sequence $(u_k)_{k\geq 1}$ of functions from $\mathcal S(T_{x_0}X)$ with $(u_0,u_k)=\delta_{0k}$ and a sequence $(\lambda_k)_{k\geq 0}$ of real numbers, with $\lambda_0=\lambda$ and $\lambda_k=0$ for $k$ odd, such that if for each $N$ and $p$ we define $u_{N,p}\in L^2(X,L^p)$ and $\lambda_{N,p}\in \RR$ by 
\[
u_{N,p}(Z)=\chi(Z)  p^n\sum_{k=0}^N p^{-k/2}u_k(\sqrt{p}Z),\quad 
\lambda_{N,p}=p^{-1}\sum_{k=0}^N p^{-k/2}\lambda_k,
\]
$\chi$ is a cut-off function from $C^\infty_c(B^{T_{x_0}X}(0,\varepsilon))\cong C^\infty_c(B^X(x_0,\varepsilon))$ with some $\varepsilon\in (0,a_X/4)$,  then
\[
\|T_pu_{N,p}-\lambda_{N,p}u_{N,p}\|_{L^2(X,L^p)}=\mathcal O(p^{-(N+3)/2})
\]
\end{prop}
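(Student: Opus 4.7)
The plan is a standard WKB-type construction adapted to the $\sqrt{p}$ semiclassical scale: I would build $(u_k,\lambda_k)$ inductively as a formal asymptotic solution of $T_p u_p = \lambda_p u_p$, then multiply by the cutoff $\chi$ and estimate the residual, in the spirit of Proposition~\ref{p:quasi1} and of the construction in \cite{deleporte1}. Substituting the formal series
\[
u_p(Z) = p^n \sum_{k \geq 0} p^{-k/2} u_k(\sqrt{p}\,Z), \qquad \lambda_p = p^{-1}\sum_{k \geq 0} p^{-k/2} \lambda_k
\]
into the equation $T_p u_p = \lambda_p u_p$, using the kernel expansion \eqref{e:T-Tk} and the change of variable $W = \sqrt{p}\,Z$ to move to the model tangent space, and then matching coefficients of $p^{n-1-k/2}$, one obtains at order $k=0$ exactly $\mathcal T_{x_0} u_0 = \lambda u_0$ (satisfied by hypothesis with $\lambda_0 = \lambda$), and for each $k\geq 1$ a triangular equation
\[
(\mathcal T_{x_0} - \lambda) u_k = \lambda_k u_0 + F_k,
\]
in which $F_k$ depends only on the previously constructed $u_0,\ldots,u_{k-1}$ and $\lambda_1,\ldots,\lambda_{k-1}$ together with the polynomials $\mathcal Q_{r,x_0}$ from \eqref{e:T-Tk} for $3 \leq r \leq k+2$.

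Since $\lambda$ is a simple eigenvalue of the self-adjoint operator $\mathcal T_{x_0}$, the Fredholm alternative fixes $\lambda_k$ uniquely by the solvability condition $\lambda_k = -\langle F_k, u_0\rangle$, after which $u_k$ is pinned down by imposing $\langle u_k, u_0 \rangle = 0$. The inductive claim that each $u_k$ lies in $\mathcal S(T_{x_0}X)$ follows because the range of $\mathcal P_{x_0}$ is a Fock-type subspace of $\mathcal S(T_{x_0}X)$, multiplication by polynomials and convolution with $\mathcal P_{x_0}$ preserve Schwartz decay, and the reduced resolvent $(\mathcal T_{x_0} - \lambda)^{-1}$ preserves $\mathcal S(T_{x_0}X) \cap u_0^\perp$. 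The key assertion that $\lambda_k = 0$ for odd $k$ follows from a parity argument: $\mathcal P_{x_0}(Z,Z')$ is even in $(Z,Z')$, each $\mathcal Q_{r,x_0}$ has parity $r$, and $\mathcal T_{x_0}$ preserves parity, so the simple eigenfunction $u_0$ has a definite parity $\varepsilon_0$ and, by induction, $u_k$ has parity $(-1)^k\varepsilon_0$; every nonzero term contributing to $\langle F_k, u_0\rangle$ is of the form $\langle (\mathcal Q_{r,x_0}\mathcal P_{x_0})u_l, u_0\rangle$ with $r+l = k+2$, and its integrand has total parity $(-1)^k$, hence integrates to zero whenever $k$ is odd.

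It remains to estimate $\|T_p u_{N,p} - \lambda_{N,p} u_{N,p}\|_{L^2(X,L^p)}$. The residual splits into three pieces: (i) the intrinsic remainder of \eqref{e:T-Tk} truncated at a sufficiently large order $k_0 \gg N$, which is $\mathcal O(p^{-\infty})$ once integrated against the compactly supported, Schwartz-scaled ansatz; (ii) the formal terms of order higher than $N$ arising from truncating $\sum_{k\leq N}$, whose $L^2(X,L^p)$ norm is of the required size $\mathcal O(p^{-(N+3)/2})$ once one accounts for the rescaling Jacobian $p^{-n}\,dW = dZ$ and the uniform Schwartz bounds on the finitely many $u_k$; and (iii) the commutator $[T_p,\chi]$, which is supported where $|Z|\geq \varepsilon$ and is $\mathcal O(p^{-\infty})$ by the rapid decay of $u_k(\sqrt{p}\,Z)$ at scale $\sqrt{p}\,\varepsilon$ together with the off-diagonal exponential bound of Theorem~\ref{t:char}(ii). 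The most delicate step I expect is the bookkeeping of the $\sqrt{p}$-rescaling and of the cutoff and off-diagonal error terms, but no new ideas beyond those already employed in Proposition~\ref{p:quasi1} should be required.
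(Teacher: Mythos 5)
Your proposal follows the same inductive WKB-type construction as the paper (which itself adapts Deleporte's argument from the K\"ahler case): substitute the $p^{-1/2}$-scaled formal series, use the kernel expansion \eqref{e:T-Tk}, match coefficients to get a triangular hierarchy solved via the Fredholm alternative on the simple eigenvalue, and then cut off and estimate the residual. Your parity argument for $\lambda_k=0$ at odd $k$ is correct and in fact more explicit than the paper's sketch. One small slip worth flagging: the range of $\mathcal P_{x_0}$ is \emph{not} a subspace of $\mathcal S(T_{x_0}X)$ (it is the kernel of $\mathcal L_{x_0}$, which contains non-Schwartz elements of the form $\phi(z)e^{-\frac14\sum a_j|z_j|^2}$ with $\phi$ a general Fock-space function); what the argument actually needs, and what holds, is that $u_0$ is Schwartz because it is a Hermite-type eigenfunction of the model operator, and that polynomial multiplication, projection by $\mathcal P_{x_0}$, and the reduced resolvent $(\mathcal T_{x_0}-\lambda)^{-1}|_{u_0^{\perp}\cap\operatorname{Ran}\mathcal P_{x_0}}$ all map Schwartz functions in $\operatorname{Ran}\mathcal P_{x_0}$ to Schwartz functions, so the recursion stays in $\mathcal S(T_{x_0}X)$.
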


\subsection{Proofs of Theorems~\ref{t:DelB} and \ref{t:DelA}} 
Suppose that $T_p$ is a self-adjoint Toeplitz operator with the principal symbol $h$, satisfying \eqref{e:minh=0}, such that each minimum is non-degenerate. In this case, $U_0=\{x_1,\ldots, x_N\}$ and the model operator $\mathcal T$ associated with $T_p$ is a self-adjoint operator on $L^2(T_{x_1}X)\oplus \ldots \oplus  L^2(T_{x_N}X)$ defined by \eqref{e:def-Tau}.

\begin{lem}\label{l:sp}
For any $c>0$ and $\gamma\in (0,1/2)$, there exists $C>0$ such that for any sequence $u_p\in \cH_p$ of eigensections of $T_p$:
\[
T_pu_p=\lambda_pu_p, \quad \|u_p\|=1,
\] 
with $\lambda_p\in (0, cp^{-1})$, we have for $p\in \NN$,  
\[
(p\lambda_p-Cp^{-1/2+\gamma}, p\lambda_p+Cp^{-1/2+\gamma})\cap \sigma(\cT)\neq \emptyset. 
\]
\end{lem}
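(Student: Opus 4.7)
The plan is to apply the spectral theorem for the self-adjoint operator $\cT$: the conclusion is equivalent to producing, for each large $p$, a nonzero $\psi_p$ in the domain of $\cT$ with $\|(\cT - p\lambda_p)\psi_p\| \leq Cp^{-1/2+\gamma}\|\psi_p\|$. I will construct such a $\psi_p$ by localizing $u_p$ near one of the wells $x_j\in U_0$ and rescaling to the tangent space. The first ingredient is a concentration estimate. The proof of Theorem~\ref{low-l0} establishes \eqref{e:hk}, namely $(u_p, h^k u_p) \leq C_k p^{-k}$ for every $k\in\NN$, which combined with the non-degeneracy $h(x) \geq \alpha d(x,U_0)^2$ from \eqref{e:nondeg0} yields
\[
\int_X d(x,U_0)^{2k} |u_p(x)|^2\,dv_X(x) \leq C_k p^{-k}, \quad k\in\NN.
\]
Choose smooth cut-offs $\chi_j \in C^\infty_c(X)$ equal to $1$ on a small neighborhood of $x_j$, with pairwise disjoint supports contained in normal-coordinate balls. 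Then $\|u_p - \sum_{j=1}^N \chi_j u_p\| = \cO(p^{-\infty})$, and by pigeonhole there is an index $j = j(p)$ with $\|\chi_j u_p\| \geq (2N)^{-1}$ for all $p$.

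Next, I transfer $\chi_j u_p$ to the model space $L^2(T_{x_j}X)$. In the normal coordinates and parallel-transport trivialization near $x_j$ described in Section~\ref{s:char}, set
\[
\psi_p(Z) := p^{-n/2}\kappa_{x_j}^{1/2}(Z/\sqrt{p}) (\chi_j u_p)(Z/\sqrt{p}), \quad Z \in T_{x_j}X.
\]
A direct change of variables gives $\|\psi_p\|_{L^2(T_{x_j}X)} = \|\chi_j u_p\|_{L^2(X,L^p)} \geq (2N)^{-1}$, and the weighted bound above rescales to $\int_{T_{x_j}X} |Z|^{2k} |\psi_p(Z)|^2 \, dv_{T_{x_j}X} \leq C_k$ for every $k\in\NN$. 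Thus $\psi_p$ lies in the domain of $\cT_{x_j}$ and is uniformly bounded in every polynomially weighted $L^2$-space.

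The core of the argument is to compute $(\cT_{x_j} - p\lambda_p)\psi_p$. From $T_p u_p = \lambda_p u_p$ one has $T_p(\chi_j u_p) = \lambda_p \chi_j u_p + [T_p, \chi_j] u_p$; the commutator contribution is $\cO(p^{-\infty})$ in $L^2(X,L^p)$ because the Toeplitz kernel decays exponentially off the diagonal (Theorem~\ref{t:char}(ii)) while the factor $\chi_j(y) - \chi_j(x)$ forces a positive separation $d(x,y) \geq c > 0$ on its support. Using the Schwartz kernel expansion from Theorem~\ref{t:char}(iii), in the form \eqref{e:T-T2}, the rescaled kernel $p \cdot p^{-n} T_{p,x_j}(Z/\sqrt{p}, Z'/\sqrt{p})$ differs from $\cT_{x_j}(Z,Z')$ by a remainder bounded by $Cp^{-1/2}(1+|Z|+|Z'|)^M e^{-c|Z-Z'|}$. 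A weighted Schur-type estimate, using the moment bounds above to absorb the polynomial prefactors, yields
\[
\|(\cT_{x_j} - p\lambda_p)\psi_p\|_{L^2(T_{x_j}X)} \leq Cp^{-1/2+\gamma},
\]
the contribution from expanding $\kappa_{x_j}(Z/\sqrt{p}) = 1 + \cO(|Z|^2/p)$ being of the same type. Embedding $\psi_p$ as an element of $L^2(T_{x_1}X)\oplus\ldots\oplus L^2(T_{x_N}X)$ via the $j$-th summand and applying the spectral theorem for $\cT$ closes the proof.

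The principal obstacle is the uniform control of these remainders in the third step, particularly absorbing the polynomial prefactors $(1+|Z|)^M$ in the kernel expansion; this is precisely why the arbitrarily small loss $\gamma>0$ is permitted and why the all-order moment bounds of the first step have to be established before the rescaling argument is attempted.
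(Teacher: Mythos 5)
Your overall strategy is the same as the paper's: localize the eigensection near the wells using Theorem~\ref{low-l0}, transfer and rescale to the model space $L^2(T_{x_j}X)$, compare the rescaled kernel of $T_p$ with that of $\cT_{x_j}$ via the expansion \eqref{e:T-T2}, and apply the spectral theorem. The differences are cosmetic: you pick one well $j$ by pigeonhole where the paper uses the direct sum over all wells; you derive uniform moment bounds on the rescaled section directly from \eqref{e:hk} combined with \eqref{e:nondeg0} (which is a clean observation, arguably sharper than the paper's use of \eqref{e:loc-xj}); and you estimate the cut-off error via the commutator $[T_p,\chi_j]u_p$, whereas the paper gets the same conclusion by an almost-orthogonality argument (equation \eqref{e:Tp2}).

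There is, however, a genuine gap in your treatment of the commutator. You assert that $[T_p,\chi_j]u_p=\cO(p^{-\infty})$ because ``the factor $\chi_j(y)-\chi_j(x)$ forces a positive separation $d(x,y)\geq c>0$ on its support.'' This is false: taking $x$ and $y$ both in the transition annulus of $\chi_j$, with $\chi_j(x)\neq\chi_j(y)$, allows $d(x,y)$ arbitrarily small. In fact $[T_p,\chi_j]$ (and $[P_{\cH_p},\chi_j]$) has operator norm of order $p^{-1/2}$, not $\cO(p^{-\infty})$: near the diagonal the kernel behaves like $p^{n}\,|x-y|\,\phi(\sqrt{p}\,|x-y|)$, whose Schur norm is $\sim p^{-1/2}$. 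This matters because, in your rescaled picture, you need $\|T_p(\chi_j u_p)-\lambda_p\chi_j u_p\|\leq Cp^{-3/2+\gamma}$ (the factor $p$ reappears when you pass from $T_p$ to $\cT_{x_j}$), and an $\cO(p^{-1/2})$ bound on the commutator would completely destroy the estimate. What actually saves the argument is that $u_p$ concentrates near $U_0$: split the kernel of $[T_p,\chi_j]$ into the far region $d(x,y)>\varepsilon_0$, where \eqref{e:expT-h} and Theorem~\ref{t:char}(ii) give $\cO(p^{-\infty})$, and the near region $d(x,y)\leq\varepsilon_0$, where $\chi_j(y)\neq\chi_j(x)$ forces \emph{both} $x$ and $y$ into a fixed neighborhood of the transition annulus, which is separated from $U_0$; there Theorem~\ref{low-l0} makes the relevant piece of $u_p$ of size $\cO(p^{-\infty})$ in $L^2$, and the near-diagonal block of $[T_p,\chi_j]$, while not small, is uniformly bounded as an operator. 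With this correction your argument goes through; the paper's almost-orthogonality computation \eqref{e:Tp2} reaches the same conclusion while avoiding the commutator estimate entirely.
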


\begin{proof}
Fix $c>0$. By Theorem~\ref{low-l0},  for any $\delta>0$ and $k\in \NN$, there exists $C_1>0$ such that, for any sequence $u_p\in \cH_p$ of normalized eigensections of $T_p$ with $\lambda_p\in (0, cp^{-1})$ and for any $p\in \NN$,  
\begin{equation}\label{e:loc-xj}
\int_{X\setminus V_\delta} |u_p(x)|^2 dv_X(x)\leq C_1 p^{-k}.
\end{equation}
Using a partition of unity subordinate to the open covering 
\[
X=\bigcup_{j=1}^NB^X(x_j, \varepsilon_0) \cup (X\setminus U_0)
\] 
with some $\varepsilon_0 \in (0,a_X)$ small enough, we can write 
\[
u_p=\sum_{j=1}^N u_p^{(j)}+u_p^{(0)},
\]
where each $u_p^{(j)}$ is supported in $B^X(x_j, \varepsilon_0)\cong B^{T_{x_j}X}(0,\varepsilon_0)$ and satisfies \eqref{e:loc-xj}, $u_p^{(0)}$ is supported in $X\setminus U_0$ and, by \eqref{e:loc-xj}, $\|u_p^{(0)}\|=\mathcal O(p^{-\infty})$. We will assume that the balls $B^X(x_j, \varepsilon_0)$ are pairwise disjoint.  

We will use the same notation $u_p^{(j)}$ for the corresponding function on $B^{T_{x_j}X}(0,\varepsilon_0)$ and define a function $\tilde u_p^{(j)}\in C^\infty_c(B^{T_{x_j}X}(0,\varepsilon_0))$ by 
\[
\tilde u_p^{(j)}(Z)=\kappa_{x_j}^{1/2}(Z) u_p^{(j)}(Z), \quad Z\in B^{T_{x_j}X}(0,\varepsilon_0). 
\]
Put
\begin{equation}\label{e:tildeu_p}
\tilde u_p=\bigoplus_{j=1}^N\tilde u_p^{(j)}\in \bigoplus_{j=1}^N L^2(T_{x_j}X).
\end{equation}
Observe that 
\begin{align*}
\|\tilde u_p^{(j)}\|_{L^2(T_{x_j}X)}& =\|\tilde u_p^{(j)}\|_{L^2(B^{T_{x_j}X}(0,\varepsilon_0))}=\|u_p^{(j)}\|_{L^2(B^{X}(x_j,\varepsilon_0))}=\|u_p^{(j)}\|, \\
\|\tilde u_p\|& =1+\mathcal O(p^{-\infty}).
\end{align*}

For any $j=1,\ldots,N$ and $p\in \NN$, let $\mathcal T_{x_j,p}$ be the operator in $L^2(T_{x_j}X)$ with the Schwartz kernel 
\[
\mathcal T_{x_j,p}(Z,Z^\prime)=p^n\mathcal T_{x_j}(\sqrt{p}Z,\sqrt{p}Z^\prime), \quad Z,Z^\prime\in T_{x_j}X,
\]
where $\mathcal T_{x_j}(Z,Z^\prime)$ is the Schwartz kernel of the operator $\mathcal T_{x_j}$. It is clear that the operator $\mathcal T_{x_j,p}$ is obtained from the operator $\mathcal T_{x_j}$ by an obvious scaling. So these operators are unitarily equivalent and, in particular, have the same spectrum. Let $\mathcal T_p$ be the self-adjoint operator on $L^2(T_{x_1}X)\oplus \ldots \oplus  L^2(T_{x_N}X)$ defined by 
\[
\mathcal T_p=\mathcal T_{x_1,p}\oplus \ldots \oplus \mathcal T_{x_N,p}.
\] 

Fix some $\gamma \in (0,1/2)$. Using \eqref{e:T-T2} and \eqref{e:loc-xj}, one can show (cf. \cite[Proposition 2.7]{deleporte1}) that there exists $C_2>0$ such that for any $p\in \NN$ and $j=1, \ldots,N$, 
\begin{multline}\label{e:T-cT}
\|\kappa_{x_j}^{1/2}T_p u_p^{(j)}-p^{-1}\cT_{x_j,p}\tilde u_p^{(j)}\|_{L^2(B^{T_{x_j}X}(0,\varepsilon_0+\varepsilon))}\\ \leq C_2p^{-3/2+\gamma}\|\tilde u_p^{(j)}\|_{L^2(T_{x_j}X)}. 
\end{multline}
Here $\varepsilon>0$ is given by the asymptotic expansion \eqref{e:T-T2}. We choose $\varepsilon_0>0$ in such a way that $\varepsilon_0+\varepsilon\in (0,a_X)$. The constant $C_2>0$ depends only on the constant $C_1>0$ in \eqref{e:loc-xj} and, therefore, on $c>0$, but not on the particular sequence $u_p$. 

Since each $\tilde u_p^{(j)}$ is supported in $B^{T_{x_j}X}(0,\varepsilon_0)$ and the kernel of $\cT_{x_j,p}$ is rapidly decaying outside the diagonal as $p\to \infty$, we have
\[
\|\cT_{x_j,p} \tilde u_p^{(j)}\|_{L^2(T_{x_j}X\setminus B^{T_{x_j}X}(0,\varepsilon_0+\varepsilon))}=\mathcal O(p^{-\infty})\|\tilde u_p^{(j)}\|,
\]
and, therefore, 
\begin{multline*}
\|p^{-1}\cT_p\tilde u_p-\lambda_p\tilde u_p\|^2=\sum_{j=1}^N \|p^{-1}\cT_{x_j,p} \tilde u_p^{(j)}-\lambda_p \tilde u_p^{(j)}\|^2_{L^2(T_{x_j}X)}\\ =\sum_{j=1}^N \|p^{-1}\cT_{x_j,p}\tilde u_p^{(j)}-\lambda_p\tilde u_p^{(j)}\|^2_{L^2(B^{T_{x_j}X}(0,\varepsilon_0+\varepsilon))}+\mathcal O(p^{-\infty})\|\tilde u_p^{(j)}\|^2.
\end{multline*}
By \eqref{e:T-cT}, it follows that
\begin{multline}\label{e:Tp0}
\|p^{-1}\cT_p\tilde u_p-\lambda_p\tilde u_p\|^2\\ 
\begin{aligned}
& =\sum_{j=1}^N \|\kappa_{x_j}^{1/2} T_{p} u_p^{(j)}-\lambda_p\tilde u_p^{(j)}\|^2_{L^2(B^{T_{x_j}X}(0,\varepsilon_0+\varepsilon))} +C^2_2p^{-3+2\gamma}\|\tilde u_p\|^2 \\ &=\sum_{j=1}^N \|T_{p} u_p^{(j)}-\lambda_pu_p^{(j)}\|^2_{L^2(B^{X}(x_j,\varepsilon_0+\varepsilon))} +C_2^2p^{-3+2\gamma}.
\end{aligned}
\end{multline}

Since each $u_p^{(j)}$ is supported in $B^{X}(x_j,\varepsilon_0)$, we get
\begin{equation}\label{e:Tp1a}
\|T_{p} u_p^{(j)}-\lambda_pu_p^{(j)}\|^2=\|T_{p} u_p^{(j)}-\lambda_pu_p^{(j)}\|^2_{L^2(B^{X}(x_j,\varepsilon_0+\varepsilon))}+\mathcal O(p^{-\infty}).
\end{equation}
Moreover, since $B^X(x_j, \varepsilon_0)$ are pairwise disjoint, $(u_p^{(j)}, u_p^{(k)})_{L^2(X)}=0$ for $j\neq k$. By Theorem \ref{t:char}, this implies that, for $j\neq k$,
\[
(T_{p} u_p^{(j)}-\lambda_pu_p^{(j)}, T_{p} u_p^{(k)}-\lambda_pu_p^{(k)})=\mathcal O(p^{-\infty}).
\]
Using this almost orthogonality property and the fact that $u_p$ is an eigensection of $T_p$ with the eigenvalue $\lambda_p$, we get
\begin{equation}\label{e:Tp2}
\sum_{j=1}^N \|T_{p} u_p^{(j)}-\lambda_pu_p^{(j)}\|^2 =\|T_{p} u_p-\lambda_pu_p\|^2+\mathcal O(p^{-\infty})=\mathcal O(p^{-\infty}).
\end{equation}
Combining \eqref{e:Tp0}, \eqref{e:Tp1a} and \eqref{e:Tp2}, we conclude that 
\begin{equation}\label{e:quasiCT}
\|p^{-1}\cT_p\tilde u_p-\lambda_p\tilde u_p\|=C_3p^{-3/2+\gamma}\|\tilde u_p\|,
\end{equation}
that completes the proof of the lemma.
\end{proof}

Fix $\gamma\in (0,1/2)$ and $c>0$. By Lemma~\ref{l:sp}, there exists $C>0$ such that, for all $p\in \NN$, 
\[
\sigma(T_p)\cap (0, cp^{-1})\subset \bigcup_{\mu\in \sigma(\cT)\cap (0, c)} (\mu p^{-1}-C p^{-3/2+\gamma}, \mu p^{-1}+C p^{-3/2+\gamma}).
\]

Let us show that, for any $\mu\in \sigma(\cT)$, the number $m_p$ of eigenvalues of $T_p$ in the interval $(\mu p^{-1}-C p^{-3/2+\gamma}, \mu p^{-1}+C p^{-3/2+\gamma})$ is independent of $p$ for large $p$ and equals the multiplicity $m$ of $\mu$:
\begin{equation}\label{e:mpm}
m_p=m, \quad p\gg 1. 
\end{equation}

Using Proposition~\ref{p:quasi1}, one can easily show that, if $\mu$ is an eigenvalue of the operator $\cT$ of multiplicity $m$, then there exists at least $m$ eigenvalues of the operator $T_p$ in the interval $(\mu p^{-1}- Cp^{-3/2}, \mu p^{-1} + Cp^{-3/2})$  for some $C>0$. Therefore, $m_p\geq m$ for any $p$ large enough. 

On the other hand, let $u_{1,p}, \ldots, u_{m_p,p}$ be an orthonormal basis of eigensections of $T_p$ with the corresponding eigenvalues $\lambda_{1,p}, \ldots, \lambda_{m_p,p}$ in the interval $(\lambda p^{-1}-C p^{-3/2+\gamma}, \lambda p^{-1}+C p^{-3/2+\gamma})$. For the functions $\tilde u_{1,p}, \ldots, \tilde u_{m_p,p}$ defined by \eqref{e:tildeu_p}, we get
\[
(\tilde u_{j,p}, \tilde u_{k,p})=\delta_{jk}+\mathcal O(p^{-\infty})
\]
and, by \eqref{e:quasiCT},
\[
\|p^{-1}\cT_p\tilde u_{j,p}-\lambda_{j,p}\tilde u_{j,p}\|=Cp^{-3/2+\gamma}\|\tilde u_p\|.
\]
It follows that there are at least $m_p$ eigenvalues of $\cT_p$ in the interval $(\mu-C p^{-1/2+\gamma}, \mu+C p^{-1/2+\gamma})$. But, for large $p$, the only eigenvalue $\cT_p$ in this interval is $\mu$. Therefore, $m_p\leq m$, that completes the proof of \eqref{e:mpm}. 

This shows that each eigenvalue $\lambda^m_p$ has an asymptotic expansion, as $p\to \infty$, of the form
\[
\lambda^m_p=p^{-1}\mu_m+\mathcal O(p^{-3/2}).
\]
Now the asymptotic expansions \eqref{t:DelB} in Theorem~\ref{t:DelB} and the complete asymptotic expansions in Theorem~\ref{t:DelA} can be proved in a standard way by means of Propositions~\ref{p:quasi1} and~\ref{p:quasi2}.

\section{Applications to the Bochner Laplacian}\label{s:4} 

\subsection{Upper bounds for eigenvalues of the Bochner Laplacian}\label{s:41} 

\begin{proof}[Proof of Theorem~\ref{t:upper-estimates}]
The Bochner Laplacian $\Delta^{L^p}$ can be written in the following Schr\"odinger operator type form
 \[
\Delta^{L^p}=\Delta_p+p\tau. 
 \]
So we have
\[
p^{-1}P_{\mathcal H_p}\Delta^{L^p}P_{\mathcal H_p}=p^{-1} P_{\mathcal H_p}\Delta_pP_{\mathcal H_p}+P_{\mathcal H_p}\tau P_{\mathcal H_p}.
\]
It is well-known (see, for instance, \cite[Theorem XIII.3]{RSIV}), the Rayleigh-Ritz technique) that 
\[
\lambda_j(\Delta^{L^p})\leq \lambda_j(P_{\mathcal H_p}\Delta^{L^p}P_{\mathcal H_p}), \quad j\in \NN,
\]
It remains to get an upper bound for $\lambda_j(P_{\mathcal H_p}\Delta^{L^p}P_{\mathcal H_p})$. 

Here the crucial fact is that the operator $P_{\mathcal H_p}\Delta_pP_{\mathcal H_p}=\Delta_pP_{\mathcal H_p}$ is a Toeplitz operator. This was proved in \cite{bergman}, extending previous results of \cite{ma-ma08a}.  Denote by $P_{1,p}(x,x^\prime)$, $x,x^\prime\in X$ the smooth kernel of the operator $\Delta_pP_{\mathcal H_p}$ with respect to the Riemannian volume form $dv_X$. It is called a generalized Bergman kernel of $\Delta_p$.  For any $k\in \mathbb N$ and $x_0\in X$,   we have  
\begin{multline}\label{e:Pqr}
p^{-n}P_{1,p,x_0}(Z,Z^\prime)\cong 
\sum_{r=2}^jF_{1,r,x_0}(\sqrt{p} Z, \sqrt{p}Z^\prime)\kappa^{-\frac 12}(Z)\kappa^{-\frac 12}(Z^\prime)p^{-\frac{r}{2}+1},
\end{multline}
where
\begin{equation}\label{e:Fqr}
F_{1,r,x_0}(Z,Z^\prime)=J_{1,r,x_0}(Z,Z^\prime)\mathcal P_{x_0}(Z,Z^\prime),
\end{equation}
$J_{1,r,x_0}(Z,Z^\prime)$ are polynomials in $Z, Z^\prime$, depending smoothly on $x_0$, with the same parity as $r$ and $\operatorname{deg} J_{1,r,x_0}\leq 3r$.

So the operator $p^{-1}P_{\mathcal H_p}\Delta^{L^p}P_{\mathcal H_p}$ is a Toeplitz operator:
\[
p^{-1}P_{\mathcal H_p}\Delta^{L^p}P_{\mathcal H_p}=P_{\mathcal H_p}\left(\sum_{l=0}^\infty p^{-l}g_l\right)P_{\mathcal H_p}+\mathcal O(p^{-\infty}),
\]
where $g_0=\tau$ and $g_1$ is the principal symbol of $P_{\mathcal H_p}\Delta_pP_{\mathcal H_p}$. By \eqref{e:prinTp} and \eqref{e:Fqr}, it is  given by 
\begin{equation}\label{e:defJ12}
g_1(x_0)=J_{1,2}(x_0):=J_{1,2,x_0}(0,0)=\frac{F_{1,2,x_0}(0,0)}{\mathcal P_{x_0}(0,0)}.
\end{equation}
By Theorem~\ref{t:DelB}, we have 
\[
p^{-1}\lambda_j(P_{\mathcal H_p}\Delta^{L^p}P_{\mathcal H_p})=\tau_0+p^{-1}\mu_j+p^{-3/2}\phi_j+\mathcal O(p^{-2}), 
\]
with some $\phi_j\in\RR$, that immediately completes the proof of Theorem~\ref{t:upper-estimates}.
\end{proof}

The coefficient $F_{1,2,x_0}$ (and, therefore, the function $J_{1,2}$) can be computed explicitly (see Subsection 2.1, in particular, the formula (2.12) in \cite{ma-ma08a}). Let us recall this formula. We will use notation of Section~\ref{s:char}. Put
\[
\frac{\partial}{\partial z_j}=\frac{1}{2}\left(\frac{\partial}{\partial Z_{2j-1}}-i\frac{\partial}{\partial Z_{2j}}\right), \quad \frac{\partial}{\partial \overline{z}_j}=\frac{1}{2}\left(\frac{\partial}{\partial Z_{2j-1}}+i\frac{\partial}{\partial Z_{2j}}\right).
\]
Let $\mathcal R(Z) = \sum_{j=1}^{2n} Z_j e_j=Z$ denote the radial vector field on $T_{x_0}X$. Define first order differential operators $b_j,b^{+}_j, j=1,\ldots,n,$ on $T_{x_0}X$ by
\[
b_j= -2\nabla_{\tfrac{\partial}{\partial z_j}}-R^L_{x_0}(\mathcal R, \tfrac{\partial}{\partial z_j})=-2{\tfrac{\partial}{\partial z
_j}}+\frac{1}{2}a_j\overline{z}_j,
\]
\[
b^{+}_j= 2\nabla_{\tfrac{\partial}{\partial \overline{z}_j}} + R^L_{x_0}(\mathcal R, \tfrac{\partial}{\partial \overline{z}_j})=2{\tfrac{\partial}{\partial\overline{z}_j}}+\frac{1}{2}a_j z_j.
\]
So we can write
\[
\cL_{x_0}=\sum_{j=1}^n b_j b^{+}_j,\quad \tau(x_0)= \sum_{j=1}^n a_j. 
\]
Then 
\begin{equation}\label{e:F120}
F_{1,2,x_0}(Z,Z^\prime) = [\cP_{x_0}\mathcal F_{1,2,x_0}\cP_{x_0}] (Z,Z^\prime),
\end{equation}
where $\mathcal F_{1,2,x_0}$ is an unbounded linear operator in $L^2(T_{x_0}X)$ given by 
\begin{multline}\label{e:F12}
\mathcal F_{1,2,x_0}
= 4 \left \langle R^{TX}_{x_0} \left(\frac{\partial}{\partial z_j}, \frac{\partial}{\partial z_k}\right) \frac{\partial}{\partial \overline{z}_j}, 
\frac{\partial}{\partial \overline{z}_k}\right \rangle\\
+ \left \langle (\nabla ^{X} \nabla ^{X}\mathcal J)_{(\mathcal R,\mathcal R)} \frac{\partial}{\partial z_j},
 \frac{\partial}{\partial \overline{z}_j} \right \rangle 
+\frac{i}{4} 
\tr_{|TX} \Big(\nabla ^{X} \nabla ^{X}(J\mathcal J)\Big)_{(\mathcal R, \mathcal R)} \\
+ \frac{1}{9} |(\nabla_{\mathcal R}^X \mathcal J) \mathcal R |^2
+ \frac{4}{9} \sum_{j,k=1}^n\left\langle(\nabla ^{X}_{\mathcal R} \mathcal J) \mathcal R, \frac{\partial} {\partial z_j}\right\rangle b^+_j \cL_{x_0}^{-1}b_k 
\left\langle(\nabla ^{X}_{\mathcal R}\mathcal J)\mathcal R,\frac{\partial}
{\partial\overline{z}_k}\right\rangle.
\end{multline}

The formulas \eqref{e:defJ12}, \eqref{e:F120} and \eqref{e:F12} allow one to compute the function $J_{1,2}$. For instance, as shown in \cite[(2.30)]{ma-ma08a}, if $J_0=J$, then
\[
J_{1,2}(x_0)=\frac{1}{24}|\nabla^XJ|^2_{x_0}. 
\]
Here if $\{e_j\}_{j=1,\ldots,2n}$ is a local orthonormal frame of $(TX,g^{TX})$, then 
\[
|\nabla^XJ|^2=\sum_{i,j=1}^{2n} |(\nabla^X_{e_i}J)e_j|^2. 
\]

\subsection{Computation of the spectrum of the model operator}
In this section, we compute the spectrum of the Toeplitz operator $\cT(Q)$ in $L^2(\CC^n)$ given by
\[
\cT(Q)=\cP Q : \ker \mathcal L_{x_0}\subset L^2(\CC^n) \to \ker \mathcal L_{x_0}\subset L^2(\CC^n), 
\]
where $Q=Q(z,\bar z)$ is a polynomial in $\CC^n$ and $\cP$ is the orthogonal projection in $L^2(\CC^n)\cong L^2(T_{x_0}X)$ to the kernel of $\mathcal L_{x_0}$ (see \eqref{e:defP}). We will keep notation of Section~\ref{s:char}. 

Recall that the Fock space $\cF_n$ is the space of holomorphic functions $F$ in $\CC^n$ such that $e^{-\frac{1}{2}|z|^2}F\in L^2(\CC^n)$. It is a closed subspace in $L^2(\CC^n; e^{-\frac{1}{2}|z|^2}dz)$ and the orthogonal projection $\Pi: L^2(\CC^n; e^{-\frac{1}{2}|z|^2}dz)\to \cF_n$ is given by
\[
\Pi F(z)=\frac{1}{\pi^n}\int_{\CC^n}\exp\left(-|z^\prime|^2+z\cdot \bar z^\prime\right) F(z^\prime)\,dz^\prime\,d\bar z^\prime.
\]

Consider the isometry $S : L^2(\CC^n; e^{-\frac{1}{2}|z|^2}dz)\to L^2(\CC^n)$ given, for $u\in L^2(\CC^n; e^{-\frac{1}{2}|z|^2}dz)$, by
\[
Su(z)=\frac{\prod_{j=1}^na_j }{2^n}e^{-\frac{1}{4}\sum_j a_j|z_j|^2} u\left(\phi(z)\right),
\]
where $\phi : \CC^n\to \CC^n$ is a linear isomorphism given by
\[
\phi(z)=\left(\frac{\sqrt{a_1}}{\sqrt{2}}z_1, \ldots, \frac{\sqrt{a_n}}{\sqrt{2}}z_n\right), \quad z\in \CC^n.
\]
It is easy to see that $S\Pi S^{-1}=\cP$. It follows that $S(\cF_n)=\ker \mathcal L_{x_0}$ and
\[
\cT(Q)= S\cT^0(Q\circ \phi^{-1})S^{-1}. 
\]
where $\mathcal T^0(Q)$ is a Toeplitz operator in the Fock space defined by
\[
\mathcal T^0(Q)=\Pi Q : \cF_n\to \cF_n.
\] 
In particular, the spectrum of $\cT(Q)$ in $\ker \mathcal L_{x_0}$ coincides with the spectrum of $\cT^0(Q\circ \phi^{-1})$ in $\cF_n$. 
 
To compute the spectrum of $\mathcal T^0(P)$ in $\cF_n$ for a positive definite quadratic form $P$, we will use the well-known relation between the Bargmann-Fock and the Schr\"odinger representations of the canonical commutation relations via the Bargmann transform. Recall that the Bargmann transform \cite{Bargmann} is an isometry $B: L^2(\RR^n)\to \cF_n$ defined by
\[
Bf(z)=\pi^{-n/4}\int_{\RR^n}\exp\left[-\left(\frac{1}{2}z\cdot z+\frac{1}{2}x\cdot x-\sqrt{2}z\cdot x\right)\right]f(x)dx, \quad z\in \CC^n.
\]

It is easy to check (see, for instance, \cite[(3.15b)]{Bargmann}) that, for the standard position and momentum operators 
\[
\hat q_k=x_k,\quad \hat p_k=\frac{1}{i}\frac{\partial}{\partial x_k},\quad k=1,\ldots,n, 
\]
we have 
\[
B\hat q_kB^{-1}=\frac{1}{\sqrt{2}}\left(z_k+\frac{\partial}{\partial z_k}\right), \quad B\hat p_kB^{-1}=\frac{1}{\sqrt{2}}i\left(z_k-\frac{\partial}{\partial z_k}\right). 
\]
Accordingly, for the creation and annihilation operators
\begin{equation}\label{e:a}
\hat a^*_k=\frac{1}{\sqrt{2}}(\hat q_k-i\hat p_k),\quad \hat a_k=\frac{1}{\sqrt{2}}(\hat q_k+i\hat p_k),
\end{equation}
we get 
\begin{equation}\label{e:BaB}
B\hat a^*_kB^{-1}=z_k, \quad B\hat a_kB^{-1}=\frac{\partial}{\partial z_k}.
\end{equation}

Observe, for any $F\in L^2(\CC^n; e^{-\frac{1}{2}|z|^2}dz)$, 
\begin{equation}\label{e:dPi}
\frac{\partial}{\partial z_k}\Pi F =\Pi \bar z_k F, \quad k=1,\ldots,n,
\end{equation}
and, for any $F\in \cF_n$,
\begin{equation}\label{e:zPi}
z_k\Pi F =\Pi (z_k F)\quad k=1,\ldots,n.
\end{equation}

Let $P$ be a polynomial in $\CC^n$. If we write $P$ as 
\[
P(\bar z,z)= \sum_{k,l\in \ZZ^n_+} A_{k;l} \bar z_1^{k_1}\ldots \bar z_n^{k_n} z_1^{l_1}\ldots z_n^{l_n},
\]
then, using \eqref{e:dPi} and \eqref{e:zPi}, one can easily see that, for any $F\in \cF_n$,
\begin{equation}\label{e:QPi}
P(\partial_z,z) F =\Pi (P(\bar z,z)F)=\cT^0(P)F,
\end{equation}
where $P(\partial_z,z)$ is the operator in $\cF_n$ given by
\[
P(\partial_z,z)=\sum_{k,l\in \ZZ^n_+} A_{k;l}\frac{\partial^{k_1}}{\partial z_1^{k_1}}\ldots \frac{\partial^{k_n}}{\partial z_n^{k_n}} z_1^{l_1}\ldots  z_n^{l_n}.
\]
By \eqref{e:BaB}, it follows that, under the Bargmann transform $B$, the operator $\cT^0(P)$ in $\cF_n$ corresponds to the operator $B^{-1}\cT^0(P)B$ in $L^2(\RR^n)$ given by
\[
B^{-1}\cT^0(P)B=\sum_{k,l\in \ZZ^n_+} A_{k;l} \hat a^{k_1}\ldots \hat a^{k_n} (\hat a_1^*)^{l_1}\ldots  (\hat a_n^*)^{l_n}.
\]
In the terminology of \cite{Berezin71}, the operator $B^{-1}\cT^0(P)B$ is the differential operator with polynomial coefficients in $\RR^n$ with anti-Wick symbol $P(\bar z,z)$. Here, by \eqref{e:a} and \eqref{e:BaB}, the phase space $\RR^{2n}\cong T^*\RR^n$ is identified to $\CC^n$ by the linear isomorphism 
\begin{equation}\label{e:zk-xk}
z_k=\frac{1}{\sqrt{2}}(x_k-i\xi_k), \quad k=1,\ldots,n.
\end{equation}
One can compute the Weyl symbol of this operator by a well-known formula (see, for instance, \cite{Berezin71}). The paper \cite{Berezin71} also contains some sufficient  self-adjointness conditions for the operator $B^{-1}P(\partial_z,z)B$. In particular, if $P$ is a positive definite quadratic form, then (see, for instance, \cite[Proposition 3.6]{deleporte1}), 
\[
B^{-1}\cT^0(P)B=Op_w(\tilde{P})+\frac{\operatorname{tr}(\tilde{P})}{2}, 
\]
where $\tilde P$ is a quadratic form on $\RR^{2n}$, corresponding to $P$ under the linear isomorphism \eqref{e:zk-xk}, $Op_w(\tilde{P})$ is the pseudodifferential operator in $\RR^n$ with Weyl symbol $\tilde{P}$.

Consider the case $n=1$ and $P$ is a positive definite quadratic form on $\CC\cong \RR^2$. Using rotations, we can assume that
\[
P(Z_1,Z_2)=\alpha Z_1^2+\beta Z_2^2. 
\]
The corresponding form $\tilde P$ on $\RR^2$ is given by
\[
\tilde P(x,\xi)=\frac{1}{2}(\alpha x^2+\beta\xi^2). 
\]
We have 
\[
Op_w(\tilde{P})= \frac{1}{2}(\alpha x^2+\beta D_x^2), \quad \operatorname{tr}(\tilde{Q})=\frac{\alpha+\beta}{2},
\]
and the eigenvalues of $B\mathcal T^0(P)B^{-1}$ are
\[
\lambda_j=\frac{\sqrt{\alpha\beta}}{2}(2j+1)+\frac{\alpha+\beta}{4}=\sqrt{\alpha\beta}j+\frac{(\sqrt{\alpha}+\sqrt{\beta})^2}{4}, \quad j=0,1,\ldots.
\]
So, for a general positive definite quadratic form $P$, the eigenvalues of $B\mathcal T^0(P)B^{-1}$ are
\[
\lambda_j=\sqrt{D}j+\frac{A^2}{4}, \quad j=0,1,\ldots,
\]
where $D=\det P$, $A=\operatorname{tr} P^{1/2}$.

Coming back to the initial problem and applying this result when 
\[
P(z)=(Q\circ \phi^{-1})(z)=\frac{2}{a_1}Q(z), \quad z\in \CC\cong \RR^2, 
\] 
where $Q$ is a positive definite quadratic form on $\CC\cong \RR^2$, we find the eigenvalues of $\mathcal T(Q)$:
\begin{equation}\label{e:lambda-T}
\lambda_j=\frac{2\sqrt{D}}{a_1}j+\frac{A^2}{2a_1}, \quad j=0,1,\ldots,
\end{equation}
where $D=\det Q$, $A=\operatorname{tr} Q^{1/2}$.

\subsection{Relation with the magnetic Schr\"odinger operator}
In this section, we recall the relationship of the Bochner Laplacian with the standard magnetic Schr\"odinger operator. 
 
Assume that the Hermitian line bundle $(L,h^L)$ on $X$ is trivial, that is, $L=X\times \CC$ and $|(x,z)|^2_{h^L}=|z|^2$ for $(x,z)\in X\times \CC$. Then the Hermitian connection $\nabla^L$ can be written as $\nabla^L=d-i \mathbf A$ with some real-valued 1-form $\mathbf A$ (a magnetic potential). Its curvature $R^L$ is given by 
\begin{equation}\label{e:RLB}
R^L=-i\mathbf B,
\end{equation}
where $\mathbf B=d\mathbf A$ is a real-valued 2-form (a magnetic field). For the form $\omega$ defined by \eqref{e:def-omega}, we have
\[
\omega=\frac{1}{2\pi}\mathbf B. 
\]
Thus, $\omega$ is symplectic if and only if $\mathbf B$ is non-degenerate (of full rank). 

The associated Bochner Laplacian $\Delta^{L^p}$ is related with the magnetic Laplacian
\[
H^{\hbar}=(i\hbar d+\mathbf A)^*(i\hbar d+\mathbf A), \quad \hbar>0
\]
by the formula
\[
\Delta^{L^p}=\hbar^{-2}H^{\hbar}, \quad \hbar=\frac{1}{p},\quad p\in \NN. 
\]
Let $B : TX\to TX$ be a skew-adjoint endomorphism such that 
\begin{equation}\label{e:BgB}
\mathbf B(u,v)=g(Bu,v), \quad u,v\in TX. 
\end{equation}
Then we have 
\begin{equation}\label{e:JJJ}
J_0=\frac{1}{2\pi}B,\quad J=B(B^*B)^{-1/2}, \quad
\mathcal J=-iB.
\end{equation}
Finally, the function $\tau$ and the constant $\mu_0$ are given by
 \begin{equation}\label{e:tau-mag}
 \tau(x)=\frac 12 \operatorname{Tr}(B^*B)^{1/2}= \operatorname{Tr}^+(B). 
 \end{equation}
 \[
 \mu_0=\inf_{u\in TX}\frac{|(B^*B)^{1/4}u|_g^2}{|u|_g^2}=\inf_{x\in X}\inf |B(x)|.
 \]

\subsection{The 2D case}\label{s:2Dcase}
In this subsection, we compute explicitly the upper bounds of Theorem~\ref{t:upper-estimates} for a two-dimensional magnetic Laplacian and show that, in this case, they are sharp and agree with the asymptotic expansions of \cite{HM01,Helffer-K11}. 

Thus, we assume that $X$ is a Riemann surface and $L$ is trivial. Then one can write
\begin{equation}\label{e:B2}
\mathbf B =b(x)dv_X,
\end{equation}
where $dv_X=\sqrt{g}dx_1\wedge dx_2$ is the Riemannian volume form. If $\mathbf B$ is of full rank, then $b(x)\neq 0$ for all $x\in X$. We will assume $b(x)>0$ for any $x\in X$ and there exists a unique $x_0$ such that 
\[
b(x_0)=b_0:=\min_{x\in X}b(x),
\]

First, we observe that 
\begin{equation}\label{e:tau=b}
\tau(x)=b(x), \quad x\in X. 
\end{equation}

Indeed, choose an orthonormal basis $\{w_1\}$ of $T^{(1,0)}_{x}X$, consisting of eigenvectors of $\mathcal J_{x}$:
\[
\mathcal J_{x}w_1=a_1w_1, 
\]
with some $a_1>0$. Then $e_{1}=\frac{1}{\sqrt{2}}(w_1+\bar w_1)$ and $e_{2}=\frac{i}{\sqrt{2}}(w_1-\bar w_1)$ form an orthonormal basis of $T_{x}X$ such that
\begin{equation}\label{e:mJ-two}
\mathcal J_{x}e_1=-ia_1e_2, \quad \mathcal J_{x}e_2=ia_1e_1.
\end{equation}
By \eqref{e:JJJ}, we get
\[
B_{x}e_1=a_1e_2, \quad B_{x}e_2=-a_1e_1.
\]
and
\begin{equation}\label{e:a1-two}
b(x)=\mathbf B_x(e_1,e_2)=g(B_xe_1,e_2)=a_1. 
\end{equation}
Finally, by \eqref{e:tau-mag}, we complete the proof of \eqref{e:tau=b}:
\[
\tau(x)=\frac 12 \operatorname{Tr}(B_x^*B_x)^{1/2}=b(x).
 \]

Next, we show that the operator $\mathcal F_{1,2,x_0}$ given by \eqref{e:F12} vanishes. 

\begin{lem}\label{l:F12-vanish}
If $x_0$ is a minimum point of $b$, then $\mathcal F_{1,2,x_0}=0$.
\end{lem}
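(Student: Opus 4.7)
The plan is to exploit two special features of dimension two. First, since $L$ is trivial and $\mathbf B = b\,dv_X$ with $b > 0$, the defining relation $\mathbf B(u,v) = g(Bu,v)$ combined with formulas \eqref{e:JJJ} gives $B = b J$ (with $J$ the canonical complex structure of the oriented Riemannian surface and $J_0 = (b/2\pi)J$), so that $\mathcal J = -iB = -ib\,J$ and $J\mathcal J = -ib\,J^2 = ib\,I_{TX}$. Second, on an oriented Riemannian surface the K\"ahler form $g(J\cdot,\cdot)$ equals the Riemannian volume form $dv_X$, which is parallel under Levi-Civita; combined with $\nabla^{TX} g = 0$ this forces $\nabla^X J = 0$.

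Taken together, these give
\[
\nabla^X \mathcal J = -i\, db\otimes J, \qquad \nabla^X(J\mathcal J) = i\, db\otimes I_{TX},
\]
and, differentiating once more and using $\nabla^X J = 0$,
\[
\nabla^X\nabla^X \mathcal J = -i\,(\nabla^X db)\otimes J, \qquad \nabla^X\nabla^X(J\mathcal J)= i\,(\nabla^X db)\otimes I_{TX}.
\]
At the critical point $x_0$ of $b$ one has $(db)(x_0)=0$, so $(\nabla^X \mathcal J)(x_0) = 0$, while $(\nabla^X\nabla^X \mathcal J)(x_0)$ and $(\nabla^X\nabla^X(J\mathcal J))(x_0)$ reduce to $-i\,\operatorname{Hess}(b)_{x_0}\otimes J$ and $i\,\operatorname{Hess}(b)_{x_0}\otimes I_{TX}$ respectively.

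Next I would inspect the five summands on the right-hand side of \eqref{e:F12}. The first (curvature) term vanishes because in dimension two (only $j=k=1$) it involves $R^{TX}_{x_0}(\partial_{z_1},\partial_{z_1})$, which is zero by antisymmetry. The fourth and fifth terms both carry a factor $(\nabla^X_{\mathcal R}\mathcal J)\mathcal R$, which, viewed as a function of $Z\in T_{x_0}X$, is quadratic with coefficient $(\nabla^X \mathcal J)_{x_0}=0$, hence is identically zero. For the second and third (quadratic-in-$Z$) terms one uses $J\partial_{z_1}=i\partial_{z_1}$ (as $\partial_{z_1}$ lies in $T^{(1,0)}X$) together with the $\CC$-bilinear pairing $\langle\partial_{z_1},\partial_{\bar z_1}\rangle=\tfrac12$: the second term becomes $\tfrac12\,\operatorname{Hess}(b)(\mathcal R,\mathcal R)$, while $\tr_{TX} I_{TX}=2$ makes the third term equal to $\frac{i}{4}\cdot 2i\,\operatorname{Hess}(b)(\mathcal R,\mathcal R) = -\tfrac12\,\operatorname{Hess}(b)(\mathcal R,\mathcal R)$, and the two cancel exactly.

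The only delicate step is this final cancellation, which hinges on the numerical normalizations: the factor $\tfrac12$ from $\partial_{z_1}|_{x_0}=\tfrac{1}{\sqrt2}w_1$, the trace identity $\tr_{TX}I_{TX}=2$ that is special to $n=1$, and the compensating factors of $i$ coming from $\mathcal J = -ibJ$ and from the prefactor $\tfrac{i}{4}$ in \eqref{e:F12}. Once this cancellation is verified, each of the five summands contributes zero as an operator on $L^2(T_{x_0}X)$, and hence $\mathcal F_{1,2,x_0}=0$.
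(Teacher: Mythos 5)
Your proof is correct and follows a genuinely different, more conceptual route than the paper. The paper's proof keeps the expression $\mathcal F_{1,2,x_0}$ in Ma--Marinescu's form and then invokes their Taylor-expansion identities (formula (2.6), which re-expresses $\langle(\nabla^X\nabla^X\mathcal J)_{(\mathcal R,\mathcal R)}\partial_{z_1},\partial_{\bar z_1}\rangle$ through derivatives of $R^L(e_1,e_2)$ plus $R^{TX}$-terms, formula (1.31) for the expansion of $\sqrt{\det g}$, and formula (1.94) for $\tr_{|TX}(\nabla^X\nabla^X(J\mathcal J))$ via $\tau$), so it has to carry extra $R^{TX}$-contributions and watch them cancel. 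You instead use the two structural facts that trivialize the situation in dimension two: (a) on an oriented Riemannian surface the almost complex structure $J$ of \eqref{e:defJ} is the canonical complex structure and is parallel, $\nabla^X J=0$; and (b) $\mathcal J=-ibJ$ and $J\mathcal J=ib\,I_{TX}$, so that $\nabla^X\mathcal J=-i\,db\otimes J$, $\nabla^X\nabla^X\mathcal J=-i\,\operatorname{Hess}(b)\otimes J$ and $\nabla^X\nabla^X(J\mathcal J)=i\,\operatorname{Hess}(b)\otimes I_{TX}$. At a critical point of $b$ the first derivative vanishes, which kills the last two summands (they carry a factor $(\nabla^X_{\mathcal R}\mathcal J)\mathcal R$), the curvature term dies by antisymmetry of $R^{TX}$ as in the paper, and the remaining two quadratic terms give $+\tfrac12\operatorname{Hess}(b)(\mathcal R,\mathcal R)$ and $-\tfrac12\operatorname{Hess}(b)(\mathcal R,\mathcal R)$ via $\langle\partial_{z_1},\partial_{\bar z_1}\rangle=\tfrac12$, $J\partial_{z_1}=i\partial_{z_1}$, $\tr_{TX}I_{TX}=2$. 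I have checked those normalization constants and they are right; your cancellation agrees with the paper's intermediate identities \eqref{e:413-1}--\eqref{e:413-2}. The one caveat is that your argument is tied to the surface-specific parallelism of $J$ and so does not indicate how the analogous term behaves in higher dimension, whereas the paper's computation (while more laborious) stays within a formalism that extends; for the purpose of Lemma~\ref{l:F12-vanish} both are fully adequate, and yours is shorter and makes the structural reason for the vanishing transparent.
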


\begin{proof}
First, observe that, in the two-dimensional case, the formula \eqref{e:F12} takes the form
\begin{multline}\label{e:F12-two}
\mathcal F_{1,2,x_0}
= 4 \left \langle R^{TX}_{x_0} \left(\frac{\partial}{\partial z_1}, \frac{\partial}{\partial z_1}\right) \frac{\partial}{\partial \overline{z}_1}, 
\frac{\partial}{\partial \overline{z}_1}\right \rangle\\
+ \left \langle (\nabla ^{X} \nabla ^{X}\mathcal J)_{(\mathcal R,\mathcal R)} \frac{\partial}{\partial z_1},
 \frac{\partial}{\partial \overline{z}_1} \right \rangle 
+\frac{\sqrt{-1}}{4}  \tr_{|TX} \Big(\nabla ^{X} \nabla ^{X}(J\mathcal J)\Big)_{(\mathcal R, \mathcal R)} \\
+ \frac{1}{9} |(\nabla_{\mathcal R}^X \mathcal J) \mathcal R |^2
+ \frac{4}{9} \left\langle(\nabla ^{X}_{\mathcal R} \mathcal J) \mathcal R, \frac{\partial} {\partial z_1}\right\rangle b^+_1 \cL_{x_0}^{-1}b_1 
\left\langle(\nabla ^{X}_{\mathcal R}\mathcal J)\mathcal R,\frac{\partial}
{\partial\overline{z}_1}\right\rangle.
\end{multline}

Recall that we fix an oriented orthonormal basis $\{e_1,e_2\}$ of $T_{x_0}X$ and identify $B^{T_{x_0}X}(0,a_X)$ with $B^{X}(x_0,a_X)$ by the exponential map $\operatorname{exp}^X : T_{x_0}X\to X$, getting a local frame $\{e_1,e_2\}$ on $B^{X}(x_0,a_X)$. We also assume that the identities \eqref{e:mJ-two} hold at $x_0$. 

First, observe that, since the curvature $R^{TX}_{x_0}(u,v)$ is skew-symmetric, the first term in \eqref{e:F12-two} vanishes:
\[
\left \langle R^{TX}_{x_0} \left(\frac{\partial}{\partial z_1}, \frac{\partial}{\partial z_1}\right) \frac{\partial}{\partial \overline{z}_1}, 
\frac{\partial}{\partial \overline{z}_1}\right \rangle=0. 
\]

By \cite[(2.6)]{ma-ma08a}, since $(\nabla ^{X} \nabla ^{X}\mathcal J)_{(\mathcal R,\mathcal R)}$ is skew-adjoint, for the first term in the second line of \eqref{e:F12-two}, we get
\begin{multline}\label{e:412}
\left \langle (\nabla ^{X} \nabla ^{X}\mathcal J)_{(\mathcal R,\mathcal R)} \frac{\partial}{\partial z_1},
 \frac{\partial}{\partial \overline{z}_1} \right \rangle\\ =\frac{i}{2} \left \langle (\nabla ^{X} \nabla ^{X}\mathcal J)_{(\mathcal R,\mathcal R)} e_1, e_2 \right \rangle 
=i\sum_{|\alpha|=2}\partial^\alpha (R^L(e_1,e_2))_{x_0}\frac{Z^\alpha}{\alpha!}\\ +\frac {i}{6} [\langle R^{TX}(\mathcal R, e_1)\mathcal R,\mathcal Je_2\rangle - \langle R^{TX}(\mathcal R, e_2)\mathcal R,\mathcal Je_1\rangle]. 
\end{multline} 
By \eqref{e:RLB} and \eqref{e:B2}, we have
\[
R^L(e_1,e_2)=-i\mathbf B(e_1,e_2)=-ib(Z)\sqrt{\det g(Z)},
\]
where $g_{jk}(Z)=\langle e_j(Z),e_k(Z)\rangle$, $j,k=1,2$. 

Since $(\partial^\alpha b)_{x_0}=0$ for $|\alpha|=1$ and $g_{ij}(0)=\delta_{ij}$, for the first term in \eqref{e:412}, we get
\begin{multline*}
i\sum_{|\alpha|=2}\partial^\alpha (R^L(e_1,e_2))_{x_0}\frac{Z^\alpha}{\alpha!}=\sum_{|\alpha|=2}\partial^\alpha \left(b\sqrt{\det g}\right)_{x_0}\frac{Z^\alpha}{\alpha!}\\ =\sum_{|\alpha|=2}(\partial^\alpha b)_{x_0}\frac{Z^\alpha}{\alpha!}+b_0\sum_{|\alpha|=2}\partial^\alpha \left(\sqrt{\det g}\right)_{x_0}\frac{Z^\alpha}{\alpha!}.
\end{multline*}
By \cite[(1.31)]{ma-ma08a}, we have
\[
\sum_{|\alpha|=2}\partial^\alpha \left(\sqrt{\det g}\right)_{x_0}\frac{Z^\alpha}{\alpha!}=\frac {1}{6} [\langle R^{TX}(\mathcal R, e_1)\mathcal R,e_1\rangle+\langle R^{TX}(\mathcal R, e_2)\mathcal R, e_2\rangle].
\]
So, for the first term in \eqref{e:412}, we get
\begin{multline}\label{e:412-1}
i\sum_{|\alpha|=2}\partial^\alpha (R^L(e_1,e_2))_{x_0}\frac{Z^\alpha}{\alpha!}\\ =\sum_{|\alpha|=2}(\partial^\alpha b)_{x_0}\frac{Z^\alpha}{\alpha!}+\frac {1}{6}b_0[\langle R^{TX}(\mathcal R, e_1)\mathcal R,e_1\rangle+\langle R^{TX}(\mathcal R, e_2)\mathcal R, e_2\rangle].
\end{multline}
For the second term in \eqref{e:412}, by \eqref{e:mJ-two} and \eqref{e:a1-two}, we get 
\begin{multline}\label{e:412-2}
\frac {i}{6} [\langle R^{TX}(\mathcal R, e_1)\mathcal R,\mathcal Je_2\rangle - \langle R^{TX}(\mathcal R, e_2)\mathcal R,\mathcal Je_1\rangle]\\ =-\frac {1}{6}b_0 [\langle R^{TX}(\mathcal R, e_1)\mathcal R, e_1\rangle +\langle R^{TX}(\mathcal R, e_2)\mathcal R, e_2\rangle]
\end{multline}
Combining \eqref{e:412-1} and \eqref{e:412-2}, for the first term in the second line of \eqref{e:F12-two}, we get
\begin{equation}\label{e:413-1}
\left \langle (\nabla ^{X} \nabla ^{X}\mathcal J)_{(\mathcal R,\mathcal R)} \frac{\partial}{\partial z_1},
 \frac{\partial}{\partial \overline{z}_1} \right \rangle = \sum_{|\alpha|=2}(\partial^\alpha b)_{x_0}\frac{Z^\alpha}{\alpha!}. 
\end{equation}

By \eqref{e:def-tau} (see also \cite[(1.94)]{ma-ma08a}), for the second term in the second line of \eqref{e:F12-two}, we have
\begin{equation}\label{e:413-2}
\frac{i}{4}  \tr_{|TX} \Big(\nabla ^{X} \nabla ^{X}(J\mathcal J)\Big)_{(\mathcal R, \mathcal R)}=- \sum_{|\alpha|=2}(\partial^\alpha b)_{x_0}\frac{Z^\alpha}{\alpha!}.
\end{equation}
Combining \eqref{e:413-1} and \eqref{e:413-2}, we infer that the second line in \eqref{e:F12-two} vanishes:
\[
\left \langle (\nabla ^{X} \nabla ^{X}\mathcal J)_{(\mathcal R,\mathcal R)} \frac{\partial}{\partial z_1},
 \frac{\partial}{\partial \overline{z}_1} \right \rangle 
+\frac{\sqrt{-1}}{4}  \tr_{|TX} \Big(\nabla ^{X} \nabla ^{X}(J\mathcal J)\Big)_{(\mathcal R, \mathcal R)}=0.
\]
Finally, the third line in \eqref{e:F12-two} vanishes, since for any $U\in T_{x_0}X$,
\[
\nabla_U^X \mathcal J=0.
\]
Indeed, observe that $\nabla_U^X \mathcal J$ is skew-symmetric, and 
\begin{equation}\label{e:nablaUX}
\langle (\nabla_U^X \mathcal J)e_1, e_2\rangle = U\langle \mathcal Je_1, e_2\rangle-\langle \mathcal J \nabla_U^Xe_1, e_2\rangle-\langle \mathcal J e_1, \nabla_U^Xe_2\rangle. 
\end{equation}
Using \eqref{e:JJJ}, \eqref{e:BgB} and \eqref{e:B2}, we show that the first term in \eqref{e:nablaUX} vanishes:
\[
U\langle \mathcal Je_1, e_2\rangle_{x_0}=-iU[\langle Be_1, e_2\rangle]_{x_0}=-iU[\mathbf B(e_1,e_2)]_{x_0}=-iU[b\sqrt{\det g}]_{x_0}=0.
\]
The last equality holds, because $Ub(x_0)=0$ since $x_0$ is a minimum, and $U[\sqrt{\det g}](x_0)=0$ by properties of normal coordinates. 

By properties of normal coordinates, we have 
\[
(\nabla_U^Xe_1)_{x_0}=(\nabla_U^Xe_2)_{x_0}=0.
\]
Therefore, the last two terms in \eqref{e:nablaUX} vanish. This completes the proof of the lemma.
\end{proof}

Thus, $J_{1,2,x_0}(Z,Z^\prime)=0$ and the model Toeplitz operator $\mathcal T_{x_0}$ in $L^2(T_{x_0}X)$ has the form
\[
\mathcal T_{x_0}=\mathcal P_{x_0}q_{x_0}(Z)\mathcal P_{x_0},
\]
where 
\[
q_{x_0}(Z)=\left(\frac12 {\rm Hess}\,b(x_0)Z,Z\right), \quad Z\in T_{x_0}X.
\]
The spectrum of $\mathcal T_{x_0}$ is computed in \eqref{e:lambda-T}. Therefore, if we denote
\[
a={\rm Tr}\left(\frac12 {\rm Hess}\,b(x_0)\right)^{1/2}, \quad
d=\det \left(\frac12 {\rm Hess}\,b(x_0)\right),
\]
by Theorem~\ref{t:upper-estimates}, we obtain the estimate
\[
\lambda_j(\Delta^{L^p})\leq pb_0
+\left[\frac{2d^{1/2}}{b_0}j+ \frac{a^2}{2b_0}\right]+C_jp^{-1/2}
\]
with some $C_j>0$, which is sharp and agrees with the asymptotic expansions of \cite{HM01,Helffer-K11}.


\begin{thebibliography}{00}
\bibitem{Ali}
Ali, S. T., Englis, M.: Quantization methods: a guide for physicists and analysts. Rev. Math. Phys. 17, 391--490 (2005)

\bibitem{Bargmann} Bargmann, V.: On a Hilbert space of analytic functions and an associated integral transform. Comm. Pure Appl. Math. 14, 187--214 (1961)

\bibitem{Berezin71} Berezin, F.A.: Wick and anti-Wick symbols of operators.  Mat. Sb. (N.S.) 86(128), 578--610 (1971)

\bibitem{Berezin} Berezin, F.A.: General concept of quantization. Commun. Math. Phys. 40, 153–174 (1975)

\bibitem{BMS94}
Bordemann, M., Meinrenken, E., Schlichenmaier, M.: Toeplitz quantization of K\"ahler manifolds and ${\rm gl}(N)$, $N\to\infty$ limits. Comm. Math. Phys. 165, 281--296 (1994)

\bibitem{BPU} 
Borthwick, D., Paul, T., Uribe, A.: Semiclassical spectral estimates for Toeplitz operators. Ann. Inst. Fourier (Grenoble), 48, 1189--1229 (1998)

\bibitem{B-Uribe} Borthwick, D., Uribe, A.:  Almost complex structures and geometric  quantization. Math. Res. Lett. 3, 845--861 (1996)

\bibitem{BG}
Boutet de Monvel, L., Guillemin, V.: The spectral theory of Toeplitz operators. Ann. Math. Studies, Nr. 99, Princeton University Press, Princeton, NJ (1981)

\bibitem{charles03} 
Charles, L.: Berezin-Toeplitz operators, a semi-classical approach. Comm. Math. Phys., 239, 1--28 (2003)

\bibitem{charles03a} 
Charles, L.: Quasimodes and Bohr-Sommerfeld conditions for the Toeplitz operators. Comm. Partial Differential Equations 28, 1527--1566 (2003)

\bibitem{charles06} 
Charles, L.: Symbolic calculus for Toeplitz operators with half-forms 
Journal of Symplectic Geometry 4, 171--198 (2006)

\bibitem{charles}
Charles, L.: Quantization of compact symplectic manifolds. J. Geom. Anal. 26, 2664--2710 (2016)

\bibitem{dai-liu-ma}
Dai, X., Liu, K., Ma, X.: On the asymptotic expansion of Bergman kernel. J. Differential Geom. 72, 1--41 (2006) 

\bibitem{deleporte1}
Deleporte, A: Low-energy spectrum of Toeplitz operators: the case of wells. J. Spectr. Theory 9, 79--125 (2019)

\bibitem{deleporte2}
Deleporte, A.: Low-energy spectrum of Toeplitz operators with a miniwell, Preprint arXiv:1610.05902 (2016)

\bibitem{Englis}
Engli\v s, M.: An excursion into Berezin-Toeplitz quantization and related topics. In: Quantization, PDEs, and geometry, Oper. Theory Adv. Appl., 251, Adv. Partial Differ. Equ. (Basel), pp. 69--115, Birkh\"auser/Springer, Cham (2016)

\bibitem{Gu-Uribe} 
Guillemin, V., Uribe A.:  
The Laplace operator on the $n$th tensor
power of a line bundle: eigenvalues which are uniformly bounded in $n$. 
Asymptotic Anal. 1, 105--113 (1988)

\bibitem{Helffer-K11} 
Helffer, B., Kordyukov, Yu. A.:
Semiclassical spectral asymptotics for a two-dimensional
magnetic Schr\"odinger operator: The case of discrete wells. In:
Spectral Theory and Geometric Analysis; Contemp. Math. 535, pp.
55--78; AMS, Providence, RI (2011)

\bibitem{Helffer-K12} Helffer, B., Kordyukov, Yu. A.: Semiclassical spectral asymptotics for a two-dimensional magnetic Schr\"odinger operator II: The case of degenerate wells. Comm. Partial Differential Equations 37, 1057--1095 (2012)

\bibitem{Helffer-K14}
Helffer, B., Kordyukov, Yu. A.: Semiclassical spectral asymptotics for a magnetic Schr\"odinger operator with non-vanishing magnetic field. In: Geometric Methods in Physics (Bialowieza, Poland, 2013), pp. 259--278, Birkh\"auser, Basel (2014) 

\bibitem{Helffer-K15} Helffer, B., Kordyukov, Yu. A.: Accurate semiclassical spectral asymptotics for a two-dimensional
magnetic Schr\"odinger operator. Ann. Henri Poincar\'e 16, 1651--1688 (2015)

\bibitem{HM01} Helffer, B., Morame, A.:
Magnetic bottles in connection with superconductivity. J.
Funct. Anal.  185, 604--680 (2001) 

\bibitem{HelRo} Helffer, B., Robert, D.: Puits de potentiel g\'en\'eralis\'es et asymptotique semi-classique.  Ann. Inst. H. Poincar\'e Phys. Th\'eor. 41, 291--331 (1984)

\bibitem{HelSj1} Helffer, B., Sj\"ostrand, J.: Multiple wells in the semiclassical limit. I. Comm. Partial Differential Equations 9, 337--408 (1984) 

\bibitem{HM}
Hsiao, C.-Y., Marinescu G.: Berezin-Toeplitz quantization for lower energy forms, Comm.\ Partial Differential Equations. 42, 895--942 (2017)

\bibitem{ioos-lu-ma-ma}
Ioos, L., Lu, W., Ma, X., Marinescu, G.: Berezin-Toeplitz quantization for eigenstates of the Bochner-Laplacian on symplectic manifolds, J Geom Anal (2018). https://doi.org/10.1007/s12220-017-9977-y.

\bibitem{Kor91}
Kordyukov, Yu. A.: $L^p$-theory of elliptic differential operators on 
manifolds of bounded geometry. Acta Appl. Math. 23, 223--260 (1991)

\bibitem{bergman} Kordyukov, Yu. A.: On asymptotic expansions of generalized Bergman kernels on symplectic manifolds. (Russian) Algebra i Analiz 30, no. 2, 163--187 (2018); translation in St. Petersburg Math. J. 30, no. 2, 267--283 (2019)

\bibitem{Ko-ma-ma} Kordyukov, Yu. A., Ma, X., Marinescu, G.: 
Generalized Bergman kernels on symplectic manifolds of bounded geometry. Comm. Partial Differential Equations 44, 1037--1071 (2019)

\bibitem{LeFloch1}
Le Floch, Y.: Singular Bohr-Sommerfeld conditions for 1D Toeplitz operators: elliptic case. Communications in Partial Differential Equations 39, 213--243 (2014)

\bibitem{LeFloch2}
Le Floch, Y.: Singular Bohr-Sommerfeld conditions for 1D Toeplitz operators: hyperbolic case. Anal. PDE 7, 1595--1637 (2014)

\bibitem{LMM}
Lu, W., Ma, X., Marinescu, G.: Donaldson's $Q$-operators for symplectic manifolds. Sci. China Math. 60, 1047--1056 (2017)

\bibitem{ma:ICMtalk} Ma, X.: 
Geometric quantization on {K}\"ahler and symplectic manifolds. In: Proceedings of the International Congress
of Mathematicians. Volume II, pp. 785--810, Hindustan Book Agency, 
New Delhi (2010)

\bibitem{ma-ma02}
Ma, X., Marinescu, G.: The ${\rm Spin}^c$ Dirac operator on high tensor powers of a line bundle. Math. Z. 240, 651--664 (2002) 

\bibitem{ma-ma:book}
Ma, X., Marinescu, G.: Holomorphic Morse inequalities and Bergman kernels. Progress in Mathematics, 254. Birkh\"auser Verlag, Basel (2007) 

\bibitem{ma-ma08a} 
Ma, X., Marinescu, G.: Generalized Bergman kernels on symplectic manifolds. Adv. Math. 217, 1756--1815 (2008)

\bibitem{ma-ma08}
Ma, X., Marinescu, G.: Toeplitz operators on symplectic manifolds. J. Geom. Anal. 18,  565--611 (2008) 
 

\bibitem{RV15} 
Raymond, N., V\~u Ng\d{o}c, S.: Geometry and spectrum in 2D magnetic wells. Ann. Inst. Fourier 65, 137--169 (2015)

\bibitem{Raymond-book}
Raymond, N.: Bound states of the magnetic Schr\"odinger operator. EMS Tracts in Mathematics, 27. European Mathematical Society (EMS), Z\"urich (2017) 

\bibitem{RSIV} 
Reed, M., Simon, B.: Methods of modern mathematical physics. IV. Analysis of operators. Academic Press [Harcourt Brace Jovanovich, Publishers], New York-London (1978)

\bibitem{Schlich10}
Schlichenmaier, M.: Berezin-Toeplitz quantization for compact K\"ahler manifolds. A review of results. Adv. Math. Phys., Art. ID 927280, 38 pp. (2010) 

\bibitem{Simon}
Simon, B.: Semiclassical analysis of low lying eigenvalues. I. Nondegenerate minima: asymptotic expansions. Ann. Inst. H. Poincar\'e Sect. A (N.S.) 38, 295--308 (1983)

\bibitem{Zelditch97} 
Zelditch, S.: 
Index and dynamics of quantized contact transformations. Ann. Inst. Fourier
(Grenoble) 47, 305--363 (1997)
\end{thebibliography}
\end{document}